\newcommand{\clb}{\color{blue}}
\newcommand{\cX}{\mathcal{X}}
\newcommand{\nada}[1]{}
\def\@maketitle{\newpage
    \null
    \vskip .8truein
    \begin{center}%
     {\bf \@title \par}%
     \vskip 1.5em
     {\small
      \lineskip .5em
      \begin{tabular}[t]{c}\@author
      \end{tabular}\par}%
    \end{center}%
    \par
    \vskip .4truein}
\let\G=\Gamma  
\let\nn=\nonumber
\newcommand{\re}{\mathbb{R}}
\def\R{\mathbb{R}}
\def\Q{\mathbb{Q}}
\def\dP{\mathbb{P}}
\def\G{\mathbb{G}}
\def\N{\mathbb{N}}
\def\dH{\mathbb{H}}
\def\Z{\mathbb{Z}}
\def\cF{\mathcal{F}}
\def\cH{\mathcal{H}}
\def\cX{\mathcal{X}}
\def\dE{\mathrm{e}}
\def\cA{\mathcal{A}}
\def\cB{\mathcal{B}}
\def\Del{\delta_{1/\varepsilon}}
\let\e=\varepsilon
\newtheorem{theorem}{Theorem}[section]
\newtheorem{lemma}{Lemma}[section]
\newtheorem{proposition}{Proposition}[section]
\newtheorem{definition}{Definition}[section]
\newtheorem{defi}{Definition}[section]
\newtheorem{corollary}{Corollary}[section]
\newtheorem{remark}{Remark}[section]
\newtheorem{rem}{Remark}[section]
\newtheorem{example}{Example}[section]
\newtheorem{ex}{Example}[section]
\newcommand{\norma}[1]{\left \|#1\right \|}
\def\proof{\list{}{\setlength{\leftmargin}{0pt}
                      \parskip=0pt\parsep=0pt\listparindent=2em
                      \itemindent=0pt}\item[]\futurelet\testchar\@maybe}
\def\@maybe{\ifx[\testchar \let\next\@Opt
          \else \let\next\@NoOpt \fi \next}
\def\@Opt[#1]{{\it Proof of #1.\ }}\def\@NoOpt{{\it Proof.\ }}
\begin{document}

\title{\Large \bf Stochastic homogenization for functionals with anisotropic rescaling and non-coercive Hamilton-Jacobi equations.
}
\author{Nicolas Dirr\thanks{Cardiff School of Mathematics, Cardiff University, Cardiff, UK, e-mail: dirrnp@cardiff.ac.uk.} \and
Federica Dragoni
\thanks{Cardiff School of Mathematics, Cardiff University, Cardiff, UK, e-mail: DragoniF@cardiff.ac.uk.} \and
Paola Mannucci 
\thanks{Dipartimento di Matematica ``Tullio Levi-Civita'' ,  Universit\`a di Padova, Padova, Italy, e-mail: mannucci@math.unipd.it} \and
Claudio Marchi
\thanks{Dipartimento di Ingegneria dell'Informazione,  Universit\`a di Padova, Padova, Italy, e-mail: claudio.marchi@unipd.it}
}
\maketitle
\begin{abstract}
\noindent We study the stochastic homogenization for a Cauchy problem for a first-order Hamilton-Jacobi equation whose operator is not coercive w.r.t. the gradient variable.
We look at Hamiltonians like $H(x,\sigma(x)p,\omega)$ 
where 
$\sigma(x)$ is a matrix associated to  a Carnot group.
The rescaling considered  is consistent with the underlying Carnot group structure, thus anisotropic.
 We will prove that under suitable  assumptions for the Hamiltonian, the solutions of the $\varepsilon$-problem converge 
 to a deterministic function which can be characterized as the unique (viscosity) solution of a suitable deterministic Hamilton-Jacobi problem.
\end{abstract}
%
%
%
%
%
\noindent {\bf Keywords}:   Stochastic homogenization, non-coercive Hamilton-Jacobi equations, Carnot groups, H\"ormander condition, Heisenberg group, anisotropic functionals.
%


\section{Introduction} 

Homogenization problems have been studied for many years for both their intrinsic mathematical interest and the many applications in different sciences (e.g. the study of heterogeneous media). In particular stochastic homogenization arises whenever
at the microscopic level the system depends on some random variable but at the macroscopic level one can expect a deterministic behaviour.\\
 In this paper, we study asymptotics of a special class of {\em degenerate} (i.e. non-coercive) first-order Hamilton-Jacobi equations with random coefficients taking the form
\begin{equation}
\label{equation1}
\left\{
\begin{aligned}
&
u_t+H\left(x, \sigma(x)\nabla u,\omega\right)=0,\ t>0,\ x\in\re^N,\  \omega \in \Omega,\\
& u(0,x,\omega)=g(x),\ x\in \re^N, \;\omega \in \Omega,
\end{aligned}
\right.
\end{equation}
where $(\Omega, {\mathcal F},{\mathbb P})$ is a given probability space, and  $\sigma:\R^N\to \R^{m\times N}$  with $m\le N$.  Even though  $H(x,q)$ is coercive and convex in the variable $q=\sigma(x)p\in \R^m,$ the map
$p\mapsto H\left(x, \sigma(x)p,\omega\right)$ is in general not coercive because $\sigma(x)$ may have a nontrivial kernel.
 The illustrating example is the Heisenberg group, which is topologically $\R^3$ but with a  different algebraic structure (see Section~$2$, e.g. \eqref{MatrixHeisenberg}).\\
Equations as \eqref{equation1}  can be understood in the framework of Carnot groups, i.e.  non-commutative stratified nilpotent Lie groups (see Section 2 for more details). 
In particular these groups satisfy the H\"ormander condition:
they are endowed with a family of vector fields that, together with all their associated commutators, 
span the whole tangent space at any point of the original manifold.

For the associated homogenization problem, the Carnot group structure suggests a natural anisotropic rescaling of $\R^N,$ denoted by $\delta_{1/\varepsilon}(x)$ for $x\in \R^N.$ \\
Then the homogenization problem can be formulated as follows:\\
under some assumptions made precise later (see Section 3), find the equation solved by the (locally uniform) limit of $u^\varepsilon(t,x,\omega)$ where $u^{\varepsilon}$ are viscosity solutions of
\begin{equation}
\label{equation2}
\left\{
\begin{aligned}
&
u^{\varepsilon}_t+H\left(\delta_{1/\varepsilon}(x), \sigma(x)\nabla u^{\varepsilon},\omega\right)=0,\ t>0,\ x\in\re^N,\  \omega \in \Omega,\\
& u^{\varepsilon}(0,x,\omega)=g(x),\ x\in \re^N, \;\omega \in \Omega.
\end{aligned}
\right.
\end{equation}
In other words, the aim is to identify $\overline H:\R^m\to \R$ such that the  viscosity solutions of \eqref{equation2} converge, locally uniformly in $t$ and $x$ and almost surely in $\omega$, to a deterministic  function $u(t,x)$ which can be characterized as the unique viscosity solution of a problem of the form 
 \begin{equation}
 \label{LimitProblem}
 \left\{
 \begin{aligned}
 &u_t+\overline{H}\left(\sigma(x)\nabla u\right)=0,\ t>0,\ x\in\re^N,\\
 &u(0,x)=g(x),\ x\in\R^N.
 \end{aligned}
 \right.
 \end{equation}
In the case of the Heisenberg group, the anisotropic rescaling is
 $\delta_{1/\varepsilon}(x_1,x_2,x_3)=(\varepsilon^{-1}x_1,\varepsilon^{-1}x_2,\varepsilon^{-2}x_3).$ This is consistent with the geometric structure of the Heisenberg group, but the anisotropy can be understood heuristically in another way: at each point, some directions are ``forbidden'', i.e. paths of the associated control problem can move only on a two-dimensional subspace. By varying their direction often (i.e. by the use of non-trivial commutators from the H\"ormander condition) they are able to reach any given point but the cost for ``zig-zagging'' to get in the forbidden direction is higher, so typically they move slower in these directions, which makes  a faster rescaling necessary.\\
Note that, in~\eqref{equation2}, $\sigma(x)$ is not rescaled so this is in principle  a problem with a  fast and a slow variable, but the equation is degenerate if the   slow variables are frozen. Obviously, general non-coercive equations have no homogenization, so considering a cell problem with a  frozen variable is not the way to tackle this problem.\\
Instead, our approach  is based  on the use of a variational formulation for the viscosity solutions of \eqref{equation2}, that has been introduced in the coercive case by  Souganidis \cite{S1} and Rezakhanlou-Tarver \cite{RT}.  This variational approach is motivated by $\Gamma$-convergence methods for the random Lagrangian. In order to define the associated variational problem from the Hamilton-Jacobi equation, some form of convexity is needed, but it should be noted that due to the degeneracy the relation is more subtle than the Euclidean Legendre transform, see \cite{BCP}.
Moreover the approach developed  in \cite{RT, S1}  fails  since the idea of using the Subadditive Ergodic Theorem indirectly requires the existence of curves invariant under translation and rescaling (as straight lines are w.r.t. the Euclidean translations). In our anisotropic geometries, this property is true only for curves that have constant horizontal speed (i.e. velocity constant w.r.t. a given family of left-invariant vector fields, see Section 2 for more details). Unfortunately those lines are too few to cover the whole space (they only generate a $m$-dimensional submanifold in $\R^N$ where usually $N>m$).\\
Then main idea of the  proof for the convergence theorem is to apply  the techniques from \cite{RT, S1} (for the periodic case see also \cite{E}) to a lower dimensional constrained variational problem (Section 5), the constraint being to belong to the $m$-dimensional manifold mentioned above. Then by an approximating argument (Section 6) we write the original variational problem  \eqref{Lepsilon}
as limit of sum of lower dimensional constrained variational problems. The key role in the whole argument will be to approximate any horizontal curve by a suitable family of piecewise horizontal lines with constant speed and the use of the H\"ormander condition to move everywhere in the space.\\
 Here our a priori bounds on the Lagrangian ensure that the cost of connecting any two points can be bounded by a function of the geodesic distance. This allows to estimate the difference in cost for connecting nearby points,  
 a property which makes up for the lack of uniform continuity of the Lagrangian due to the rescaling in space. 
\\

This is to our knowledge the first paper which connects two previously separate branches of homogenization theory: Stochastic homogenization on the one hand, which so far has not been considered in sub-Riemannian geometries, and homogenization in the sub-elliptic setting, which so far has been restricted to a suitable generalization of periodic environments, i.e. essentially in a compact setting.
For homogenization in subelliptic settings in the periodic case see for example \cite{BW, BMT, Franchi1, Franchi2, MS, STR1}, and for homogenization with singular perturbation see  \cite{AB1,ABM}.


Since the first results on stochastic homogenization for first-order Hamilton-Jacobi equations (\cite{S1,RT}), it has been a difficult question which are the necessary conditions on the deterministic structure  of the Hamiltonian, with convexity and coercivity being sufficient.  The case of non-convexity has been understood better recently, see e.g. \cite{Armstrong-Carda, FS, Ziliotto}. Instead this paper gives a very general  class of examples which are convex (but not strictly) but non-coercive. This homogenization result is in line with the folk theorem that, in order to have homogenization, characteristics have to be able to go everywhere: our degeneracy is related to H\"ormander vector fields, which have the property that admissible paths (see Section 2) can connect any two given points.\\
$\Gamma$-convergence for random functionals, which is used here, has in a general setting first been studied by Dal Maso and Modica, \cite{DalMasoModica} and recently been extended to non-convex integrands, \cite {DG}.
Alternatives to the variational approach for obtaining stochastic homogenization results in the Euclidean setting for both first and second order equations and the simultaneous effect of homogenization and vanishing viscosity (i.e. singular perturbation) have been developed subsequently, see for example \cite{KRV, LS05, LS10}. Extending these methods to the sub-Riemannian setting will be a challenge for further research.\\

This paper is organized as follows.\\
In Section 2 we introduce some basic notions for Carnot groups, in particular the dilations in the group and some norms and distances related to both the geometric and the algebraic structure of Carnot groups.
In this section we also introduce horizontal curves, horizontal velocity and study some properties, which will be very useful in later proofs.\\
In Section 3 we state the problem and we explain the meaning on some assumptions on the Hamiltonian; in particular the stationary ergodic assumption which is crucial in order to get a deterministic limit problem. 
In the same section we also introduce the variational formulation for the solutions of the $\varepsilon$-problem.
\\
In Section 4 we study several properties for the variational problem. In particular we  prove local uniform continuity.\\
In Section 5 we prove the convergence for the constrained variational problem, i.e. for the minimizing problem for an integral cost under the additional $m$-dimensional constraint.\\
In Section 6 we prove our main convergence result for the unconstrained variational problem by the introduction of a suitable approximation argument.\\
In Section 7 we apply the  convergence proved in Section 6 to the family of non-coercive Cauchy-Hamilton-Jacobi problems \eqref{equation2} via variational formula.\\
In the Appendix (Section 8) we give a proof for the well-posedness of the $\varepsilon$-problem  \eqref{equation2} in the viscosity sense.

\section{Preliminaries: Carnot groups.}
Carnot groups are non-commutative Lie groups: they are endowed both with a non-commutative algebraic structure and with a manifold structure. The lack of commutativity in the algebraic structure reflects on the manifold structure as restrictions on the admissible motions. This means that the allowed curves are constrained to have their velocities in a lower dimensional subspace of the tangent space of the manifold. Then the associated manifold structure is not Riemannian but sub-Riemannian. We refer the reader to \cite{BLU} for an overview on Carnot groups and sub-Riemannian manifolds. Here we only recall the definitions and some of the main properties, which will be crucial in the later proofs.
\begin{defi}\label{defG}[Carnot group]
A Carnot group $(\G, \circ)$ of step $r$ is a simply connected, nilpotent Lie group whose Lie algebra $g$ of left-invariant vector fields admits a stratification, i.e. there exist non zero subspaces $\{V_i\}$, $i=1,\dots r$ such that $g=\bigoplus_{i=1}^r V_i$, $[V_1, V_i]=V_{i+1}\neq 0$, for $i=1,\dots r-1$, $[V_1,V_r]=0$. $V_1$ is called the first layer.\\
Any such group is isomorphic to a homogeneous Carnot group in $\R^N$, that is a triple $(\R^N, \circ, \delta_{\lambda})$ where $\R^N=\R^{n_1}\times\R^{n_2}\times\dots\times\R^{n_k}$, $\circ$ is a group operation whose identity is $\dE$ and such that $(x,y)\rightarrow y^{-1}\circ x$ is smooth (where $y^{-1}$ denote the inverse of $y$), and $\delta_{\lambda}:\R^N\rightarrow \R^N$ is the dilation:
\begin{equation}
\label{Dilations}
\delta_{\lambda}(x)=
\delta_{\lambda} \left(x^{(1)},x^{(2)},\cdots, x^{(r)}\right):=\left(\lambda\,x^{(1)},\lambda^2\,x^{(2)}, \cdots,\lambda^r x^{(r)}\right),\ x^{(i)}\in \R^{n_i},
\end{equation}
is an automorphism of the group $(\R^N, \circ)$ for all $\lambda>0$ and there are $m:=n_1$ smooth vector fields  $X_1$, $\cdots$, $X_m$ on $\R^N$ invariant with respect to the left translation
$$
L_{\beta}(x):=\beta\circ x
$$ 
for all $\beta\in\R^N$ and such that 
they generate a Lie algebra with rank $N$ at every point $x\in\R^N$.
The vector fields $X_1$, $\cdots$, $X_m$  are called the generator of the Carnot groups or horizontal vector fields and the $n\times m$ matrix whose columns are these vector fields is denoted by $\sigma$.

For $x\in\R^N$, we shall also use the notation: $x=(x^1,x^2)$ with $x^1\in\R^m$, $x^2\in \R^{N-m}$ and $x^1:=\pi_m(x)$. 
\end{defi}
The definition of dilations (that replace the role of product of a point by a scalar in  the Euclidean case) gives good notions of rescaling in these geometries.\\

Note that we are interested only in the case where $\G=\R^N$ for some $N\geq 3$ (in fact Carnot groups  with dimension less than 3 do not exist).
\begin{ex}
\label{Heisenberg}
The simplest example of a Carnot group is the so called Heisenberg group.  
The $N$-dimensional Heisenberg group $\dH^N$ is a Carnot group of step 2 (i.e. $r=2$ in the stratification) defined in $\R^{2N+1}$ (with $N\geq 1$).  In particular if $N=1$ the stratification
is $V_1\bigoplus V_2$, where $V_1= \re^2$ and $V_2=\re$.
In this last case
the group operation is
$$
x\circ y:=\left(x_1+y_1, x_2+y_2, x_3+y_3+\frac{x_1y_2- x_2y_1}{2}\right)
$$
where $x=(x_1, x_2, x_3)$ and $y=(y_1, y_2, y_3)$ are two points in $\R^{3}$ and 
the generators are the two vector fields
$$ X_1(x)=\left(\begin{array}{c}1 \\0 \\\frac{x_2}{2}\end{array}\right),\ 
X_2(x)=\left(\begin{array}{c}0 \\1 \\\frac{-x_1}{2}\end{array}\right)
$$
\end{ex}
In the Heisenberg group $\dH^1$ the dilations that give the natural rescaling are
$$
\delta_{\lambda}(x)=\delta_{\lambda} (x_1,x_2,x_3)=(\lambda\,x_1,\lambda\,x_2, \lambda^2\,x_3).
$$
To make the paper more easily readable for mathematicians not used to worked in Carnot groups
we will explain most of the notions and 
properties of Carnot groups, using the 1-dimensional Heisenberg  group $\dH=\dH^1$ as referring model.\\ 

Another family of algebraic objects which will play a crucial role in our homogenization problem are the translations. Since the group law is not commutative, in general left translations and right translations will be different. We will always translate points using only the left translations.\\
Using the stratification, a Carnot group can be endowed with a homogeneous norm that induces a homogeneous distance. The homogeneous norm and the homogeneous distance are very important in homogenization problems since they are compatible with rescaling under dilations (as we will see in the properties  below).
\begin{defi}\label{homogenousNorm}
[Homogeneous norm and homogeneous distance]
A homogeneous norm $\|\cdot\|_h$ is a continuous function from $\G$ to $[0,+\infty)$ such that 
\begin{enumerate}
\item 
$\|x\|_h=0\ \iff\ x=\dE$
\item 
$\|x^{-1}\|_h=\|x\|_h$ 
\item $\|\delta_{\lambda}(x)\|_h=\lambda\|x\|_h, \forall x\in \G, \lambda>0$
\item
$\|x+y\|_h\leq \|x\|_h+\|y\|_h, \forall x,y\in \G.$
\end{enumerate}
The homogeneous distance between two points $x,y\in \G$ is
$$
d_h(x,y)=\|y^{-1}\circ x\|_h.
$$
From $\|x\|_h=\|x^{-1}\|_h$ we have that $d_h(x,y)=d_h(y,x)$ and obviously $d_h(x,x)=0$ for all $x,y\in \G$.\\
Moreover,  given two points $x, y\in \G\equiv \R^N$, $\big(\delta_{\lambda}(x)\big)^{-1}=\delta_{\lambda}(x^{-1})$ and \\
  $\delta_{\lambda}(x)\circ \delta_{\lambda}(y) =\delta_{\lambda}(x\circ y)$.
 This implies that 
 $$d_h\big(\delta_{\lambda}(x),\delta_{\lambda}(y) \big)=\lambda\,d_h(x,y).
 $$
\end{defi}

In the case of the 1-dimensional Heisenberg group $\dH$ we have
$$
\norma{x}_h=\norma{(x_1,x_2,x_3)}_h=\big((x_1^2+x_2^2)^2+x_3^2\big)^{1/4}.
$$
Moreover it is easy to check that $\dE=(0,0,0)$ and $x^{-1}=(-x_1,-x_2,-x_3)$ so
$$
d_h(x,y)=\big(((x_1-y_1)^2+(x_2-y_2)^2)^2+(x_3-y_3)^2\big)^{1/4}.
$$
One can easily check all the properties listed above in the case  of the  1-dimensional Heisenberg group.\\


For later use, it is very useful to  introduce the $m\times n$ matrix associated to the vector fields 
$$
\sigma(x):=\big(X_1(x),\dots,X_m(x)\big)^T
$$
e.g. in $\dH^1$ the matrix $\sigma(\cdot)$ is the $2\times 3$-matrix given by 
\begin{equation}
\label{MatrixHeisenberg}
\sigma(x_1,x_2,x_3)=
\begin{pmatrix}
1 &0& -\frac{x_2}{2}\\
0&1&\frac{x_1}{2}
\end{pmatrix}.
\end{equation}
 
 From now on we will always consider the Carnot groups, written 
 in exponential coordinates (or  canonical coordinates).
 In fact in exponential coordinates the vector fields  (and so the associated matrix $\sigma(x)$) assume a special form, as shown in the following lemma.
 \begin{lemma}
 Given a Carnot group in exponential (or canonical) coordinates, then the vector fields can be considered as the columns of a $m\times N$ matrix $\sigma(x)$ of this form 
 \begin{equation}\label{matrixC}
 \sigma(x)=\begin{pmatrix}Id_{m\times m}&A(x)\end{pmatrix}
 \end{equation}
 where $Id_{m\times m}$ is the identity matrix $m\times m$ and $A(x)$ is a  $m \times (N-m)$  matrix whose coefficients are smooth functions depending only on $x_1,\dots, x_m$.\\
Moreover the non-vanishing coefficients of $A(x)=(a_{j,i}(x))$  with $i=1,\dots,N-m$ and $j=1,\dots,m$ are polynomial functions of degree $k-1$ whenever the $(m+i)$-th component rescale as $\lambda^k$ in the dilations $\delta_{\lambda}$ defined in \eqref{Dilations}.
\end{lemma}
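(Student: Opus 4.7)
The plan is to combine the Baker-Campbell-Hausdorff (BCH) formula with the dilation-homogeneity of the left-invariant vector fields. In exponential coordinates we identify $\G \cong \R^N$ with its Lie algebra $\mathfrak{g}$ via $\exp$, so that the group law reads $x \circ y = \mathrm{BCH}(x,y)$; because $\mathfrak{g}$ is nilpotent of step $r$, BCH collapses to a finite polynomial of iterated brackets.

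First I would compute $X_j(x) = \frac{d}{dt}\big|_{t=0} x \circ (t e_j)$ and retain only the terms linear in $t$ in the BCH expansion. This yields the finite sum
\[
X_j(x) \;=\; e_j + \sum_{n=1}^{r-1} c_n\,(\mathrm{ad}_x)^n(e_j),
\]
with universal combinatorial constants $c_n$ (in particular $c_1 = 1/2$). Because every iterated bracket $(\mathrm{ad}_x)^n(e_j)$ with $n \geq 1$ raises the grading by at least one, via $[V_i, V_l] \subseteq V_{i+l}$ (with the convention $V_s = 0$ for $s>r$), all such terms lie in $V_2 \oplus \cdots \oplus V_r$. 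Projecting $X_j(x)$ onto $V_1$ therefore leaves only $e_j$, which gives the identity block $\mathrm{Id}_{m\times m}$ in the first $m$ columns of $\sigma$.

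Next, to obtain the polynomial structure of $A(x)$, I would expand $x = \sum_k x_k e_k$ and substitute into the iterated brackets, using the structure constants of $\mathfrak{g}$ to read off each coordinate $a_{j,i}(x)$ as a polynomial in the variables $x_k$. To control its degree and its dependence I would invoke the homogeneity identity $(\delta_\lambda)_* X_j = \lambda X_j$, which holds because $\delta_\lambda$ is a group automorphism and $e_j \in V_1$ is rescaled by a factor $\lambda$. Written in coordinates, and using that the $(m+i)$-th coordinate rescales as $\lambda^k$, this translates to
\[
a_{j,i}(\delta_\lambda x) \;=\; \lambda^{k-1}\, a_{j,i}(x),
\]
so that $a_{j,i}$ is a $\delta_\lambda$-homogeneous polynomial of weighted degree $k-1$.

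The main point I expect to require care is turning this weighted-homogeneity into the claimed dependence statement: any monomial $\prod_l x_l^{\alpha_l}$ appearing in $a_{j,i}$ must satisfy $\sum_l \alpha_l w_l = k-1$, where $w_l$ denotes the layer-weight of $x_l$; hence no coordinate of weight $w_l \geq k$ can appear. In the step-$2$ Carnot groups that serve as the paper's leading examples (Heisenberg and the like) this forces every surviving coordinate to have weight $1$, namely $x_1, \ldots, x_m$, which yields the stated dependence. All the remaining verifications reduce to algebraic bookkeeping inside the finite BCH expansion.
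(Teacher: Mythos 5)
Your BCH computation correctly yields the identity block and the $\delta_\lambda$-homogeneity of weighted degree $k-1$: differentiating $\mathrm{BCH}(x,te_j)$ at $t=0$ does give
$X_j(x)=e_j+\sum_{n\ge 1}c_n(\mathrm{ad}_x)^n(e_j)$
with all $n\ge1$ terms lying in $V_2\oplus\cdots\oplus V_r$, and $(\delta_\lambda)_*X_j=\lambda X_j$ forces $a_{j,i}(\delta_\lambda x)=\lambda^{k-1}a_{j,i}(x)$. So far so good.

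The gap is in the last paragraph. You correctly observe that weighted homogeneity alone only rules out coordinates of weight $\ge k$, and that the ``depends only on $x_1,\ldots,x_m$'' claim follows automatically only in step $2$. You then assert the general case is ``algebraic bookkeeping'' --- this is not so, and in fact the conclusion is \emph{false} in exponential coordinates of the first kind once $r\ge 3$. Take the Engel algebra with $[X_1,X_2]=X_3$, $[X_1,X_3]=X_4$, all other brackets zero. Then in first-kind exponential coordinates
$$
X_1(x)=e_1-\tfrac{x_2}{2}\,e_3-\Bigl(\tfrac{x_3}{2}+\tfrac{x_1x_2}{12}\Bigr)e_4,
$$
whose $e_4$-coefficient has weighted degree $2=k-1$ but genuinely involves the second-layer variable $x_3$, and hence has Euclidean degree $1$, not $k-1$. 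So the lemma's dependence claim (and with it the claim that the degree is $k-1$ in the \emph{Euclidean} sense) is not a consequence of BCH homogeneity: it encodes the existence of a special graded coordinate system. Constructing such coordinates --- and showing one can normalize so that the coefficients are polynomials only in $x_1,\ldots,x_m$ --- is precisely the nontrivial content of \cite[Proposition 1.3.5 and Corollary 1.3.19]{BLU}, which the paper cites rather than proves; your argument would need to reproduce that normalization step, not just the homogeneity count. (This is also why the paper's Engel example has $X_1=\partial_1$: those are not first-kind exponential coordinates, but the normalized ones.)
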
  
For a proof we refer the reader to  \cite{BLU}; in particular see  \cite[Proposition 1.3.5, Corollary 1.3.19] {BLU} for the polynomial structure and the corresponding homogeneity degree. Remember that $\delta_{\lambda}$-homogeneity corresponds to Euclidean homogeneity whenever the functions depend only on the first $m$ components.\\

The previous lemma is easy to check in the 1-dimensional Heisenberg group (see \eqref{MatrixHeisenberg}), in fact 
$a_{1,1}(x_1,x_2)=-\frac{x_2}{2} $ and $a_{2,1}(x_1,x_2)=\frac{x_1}{2} $ are both polynomials of degree 2-1=1. We now give another example for a  step 3 Carnot group.
\begin{ex}[Engel group in exponential coordinates]
The Engel group is  Carnot group of step 3 defined on $\R^4$.  It can be written as extension of the Heisenberg group but  for us it is crucial to write it in exponential coordinates (see e.g. \cite{LeDonne}).
The rescaling in the Engel group is given by 
$$
\delta_{\lambda}(x_1,x_2,x_3,x_4)=\big(\lambda x_1,\lambda x_2,\lambda^2 x_3,\lambda^3 x_4
\big).
$$
In exponential coordinate the vector fields  generating $V_1$ can be written as
$$
X_1(x_1,x_2,x_3,x_4)=\frac{\partial}{\partial x_1}\;\textrm{and}
\;X_2(x_1,x_2,x_3,x_4)
=\frac{\partial}{\partial x_2}+x_1\frac{\partial}{\partial x_3}+\frac{x_1^2}{2}\frac{\partial}{\partial x_4}.
$$
In this case the corresponding $2\times 4$-matrix has the form of a $2\times 2$-identity matrix and a $2\times 2$ matrix $A(x)$ whose coefficients are $a_{1,1}(x)=0=a_{1,2}(x)$, while $a_{2,1}(x)=x_1$ which is a polynomial of  degree 1 (in fact the component  $2+1=3$  rescales with $k=2$), and 
$a_{2,2}(x)=\frac{x_1^2}{2}$ which is a polynomial of  degree 2 (in fact the  component $2+2=4$  rescales with $k=3$). Then Lemma \ref{matrixC} is easily verified.
\end{ex}
So far, we have briefly recalled the algebraic structure of Carnot groups.
Since Carnot groups are also sub-Riemannian manifolds there is also another important distance to consider: the so called Carnot-Carath\'eodory distance.
Before defining the Carnot-Carath\'eodory distance and its relations with the homogeneous distance and the Euclidean distance, we need to introduce the sub-Riemannian manifold structure associated to a Carnot group.
 Consider the left-invariant vector fields   $X_1,\dots, X_m$ introduced above on $\R^N$, then by identifying the tangent space at the origin with the Lie algebra $g$ (see Definition \ref{defG}) and in any other point by left-translation, then $X_1,\dots, X_m$  satisfy the H\"ormander condition with step $r$.  We remind that the H\"ormander condition states that the Lie algebra induced by the vector fields has to be at any point equal to the whole tangent space at that point.\\
Denoted by $\cH_x=\textrm{Span}\big(X_1,\dots,X_m\big)$ the distribution spanned by the given left-invariant vector fields, then it is possible to define a  Riemannian metric
on $\cH_x$ induced by the vector fields, by taking $<v,w>=\alpha\cdot\beta$ where $\alpha$ and $\beta$ are $m$-valued vectors, corresponding to the coordinates of $v$ and $w$ respectively, w.r.t. the given vector fields.\\
The triple $\big(\R^N,\cH_x,<\cdot,\cdot>\big)$ is a sub-Riemannian manifold.
For more details on sub-Riemannian manifolds in general and the manifold structure associated to Carnot groups in particular, we refer respectively to \cite{montgomery} and \cite{BLU}. \\
Next we recall the notion of horizontal (or admissible) curve that will play a crucial role in  defining the Carnot-Carath\'eodory distance and 
later in the variational formulas.

\begin{defi}\label{horcurv}
An absolutely continuous  curve $\xi:[0,T]\to \R^N$ is called horizontal if 
 there exists   $\alpha^{\xi}:[0,T ]\to \R^m$ measurable such that
 \begin{equation}
 \label{EQ_Horizontal}
 \dot{\xi}(s)=\sum_{i=1}^{m}\alpha_i^{\xi}(s)X_i(\xi(s)),\quad a.e. \;s\in (0,T),
 \end{equation}
where the vector fields $X_i$ are those introduced in Definition \ref{defG}.\\ 
The vector $\alpha^{\xi}$ is called {\em horizontal velocity} of the curve.
\end{defi}
\begin{rem}
\label{RemarkLinearIndepedent}
Note that whenever $X_1,\dots, X_m$ are linearly independent, as they are  always in the case of Carnot groups (see e.g. \cite[Ch. 1]{BLU}), the vector $\alpha^{\xi}$ is unique up to a measure zero set.
\end{rem}


Let us define the Carnot-Carath\'eodory distance (briefly C-C distance) associated to a family of vector fields $\cX=\{X_1,\dots,X_m\}$.
\begin{defi}
\label{CC-distance_definiton}
Given two points $x,y\in \R^N$ and a family of smooth vector fields on  $X_1,\dots, X_m$, we define the Carnot-Carath\'eodory distance as the minimal length distance (or geodesic distance) among all horizontal curves joining $x$ to $y$, that is
$$
d_{CC}(x,y)=\inf\left\{
\int_0^T|\alpha^{\xi}(t)|\,dt\,\bigg|\, \xi(0)=x,\,\xi(T)=y\, \textrm{and $\xi$ is horizontal}
\right\},
$$
where $|\alpha^{\xi}(t)|$ is the Euclidean norm of the $m$-valued horizontal velocity.
\end{defi}
Whenever $X_1,\dots, X_m$ satisfy the H\"ormander condition (as in our  case of Carnot groups), then
$d_{CC}(x,y)<+\infty$ for all $x,y\in\re^N$ and it is continuous w.r.t. the Euclidean topology on $\R^N$.\\
 We denote by $\|x\|_{CC}:=d_{CC}(x,0)$ the Carnot-Carath\'eodory norm.\\

\begin{rem}
The Carnot-Carath\'eodory distance  is globally equivalent to the so-called minimal-time (or control) distance that is defined as  
$$\hat{d}(x,y):=\inf\{T\geq 0 | \exists\ \xi\,\textrm{subunit horizontal  in}\, [0,T]\ \textrm{with}\; \xi(0)=x, \xi(T)=y\},$$
where an absolutely continuous  curve $\xi:[0,T]\to \R$ is called {\em subunit horizontal} if satisfies \eqref{EQ_Horizontal} and $|\alpha^{\xi}(t)|\leq 1$ for a.e. $t\in [0,T]$.\\
\end{rem}

Note that, even if it is possible to give an explicit formulation for the Carnot-Carath\'eodory distance in $\dH$ (by computing the geodesics), this is extremely complicated so we omit that.\\
Thus we will need to use both the Carnot-Carath\'eodory distance and the homogeneous distance, so it is important to recall the relation between these distances and between them and the standard Euclidean distance in $\R^N$.
\begin{lemma}
\label{ReelationDistances}
Let $d_h$ and $d_{CC}$ be  the homogeneous distance  and the Carnot-Carath\'eodory distance defined respectively in Definitions \ref{homogenousNorm} and \ref{CC-distance_definiton}.
Then for any compact $K\subset\re^N$ there exists a positive constant $C_K$ such that
$$ C^{-1}_K|x-y|\leq d_{CC}(x,y)\leq C_K |x-y|^{1/r},$$
where $r$ is the step of the Carnot group and $|x-y|$ denotes here the standard Euclidean distance in $\R^N$.\\
The same statement holds also replacing $d_h$ and $d_{CC}$.\\
Moreover $d_h$ and $d_{CC}$ are equivalent distance on compact sets, i.e. for any compact $K\subset\re^N$ there exists a positive constant $c_K$ such that
$$ c^{-1}_Kd_{h}(x,y)\leq d_{CC}(x,y)\leq c_K d_{h}(x,y).$$
\end{lemma}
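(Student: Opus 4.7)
The plan is to combine a direct analysis of horizontal curves with the left-invariance and $\delta_\lambda$-homogeneity of the two distances, then patch the pieces together. I would proceed in three steps: first the lower bound $|x-y|\le C_K\,d_{CC}(x,y)$, then the upper bound $d_h(x,y)\le C_K|x-y|^{1/r}$, and finally the equivalence $d_h\asymp d_{CC}$, from which the remaining inequalities follow by composition.

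For the lower bound I would exploit the structure of $\sigma$ in exponential coordinates given by the previous lemma, $\sigma(x)=(I_{m\times m}\mid A(x))$ with $A$ polynomial in the first $m$ coordinates only. For any horizontal curve $\xi:[0,T]\to\mathbb{R}^N$ joining $x$ to $y$ with velocity $\alpha$ one has $\dot\xi=\sigma(\xi)^{T}\alpha$, so the first $m$ components satisfy $\dot\xi_i=\alpha_i$; hence $|\xi^{(1)}(s)|\le|x^{(1)}|+\int_0^s|\alpha|\,dt$, and since $A$ depends only on $\xi^{(1)}$, the matrix norm $|\sigma(\xi(s))|$ stays bounded by a constant depending only on $|x|$ and on the total horizontal length $L:=\int_0^T|\alpha|\,dt$. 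Therefore $|x-y|\le\int_0^T|\dot\xi|\,dt\le C(|x|,L)\,L$, and taking the infimum over curves with $L$ close to $d_{CC}(x,y)$ gives the desired bound on compact sets, once $d_{CC}$ is known to be bounded there (which will follow from Step~2).

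For the upper bound on $d_h$, left-invariance reduces the claim to bounding $\|z\|_h$ for $z=y^{-1}\circ x$ ranging in the Euclidean-compact set $K^{-1}\circ K$. Writing $z=\sum_i z^{(i)}$ in the stratification and applying the triangle inequality, $\|z\|_h\le\sum_i\|z^{(i)}\|_h$. Each summand satisfies $\|z^{(i)}\|_h\le C\,|z^{(i)}|^{1/i}$: restricted to the $i$-th stratum, $\delta_\mu$ acts as the Euclidean scaling $w\mapsto\mu^i w$, so the ratio $\|w\|_h/|w|^{1/i}$ is scale-invariant, depends only on the direction of $w$ in $\mathbb{R}^{n_i}$, and is bounded by compactness of the Euclidean unit sphere there. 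On the bounded set $K^{-1}\circ K$ the exponent $1/i-1/r\ge 0$ then yields $|z^{(i)}|^{1/i}\le C(K)\,|z|^{1/r}$, hence $\|z\|_h\le C_K|z|^{1/r}$.

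For the equivalence, both distances are left-invariant ($d_h$ by definition and $d_{CC}$ since the $X_i$ are left-invariant, so left translations preserve horizontality and horizontal velocity) and both are $1$-homogeneous under $\delta_\lambda$: for $d_h$ this is condition~(3) in the definition, and for $d_{CC}$ one uses that first-stratum left-invariant vector fields are $\delta_\lambda$-homogeneous of degree~$1$, giving $\dot{(\delta_\lambda\circ\xi)}=\lambda\sum_i\alpha_i X_i(\delta_\lambda\circ\xi)$, so that $\delta_\lambda\circ\xi$ is horizontal with velocity $\lambda\alpha$ and $d_{CC}(\delta_\lambda x,\delta_\lambda y)=\lambda\,d_{CC}(x,y)$. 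These two properties reduce the equivalence to comparing $z\mapsto d_{CC}(z,0)$ and $z\mapsto\|z\|_h$ on the homogeneous unit sphere $S=\{z:\|z\|_h=1\}$, which is Euclidean-compact since $\|\cdot\|_h$ is continuous and proper. On $S$, $d_{CC}(\cdot,0)$ is continuous and strictly positive, hence pinched between two positive constants, and homogeneity combined with left-invariance extends the equivalence to all pairs $(x,y)$. The remaining two inequalities then follow by composing this equivalence with Steps~1 and~2. The main obstacle is the verification of the $\delta_\lambda$-homogeneity of $d_{CC}$, which rests on the precise compatibility between dilations and the first-stratum vector fields; a secondary subtlety in Step~1 is keeping the minimizing curves in a bounded region, which is handled cleanly by $\dot\xi^{(1)}=\alpha$ and the polynomial form of $A$.
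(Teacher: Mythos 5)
The paper itself gives no proof of this lemma---it simply defers to the monograph \cite{BLU}---so there is no ``paper approach'' for direct comparison; your argument is a self-contained substitute for that citation, and it is essentially sound. The structure is the right one: a direct estimate for $|x-y|\lesssim d_{CC}$ using the block form $\sigma=(I_m\mid A)$, a stratification argument for $d_h\lesssim |x-y|^{1/r}$, and a scaling-plus-compactness argument for $d_h\asymp d_{CC}$, with the remaining two inequalities obtained by composition. The use of Lemma \ref{leftinv}(iii) for the $\delta_\lambda$-homogeneity of $d_{CC}$ and the continuity of $d_{CC}$ (from the H\"ormander condition, as stated right after Definition \ref{CC-distance_definiton}) are both used correctly.

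There is, however, one genuine (if small) gap in Step~2, and one assertion that deserves a sentence of justification. In Step~2 you reduce to bounding $\|z\|_h$ for $z=y^{-1}\circ x$ and conclude $\|z\|_h\le C_K|z|^{1/r}$. But the lemma asks for $\|y^{-1}\circ x\|_h\le C_K|x-y|^{1/r}$, and $|z|=|y^{-1}\circ x|$ is not the same quantity as $|x-y|$. You need the additional observation that, in exponential coordinates, the map $(x,y)\mapsto y^{-1}\circ x$ is polynomial, vanishes when $x=y$, and is therefore Lipschitz on $K\times K$ with $|y^{-1}\circ x|\le L_K|x-y|$; only then does $|z|^{1/r}\le L_K^{1/r}|x-y|^{1/r}$ close the argument. (A symmetric estimate is also needed if you want to run Step~1 through the $z$-variable, but your Step~1 works directly with $|x-y|$ so that is not an issue.) Separately, in Step~3 you assert that $\|\cdot\|_h$ is ``proper'' to get compactness of the homogeneous unit sphere; this is true but does not appear in Definition \ref{homogenousNorm} and should be derived, e.g.\ by writing any $z\neq 0$ as $z=\delta_t(w)$ with $|w|=1$, using that $\|\cdot\|_h$ attains a positive minimum on $S^{N-1}$, so $\|z\|_h\gtrsim t\to\infty$ as $|z|\to\infty$. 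Finally, note that the logical order must be: Steps~2 and~3 first (giving $d_{CC}$ bounded on compacts), then Step~1, since Step~1's constant $C(|x|,L)$ needs an a priori bound on the horizontal length $L$ of near-minimizers; you flag this yourself, which is correct, but the exposition should reflect it.
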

For the proof we refer to the monograph \cite{BLU}.\\

In the following Lemma we collect several properties of horizontal curves that will be very useful later.
\begin{lemma}\label{leftinv}
Let 
$\xi$ be a horizontal curve with velocity 
$\alpha^{\xi}(s)$
such that 
$\xi(0)=x$ and $\xi(t)=y$.
Then the following properties hold:
\begin{enumerate}
\item[(i)]  For any $z\in\re^N$, $\widetilde\xi(s):=z\circ\xi(s)$ is still horizontal 
with
$\alpha^{\widetilde\xi}(s)=\alpha^{\xi}(s)$,
$\widetilde\xi(0)=z\circ x$ and $\widetilde\xi(t)=z\circ y$.
\item[(ii)]   For any $C>0$, $\eta(s):=\xi(Cs)$ is still horizontal 
with
$\alpha^{\eta}(s)=C\alpha^{\xi}(Cs)$, 
$\eta(0)=x$ and $\eta(t/C)=y$.
\item[(iii)]  For any $\lambda>0$, $\hat\xi(s):=\delta_\lambda(\xi(s))$
 is still horizontal 
with
$\alpha^{\hat\xi}(s)=\lambda\alpha^{\xi}(s)$.
\end{enumerate}
\end{lemma}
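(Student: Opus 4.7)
Each of the three statements has the same flavour: apply the chain rule to the new curve, and then use an algebraic identity satisfied by the horizontal vector fields $X_1,\dots,X_m$ under the relevant transformation. Specifically, the identities I need are left-invariance of the $X_i$ (for (i)), the trivial scaling of time (for (ii)), and the $\delta_\lambda$-homogeneity of degree one of the $X_i$ expressed by $D\delta_\lambda(x)\,X_i(x)=\lambda\,X_i(\delta_\lambda(x))$ (for (iii)). Once these identities are in hand, the formulas for $\alpha^{\widetilde\xi}$, $\alpha^{\eta}$, $\alpha^{\hat\xi}$ drop out by matching coefficients, using uniqueness of the horizontal velocity (Remark \ref{RemarkLinearIndepedent}).

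\textbf{Parts (i) and (ii).} For (i) I write $\widetilde\xi(s)=L_z(\xi(s))$ and differentiate: $\dot{\widetilde\xi}(s)=DL_z(\xi(s))\dot\xi(s)=\sum_{i=1}^m \alpha_i^\xi(s)\,DL_z(\xi(s))X_i(\xi(s))$. Since the fields $X_1,\dots,X_m$ are by definition left-invariant (Definition \ref{defG}), $DL_z(\xi(s))X_i(\xi(s))=X_i(z\circ \xi(s))=X_i(\widetilde\xi(s))$, which yields horizontality with the claimed velocity. For (ii) the chain rule gives directly $\dot\eta(s)=C\dot\xi(Cs)=\sum_i C\alpha_i^\xi(Cs)X_i(\xi(Cs))=\sum_i C\alpha_i^\xi(Cs)X_i(\eta(s))$; the endpoint identities are immediate.

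\textbf{Part (iii).} Here the real content lies. Writing $x=(x^{(1)},\dots,x^{(r)})$ with $x^{(j)}\in\R^{n_j}$ and splitting $X_i(x)=(X_i^{(1)}(x),\dots,X_i^{(r)}(x))$ accordingly, Lemma \ref{matrixC} (together with its statement about polynomial degrees) tells me that $X_i^{(1)}(x)$ is the constant vector $e_i\in\R^m$ and, for $j\ge 2$, every entry of $X_i^{(j)}(x)$ is a polynomial in $x^{(1)}$ that is Euclidean-homogeneous of degree $j-1$; hence $X_i^{(j)}(\delta_\lambda x)=\lambda^{j-1}X_i^{(j)}(x)$. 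Since $D\delta_\lambda(x)$ is the block-diagonal scaling by $\lambda^j$ on the $j$-th layer, I obtain
\begin{equation*}
D\delta_\lambda(x)\,X_i(x)\;=\;\bigl(\lambda^j X_i^{(j)}(x)\bigr)_{j=1}^{r}\;=\;\lambda\bigl(\lambda^{j-1}X_i^{(j)}(x)\bigr)_{j=1}^{r}\;=\;\lambda\,X_i(\delta_\lambda x).
\end{equation*}
Differentiating $\hat\xi(s)=\delta_\lambda(\xi(s))$ and substituting, $\dot{\hat\xi}(s)=D\delta_\lambda(\xi(s))\dot\xi(s)=\sum_i \lambda\alpha_i^\xi(s)\,X_i(\hat\xi(s))$, giving $\alpha^{\hat\xi}(s)=\lambda\alpha^\xi(s)$.

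\textbf{Main obstacle.} The only non-routine point is the homogeneity identity used in (iii): it rests on the specific polynomial form of $\sigma(x)$ recalled in Lemma \ref{matrixC}, which in turn needs the exponential (canonical) coordinates on the Carnot group. Outside those coordinates the clean relation $X_i^{(j)}(\delta_\lambda x)=\lambda^{j-1}X_i^{(j)}(x)$ need not hold term-by-term, so the proof genuinely uses the standing convention of the paper that all groups are written in exponential coordinates. Parts (i) and (ii) are essentially the chain rule and the definition of left-invariant vector fields.
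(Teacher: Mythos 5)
Parts (i) and (ii) of your proof are essentially identical to the paper's: chain rule plus left-invariance for (i), chain rule and reparametrisation for (ii). For part (iii) you take a genuinely different route. The paper invokes the definition of $\delta_\lambda$ as a group automorphism through the exponential map,
$\delta_\lambda(\exp(\sum_{i,j} g_{j,i}X_{j,i}))=\exp(\sum_{i,j}\lambda^i g_{j,i}X_{j,i})$,
and reads off the conclusion for curves in the first layer from that identity. You instead establish the pointwise homogeneity relation $D\delta_\lambda(x)\,X_i(x)=\lambda\,X_i(\delta_\lambda x)$ directly from the polynomial structure of $\sigma(x)$ recorded in Lemma~\ref{matrixC} (constant first block, degree-$(j-1)$ polynomials in the first-layer variables on layer $j$), combined with the block-diagonal form of $D\delta_\lambda$, and then apply the chain rule as in (i). Your argument is more explicit and self-contained — it reduces everything to the chain rule plus one concrete computation — whereas the paper's is terser but appeals to the algebraic characterisation of dilations. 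Both routes ultimately hinge on exponential coordinates, a point you correctly flag: your use of Euclidean homogeneity of the entries of $A(x)$ is legitimate precisely because, by Lemma~\ref{matrixC}, those entries depend only on $x_1,\dots,x_m$, so $\delta_\lambda$-homogeneity and Euclidean homogeneity coincide there.
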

\begin{proof}
\begin{enumerate}
\item[(i)] Denote by $L_z$ the left translation w.r.t. $z$ (i.e. $L_z(x)=z\circ x$)  and by $DL_z$ the differential of the left translation $L_z$. We have 
\begin{eqnarray*}
\dot{\widetilde\xi}(s)&=&DL_z(\xi(s))\;\dot{\xi}(s)=
DL_z(\xi(s))\;\bigg(
\sum_{i=1}^{m}\alpha_i^{\xi}(s)X_i(\xi(s))\bigg)=\\
&=&\sum_{i=1}^{m}\alpha_i^{\xi}(s)DL_z(\xi(s))\;X_i(\xi(s))=
\sum_{i=1}^{m}\alpha_i^{\xi}(s)X_i(z\circ\xi(s))=\\
&=& \sum_{i=1}^{m}\alpha_i^{\xi}(s)X_i(\widetilde\xi(s)),
\end{eqnarray*}
where we have used the fact that the vector fields $X_i$ are left-invariant by definition, i.e.
 $DL_z(\xi(s))\; X_i(\xi(s))=X_i(z\circ\xi(s))$, for all $z$.\\
\item[(ii)]  
For any $C\in\re$,  given $\eta(s)=\xi(Cs)$, then
\begin{equation}
\label{cambiooriz}
\dot{\eta}(s)=C\dot{\xi}(C s)= C \bigg(\sum_{i=1}^{m}\alpha_i^{\xi}(C s)
X_i(\xi(C s))\bigg)=\sum_{i=1}^{m}C\alpha^{\xi}_i(C s)
X_i(\eta(s)),
\end{equation}
so $\eta$ is horizontal with $\alpha^{\eta}(s)=C\alpha^{\xi}(Cs)$.\\
\item[(iii)]  
Using the fact that we are in exponential coordinates and the definition of dilations as automorphisms of the group by the exponential map, that is:
$$
\delta_{\lambda}\left(
\textrm{exp}\left(
\sum_{i=1}^r\sum_{j=1}^{m_i}g_{j,i} X_{j,i}
\right)
\right)
=
\textrm{exp}
\left(
\sum_{i=1}^r\sum_{j=1}^{m_i}\lambda^ig_{j,i} X_{j,i}
\right),
$$
where  $X_{j,i}$ for $j=1,\dots, m_i$ are a basis for the layer $V_i$,
and 
$g_{j,i}$ are the associated exponential coordinates for the point $g\in \G=\R^N$. \
From the previous formula written for horizontal curves, that means  $i=1$ and $j=1,\dots, m_1=m$, it follows immediately that 
$\hat\xi(s):=\delta_\lambda(\xi(s))$
 is horizontal 
and
$\alpha^{\hat\xi}(s)=\lambda\alpha^{\xi}(s)$.
\end{enumerate}
\end{proof}
The following lemma proves that we can control the supremum norm of two curves by the $L^1$-norm of the associated horizontal velocity.

\begin{lemma} 
 \label{aprroxCurveLemma}
 Consider two measurable functions $\alpha,\beta:[0,T]\to \R^m$ and the associated horizontal curves $\xi^{\alpha},\xi^{\beta}$ starting from the same initial point, i.e. 
$$
\dot{\xi}^{\alpha}(s)=\sum_{i=1}^m \alpha_i (s)X_i\big(\xi^{\alpha}(s)\big),\quad
\dot{\xi}^{\beta}(s)=\sum_{i=1}^m \beta_i (s)X_i\big(\xi^{\beta}(s)\big),
\quad \xi^{\alpha}(0)=\xi^{\beta}(0).
$$
If $\alpha,\beta$ are equi-bounded  in $L^1(0,T)$, then there exists a positive constant $C>0$  such that
$$
\norma{\xi^\alpha-\xi^\beta}_{\infty}\leq C  \norma{\alpha-\beta}_{L^1(0,T)}.
$$
 \end{lemma}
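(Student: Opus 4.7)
The plan is to exploit the block structure $\sigma(x)=(Id_{m\times m}\mid A(x))$ provided by Lemma \ref{matrixC}, together with the crucial fact that the entries of $A$ are polynomials depending only on the first $m$ coordinates of $x$. This decouples the horizontal ODE $\dot\xi=\sigma^T(\xi)\alpha$ into a trivial first-layer system for the first $m$ components and a system for the remaining $N-m$ components driven by the control through polynomials of the first-layer components only. Denote by $\pi_m(\xi)\in\R^m$ the first $m$ components and by $\pi_m^c(\xi)\in\R^{N-m}$ the remaining ones.

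\textbf{Step 1: the first $m$ components.} The first-layer equation reduces to $\frac{d}{ds}\pi_m(\xi^\alpha)=\alpha$, and analogously for $\xi^\beta$. Since $\xi^\alpha(0)=\xi^\beta(0)$, integrating yields
\begin{equation*}
|\pi_m(\xi^\alpha(s))-\pi_m(\xi^\beta(s))|\leq \|\alpha-\beta\|_{L^1(0,T)}\qquad\text{for every }s\in[0,T].
\end{equation*}
Moreover, the equi-boundedness hypothesis $\|\alpha\|_{L^1},\|\beta\|_{L^1}\leq M$ confines both projected curves to a fixed compact set $K_0\subset\R^m$ depending only on $M$ and on the common initial point.

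\textbf{Step 2: the remaining components.} Using the ODE and adding/subtracting $A^T(\pi_m(\xi^\beta))\alpha$ in the integrand,
\begin{equation*}
\pi_m^c(\xi^\alpha)(s)-\pi_m^c(\xi^\beta)(s)=\int_0^s A^T(\pi_m(\xi^\beta))(\alpha-\beta)\,d\tau+\int_0^s\bigl[A^T(\pi_m(\xi^\alpha))-A^T(\pi_m(\xi^\beta))\bigr]\alpha\,d\tau.
\end{equation*}
On $K_0$ the polynomial $A$ is bounded by some constant $C_1$ and Lipschitz with some constant $L$; hence the first integral is bounded by $C_1\|\alpha-\beta\|_{L^1(0,T)}$, while the second, by combining the Lipschitz estimate with the Step 1 bound, is bounded by $L\|\alpha-\beta\|_{L^1(0,T)}\|\alpha\|_{L^1(0,T)}\leq LM\|\alpha-\beta\|_{L^1(0,T)}$. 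Adding the two blocks yields the claim with $C=1+C_1+LM$.

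\textbf{Main obstacle.} The natural temptation would be to apply Gr\"onwall's inequality directly to $\dot\xi=\sigma^T(\xi)\alpha$; however the Jacobian of $\sigma$ grows polynomially in the first $m$ coordinates, so a naive Gr\"onwall estimate would produce a constant depending on $\|\xi^\alpha\|_\infty$, which is not a priori controlled. The decisive observation is that in exponential coordinates the nonlinear dependence in $\sigma$ is carried entirely by the first-layer variables, which are themselves uniformly controlled in sup-norm by the $L^1$ norm of $\alpha$. Once this is isolated, the proof reduces to a one-shot estimate; in particular no induction over the $r$ layers of the stratification is needed.
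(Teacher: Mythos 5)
Your proof is correct and takes a genuinely different route from the paper's. The paper adds and subtracts $\alpha_i(s)X_i(\xi^\beta(s))$ in the integral form of the ODE, uses local Lipschitz continuity of the vector fields on a compact set containing both curves, and closes with Gr\"onwall, giving a constant of the form $M_K\,e^{L_K\|\alpha\|_{L^1}}$; the key but unargued step there is that $\xi^\alpha$ and $\xi^\beta$ do lie in a common compact, a fact that in turn rests on the same triangular structure that you make the centrepiece. You instead exploit the block form $\sigma(x)=(Id_{m\times m}\mid A(x))$ of \eqref{matrixC} directly: the first $m$ components are handled trivially, and since $A$ depends only on those, the remaining components follow from a single add-and-subtract with no Gr\"onwall loop, yielding a constant polynomial (rather than exponential) in the $L^1$-bound $M$. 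Two small caveats. The ``main obstacle'' paragraph slightly overstates the case against Gr\"onwall: $\|\xi^\alpha\|_\infty$ \emph{is} a priori controlled in terms of $M$ and the initial point, by precisely the layered argument you describe, so the paper's proof is valid as written; yours is more transparent about where the control comes from, not logically necessary to close the estimate. And the claim that ``no induction over the $r$ layers is needed'' is only as strong as the literal reading of \eqref{matrixC} that $A$ depends on $x_1,\dots,x_m$ alone; if, as $\delta_\lambda$-homogeneity permits when $r\geq 3$, the entries of $A$ in a column corresponding to a layer-$k$ coordinate may involve coordinates from layers $1$ through $k-1$, the one-shot estimate becomes a short finite induction over the layers, with the same conclusion and the same type of constant.
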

 \begin{proof}
The proof is trivial. In fact:
$$
\xi^\alpha(t)-\xi^\beta(t)=\int_0^t \big[\dot{\xi}^\alpha(s)-\dot{\xi}^\beta(s)\big]d s=
\sum_{i=1}^m\int_0^t \left[ \alpha_i (s)X_i\big(\xi^{\alpha}(s)\big)- \beta_i (s)X_i\big(\xi^{\beta}(s)\big) \right] d s
$$
At this point we add $\pm  \alpha_i (s)X_i\big(\xi^{\beta}(s)\big)$, we use that the vector fields are smooth 
(so in particular locally Lipschitz continuous) and that $\alpha,\beta$ are equibounded. 
$\xi^{\alpha},\xi^{\beta}$ are equi-bounded, 
Hence by Gronwall's inequality one can easily conclude.

 \end{proof}
 The manifold structure is crucial when one works with PDEs. In fact vector fields allow us to define naturally derivatives of any order, just considering how a vector field acts on smooth functions.  Since we are interested in first-order Hamilton-Jacobi equations we introduce only the first derivatives.
 \begin{defi}[Horizontal gradient] For a Carnot group defined on $\R^N$, consider the family of vector fields $\cX=\{X_1,\dots,X_m\}$ associated to the Carnot group.
 The horizontal gradient is defined as 
 $$
 \cX u:=\big(X_1u\big)X_1+\dots +\big(X_mu\big)X_m.
 $$
\end{defi}
\begin{rem}
 Note that $\cX u$ is always an element of the distribution $\cH$ since it is defined as a linear combination of the vector fields that span the distribution.\\
 For sake of simplicity we will often identify the horizontal gradient (which is a $N$-dimensional object in $\cH$) with its  coordinate vector $\nabla_{\cX} u=(X_1 u,\dots ,X_mu)^T$ which is instead an element in $\R^m$.
Trivially $\nabla_{\cX}u=\sigma(x) \nabla u$ where $\nabla u$ denotes the standard (Euclidean) gradient of $u$ and 
$\sigma$ is defined in \eqref{matrixC}.
\end{rem}
\begin{ex}
In the case of $\dH$ the horizontal gradient can be explicitly  written as
$$
\nabla_{\cX} u=\begin{pmatrix}
u_{x_1}-\frac{x_2}{2}u_{x_3}\\
u_{x_2}+\frac{x_1}{2} u_{x_3}
\end{pmatrix}
=
\begin{pmatrix}
1 &0& -\frac{x_2}{2}\\
0&1&\frac{x_1}{2}
\end{pmatrix}\nabla u
\in \R^2,
$$
while $\cX u=\left(u_{x_1}-\frac{x_2}{2}u_{x_3}
\right) X_1(x_1,x_2,x_3)+\left(u_{x_2}+\frac{x_1}{2}u_{x_3}
\right) X_2(x_1,x_2,x_3)
\in \R^3
$.
\end{ex}



\section{Statement of the problem.}
For any $\e>0$, we look at the following family of randomly perturbed  problems:
\begin{equation}
\label{ApproxPr}
\left\{
\begin{aligned}
&
u^{\varepsilon}_t+H\left(\delta_{1/\varepsilon}(x), \sigma(x)\nabla u^{\varepsilon},\omega\right)=0,\ x\in\re^N, \;\omega \in \Omega, \ t>0,\\
& u^{\varepsilon}(0,x,\omega)=g(x),\ x\in \re^N, \;\omega \in \Omega,
\end{aligned}\right.
\end{equation}
where $\big(\Omega,\cF,\dP\big)$ is a probability space, $\sigma(x)$ is a smooth $m\times N$ matrix (with $m\leq N$), whose columns are vector fields $X_1, X_2,\dots,X_m$ associated to a Carnot group, $\delta_{\lambda}(\cdot)$ are the anisotropic dilations defined by  the Carnot group structure.\\
We assume that  
the Hamiltonian $H:\R^N\times \R^m\times \Omega\to \R$ satisfies the following assumptions w.r.t. $x\in \R^N$, $q=\sigma(x)p\in \R^m$ and $\omega\in \Omega$:
\begin{description}
\item[{\bf (H1)}]  $ q\mapsto H(x,q,\omega)$ is convex in $q$;
\item[{\bf (H2)}]  $\exists \; \overline{C}_1>0, \; \overline{\lambda}>1$ such that
$$ \overline{C}^{-1}_1(|q|^{\overline{\lambda}}-1)
 \leq H(x,q,\omega)
 \leq \overline{C}_1(|q|^{\overline{\lambda}}+1), \; \forall\; (x,q,\omega)\in \R^N\times\R^m\times \Omega;
 $$
\item[{\bf(H3)}] there exists $\overline{m}:[0,+\infty)\to [0,+\infty)$ concave, monotone increasing
 with $\overline{m}(0^+)=0$ and such that for all $x,y\in \R^N,q\in \R^m,\omega\in \Omega$
$$
 |H(x,q,\omega)-H(y,q,\omega)|\leq \overline{m}\big(\|-x\circ y\|_h(1+|q|^{\overline{\lambda}})\big);$$
\item[{\bf(H4)}] $ \forall\, q\in \R^m$ the function $(x,\omega)\mapsto H(x,q,\omega)$ 
is a stationary, ergodic random field on $\R^N\times \Omega$ w.r.t. the unitary translation operator, associated to 
the Carnot group structure.
\end{description}
Recall that $|q|$ is the usual Euclidean norm in $\R^m$ while $\norma{x}_h$ is the homogeneous distance  in Definition \ref{homogenousNorm}, and $x^{-1}=-x$  in exponential coordinates.
\begin{ex}[Main model]
\label{exampleMainModels} {\rm
The main model that we have in mind is:
\begin{equation}
\label{model1}
 H(x,q,\omega)= a(x,\omega)\frac{|q|^{\beta}}{\beta} +V(x,\omega),
\end{equation}
with  $\beta>1$ and $V$ and $a(x,\omega)$  are bounded, 
 and satisfying {\bf(H3)} and  {\bf(H4)} and  $a$ is also uniformly strictly positive.
}\end{ex}

\begin{rem} [Non-coercivity of the Hamiltonian] Note that the main difference between these assumptions and the assumptions in \cite{S1} 
is that {\em the Hamiltonian is not anymore coercive  w.r.t. the gradient but only w.r.t. the lower dimensional horizontal gradient} (assumption {\bf(H2)}).
This lack of coercivity w.r.t.  the total gradient variable $p$ is what will make the approach in \cite{S1} and \cite{RT} failing and will lead to the main technical difficulties.
\end{rem}
\begin{rem}
Assumption {\bf(H3)} is adapted to the anisotropic dilations in the group. Nevertheless using Lemma \ref{ReelationDistances} this assumption can be rewritten in terms of the standard Euclidean distance (with a power depending on the step of the group). In particular if $H(x,p,\omega)$ is Lipschitz continuous in $x$ w.r.t. the homogeneous distance then it is only H\"older continuous  in $x$ w.r.t. the standard Euclidean distance with power $1/r$ and $r=$step of the Carnot group. E.g. in the Heisenberg group it would be $1/2$-H\"older continuous.
\end{rem}

We next write more explicitly assumption {\bf (H4)} to show how this adapts to the algebraic structure of the Carnot group.\\
Assumption {\bf (H4)}  means that there exists a family of measure-preserving maps $\tau_x:\Omega\to \Omega,$ indexed by either $x$ in the Carnot group or $x$ in a discrete version of the Carnot group ($\mathbb Z^N$ as subset of the Carnot group equipped with the group operation of the Carnot group) with the following properties:
\begin{itemize}
\item $\tau_0=id$
\item $\tau_x(\tau_y(\omega))=\tau_{x\circ y}(\omega)$  for all $x,y\in \mathbb R^N$ (or $\mathbb Z^N$, respectively) and almost all $\omega\in \Omega.$
\item (Stationarity) $H(x,q,\omega)=H(0,q,\tau_x(\omega))$   for all $x\in \mathbb R^N$ (or $\mathbb Z^N$, respectively) and almost all $\omega\in \Omega.$
\item (Ergodicity) If $A\subseteq\Omega$ is such that $\tau_x(A)=A$ for all $x\in \mathbb R^N$ (or $\mathbb Z^N$, respectively), then ${\mathbb P}(A)\in \{0,1\}.$
\end{itemize} 

Examples are short-correlated Euclidean-stationary random fields (by Borel-Cantelli) or (for the discrete Heisenberg group) Heisenberg-periodic sets where an independent identically distributed random variable is chosen for each cell.

\begin{rem}We would like to add some remarks concerning the assumptions on ergodicity and stationarity.
\begin{enumerate}
\item Note that we apply the ergodic theorem only to a one-dimensional {\em Abelian} subgroup of the Carnot group ($\cX$-lines), so for convergence we are completely in  the framework of classical ergodic theory, see proof of Theorem \ref{convzero}. The ergodicity with respect to actions of the full group is only used to establish that the limit is deterministic.
\item In case of the Example \ref{model1},  and short-correlated random coefficients, the convergence to a deterministic limit in Theorem \ref{convzero} follows already from the law of large numbers.
\item For examples of ergodic actions of the Heisenberg group on a probability space $(\Omega, {\mathcal F}, \dP)$ see e.g. $\cite{Da}.$ In order to obtain from this a model like Example \ref{model1} with $a(x,\omega)= 1$,
take a bounded random variable $V:\Omega\to \R$ and set $V(x,\omega)= V(\tau_x(\omega))$.
\end{enumerate}
\end{rem}
\begin{ex} 
An explicit example for a model satisfying {\bf (H4)} in the special case of Example \ref{model1} and in Heisenberg group $\dH$ can be constructed in the following way:

\noindent
Take three independent random fields on $\R,$ $f_i(x,\omega):\R\times \Omega\to \R,$ for $i=1,2,3,$ such that they are stationary ergodic w.r.t.
the action of $\R.$  Then for a Borel-measurable bounded function $G:\R^3\to \R$ the random potential 
$$ V(x_1,x_2,x_3,\omega):=G(f_1(x_1,\omega),f_2(x_2,\omega), f_3(x_3,\omega))$$ 
is Heisenberg-stationary. Indeed, by independence and one-dimensional stationarity we have for any open intervals $(a_1,a_2), (b_1,b_2), (c_1,c_2)$ that
\begin{eqnarray*}
&&{\mathbb P}\{(x_1+r,x_2+s,x_3+t+1/2(x_1s-x_2r))\in (a_1,a_2)\times(b_1,b_2)\times(c_1,c_2)\}\\&& =
{\mathbb P}\{f_1(x_1+r)\in (a_1,a_2)\}{\mathbb P}\{f_2(x_2+s)\in (b_1,b_2)\}\\&& \times{\mathbb P}\{f_3(x_3+t+1/2(x_1s-x_2r))\in(c_1,c_2)\}\\ &&=
{\mathbb P}\{f_1(x_1)\in (a_1,a_2)\}{\mathbb P}\{f_2(x_2)\in (b_1,b_2)\}{\mathbb P}\{f_3(x_3)\in(c_1,c_2)\}\\ &&={\mathbb P}\{(x_1,x_2,x_3)\in (a_1,a_2)\times(b_1,b_2)\times(c_1,c_2)\}.
\end{eqnarray*}
Since open rectangles generate the Borel-$\sigma$-algebra, the result follows. 
\end{ex}

We introduce 
\begin{equation}\label{rapprepsilon}
u^{\varepsilon}(t,x,\omega):=\inf\limits_{y\in \re^N}\big\{g(y)+L^{\varepsilon}(x,y,t,\omega)\big\}
\end{equation}
with
\begin{equation}\label{Lepsilon}
L^{\varepsilon}(x,y,t,\omega):=\inf\limits_{\xi\in \cA^t_{y,x}} \int_0^t H^*\left(\Del(\xi(s)), \alpha^{\xi}(s),\omega\right)ds
\end{equation}
where 
$$
\cA^t_{y,x}:=\big\{\xi\in W^{1,\infty}\big((0,t)\big)\,\big|\, \xi \ \;\textrm{horizontal curve such that}\; \xi(0)=y\;\textrm{and}\;\xi(t)=x\big\},
$$
while $H^*$ denotes the Legendre-Fenchel transform of $H$ w.r.t. $q\in \R^m$, that is 
$$H^*(x,q,\omega):=\sup_{p\in \R^m}\{p\cdot q-H(x,p,\omega)\}$$ and 
$\alpha^{\xi}(s)$ is the $m$-valued measurable function corresponding to the horizontal  velocity of $\xi(s)$ defined in \eqref{EQ_Horizontal}.

\begin{theorem}\label{th1}
We assume $g$  bounded and Euclidean Lipschitz continuous,
 and that the Hamiltonian $H$ satisfies {\bf(H1)-(H3)}. Then, for all fixed $\omega\in \Omega$, $u^{\varepsilon}(t,x,\omega)$ given by formula \eqref{rapprepsilon} is the  unique $BUC$  viscosity solution of problem \eqref{ApproxPr}.
 \end{theorem}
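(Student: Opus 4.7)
The plan is to follow the classical Lax--Oleinik/dynamic-programming scheme adapted to the sub-Riemannian setting: (i) show that $u^\varepsilon$ defined by \eqref{rapprepsilon}--\eqref{Lepsilon} is bounded and uniformly continuous in $(t,x)$; (ii) establish a dynamic programming principle for $L^\varepsilon$; (iii) deduce the viscosity sub- and supersolution properties together with the initial datum via Legendre--Fenchel duality; (iv) obtain uniqueness by comparison among BUC viscosity sub/supersolutions, proved in the Appendix.

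By (H1)--(H2), the Fenchel conjugate $H^*(x,\alpha,\omega)=\sup_{q\in\R^m}\{q\cdot\alpha-H(x,q,\omega)\}$ enjoys a two-sided bound $c^{-1}(|\alpha|^{\overline\lambda'}-1)\le H^*\le c(|\alpha|^{\overline\lambda'}+1)$, with $\overline\lambda'$ the exponent conjugate to $\overline\lambda$. Boundedness of $u^\varepsilon$ is obtained using the trivial horizontal curve $\xi\equiv x$ (i.e.\ $\alpha^\xi\equiv 0$) as a competitor for the upper bound, and combining Jensen's inequality, $\int_0^t|\alpha^\xi|\,ds\ge d_{CC}(y,x)$ and the boundedness of $g$ for the lower bound. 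For continuity in $x$, given $x_1,x_2\in\R^N$ we take a near-optimal horizontal $\xi_1$ for $u^\varepsilon(t,x_1)$ joining $y_1$ to $x_1$ and build a competitor for $u^\varepsilon(t,x_2)$ by concatenating $\xi_1$ (on most of $[0,t]$) with a short horizontal path from $x_1$ to $x_2$ of Carnot--Carath\'eodory length $\le C\,d_{CC}(x_1,x_2)$; the growth of $H^*$, (H3) and Lemma~\ref{ReelationDistances} turn this into a Euclidean modulus of continuity. Continuity in $t$ follows by an analogous truncation/prolongation argument. The dynamic programming principle
\[
u^\varepsilon(t,x,\omega)=\inf_{z\in\R^N}\{u^\varepsilon(t-\tau,z,\omega)+L^\varepsilon_\tau(x,z,\omega)\},\quad 0<\tau<t,
\]
is then standard, obtained by splitting any competing horizontal curve at $s=\tau$ together with a measurable selection.

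For the subsolution property, let $\phi\in C^1$ and suppose $(t_0,x_0)$ with $t_0>0$ is a local maximum of $u^\varepsilon-\phi$. For any fixed $\alpha\in\R^m$ the (locally defined) horizontal curve $\xi_\alpha$ with constant horizontal velocity $\alpha$ and $\xi_\alpha(h)=x_0$ is an admissible competitor in the DPP, which yields
\[
\phi(t_0,x_0)-\phi(t_0-h,\xi_\alpha(0))\le \int_0^h H^*(\Del(\xi_\alpha(s)),\alpha,\omega)\,ds;
\]
rewriting the left-hand side as $\int_0^h[\phi_t+\alpha\cdot\sigma(\xi_\alpha)\nabla\phi]\,ds$, dividing by $h$, letting $h\to 0$ and taking $\sup_\alpha$ while invoking the biconjugation $H=H^{**}$ (valid by (H1)) produces the subsolution inequality at $(t_0,x_0)$. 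The supersolution property is obtained symmetrically by inserting an $\eta$-optimal (rather than arbitrary) horizontal curve in the DPP and using the Fenchel--Young inequality $\alpha\cdot q-H^*(x,\alpha)\le H(x,q)$ to replace $H^*$ by $H$. The initial datum $u^\varepsilon(0,x,\omega)=g(x)$ is immediate, and continuity at $t=0$ follows from the BUC estimates in step (i).

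The main obstacle is step (iv): since $p\mapsto H(x,\sigma(x)p,\omega)$ is non-coercive, the standard comparison theorem for first-order Hamilton--Jacobi equations does not apply directly, and one must exploit the convexity (H1) and the horizontal coercivity (H2) together with the compatibility of the homogeneous distance $d_h$ with the dilations $\delta_\lambda$. Concretely, a doubling-of-variables argument is carried out with a penalization anisotropic in the spirit of Lemma~\ref{ReelationDistances}, and the horizontal growth of $H$ is used to absorb the resulting error terms; this is precisely what the Appendix is devoted to.
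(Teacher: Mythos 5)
Your plan reproduces the structure of the paper's own proof: establish boundedness and uniform continuity of the Lax--Oleinik candidate (Proposition~\ref{UC}), invoke a dynamic programming principle after restricting to bounded controls (Lemma~\ref{Step2}), derive the viscosity sub/supersolution properties, and appeal to a comparison principle. Your direct derivation of the sub/supersolution inequalities from the DPP via constant-horizontal-velocity competitors and the biconjugation $H=H^{**}$ is correct in substance and a bit more explicit than the paper's, which instead delegates this step to \cite[Section 10.3.3]{Evans} and \cite{BCP}; the paper's version has the small additional wrinkle (Lemma~\ref{Step2}) that the representation formula is first shown to solve $u_t+\mathcal H(x,Du)=0$ for the truncated Hamiltonian $\mathcal H(x,p)=\max_{|a|\le\mu}\{p\cdot\sigma(x)a-L(x,a)\}$ and then one argues $\mathcal H=H$ on smooth test functions, which is what makes the a priori bounded-control restriction harmless.

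The one place where your account diverges from the paper is step (iv). You assert that the Appendix ``is precisely devoted to'' a doubling-of-variables comparison argument with an anisotropic penalization. It is not: the Appendix proves only the uniform continuity estimates and then quotes the comparison/uniqueness theorem of Biroli \cite[Theorem 4.4]{Bir} for subelliptic evolution Hamilton--Jacobi equations that are coercive in the horizontal gradient, which applies once $u^\varepsilon\in BUC$. Your sketch of how such a comparison theorem would be proved (doubling with a penalty built from the homogeneous distance, using the horizontal coercivity (H2) to absorb the error terms) is a fair description of what Biroli's proof does, so the mathematics is not wrong, but attributing it to the Appendix is a mis-statement, and if one were to carry out the comparison argument from scratch rather than cite Biroli, the details (choice of penalization compatible with the dilations, control of the fast variable $\delta_{1/\varepsilon}(x)$ via (H3), and the role of the H\"ormander condition) would have to be worked out and are non-trivial.
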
	
The proof of this theorem will be given in the  Appendix.\\
From now on we use the notation:
\begin{equation}
\label{Lagrangian}
L(x,q,\omega):=H^*(x,q,\omega)=\sup_{p\in \R^m} \{ p\cdot q-H(x,p,\omega)\};
\end{equation}
$
L(x,\cdot,\omega)
$ is the Legendre-Fenchel transform of the Hamiltonian $H(x,\cdot ,\omega)$ taken w.r.t. $q\in\R^m$, for each $x\in \R^N$ and $\omega\in \Omega$ fixed, and it is called {\em Lagrangian} associated to the given Hamiltonian.
In the following lemma we show how the properties of the Hamiltonian pass to the associated Lagrangian.

\begin{lemma}\label{PropertiesLagrangian} 
If $H(x,q,\omega)$ satisfies {\bf (H1)-(H4)} and $
L(x,q,\omega)
$ is the associated Lagrangian defined by \eqref{Lagrangian}, then $L$ satisfies
\begin{description}
\item[{\bf (L1)}]
$ q\mapsto L(x,q,\omega)$ is convex, for all $(x,\omega)\in \R^N\times \Omega$;
\item[{\bf  (L2)}]
there exists $C_1>0$ such that
$$
C^{-1}_1(|q|^{\lambda}-1)
 \leq L(x,q,\omega)
 \leq C_1(|q|^{\lambda}+1),
 \quad\forall\; (x,q,\omega)\in \R^N\times\R^m\times \Omega,$$
 where $\lambda=\overline{\lambda}^*:=\frac{\overline{\lambda}}{\overline{\lambda}-1}$ (i.e., the conjugate of the constant $\overline{\lambda}$ defined in assumption {\bf (H2)});
\item[{\bf (L3)}] for all $R>0$, there exists $ m_R:[0,+\infty)\to [0,+\infty)$ 
concave, monotone increasing, with $m_R(0^+)=0$ and such that for all
$x,y\in \R^N,\omega\in \Omega$
 $$   |L(x,q,\omega)-L(y,q,\omega)|\leq 
 m_R\big(\|-x\circ y\|_h\big),\quad \forall\;q\in B_R(0),
 $$
 where $B_R(0)$ is a (Euclidean) ball in $\R^m$ of radius $R$;
 \item[{\bf (L4)}]
$\forall\, q\in \R^m$ the function $(x,\omega)\mapsto L(x,q,\omega)$
is stationary, ergodic random 
field on, $\R^N\times \Omega$ w.r.t. the unitary translation operator associated to 
the Carnot group structure;
\item[{\bf (L5)}]
$ L^*=H$.
\end{description}
\end{lemma}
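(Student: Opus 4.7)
The plan is to derive each property of $L$ directly from its variational definition $L(x,q,\omega)=\sup_{p\in\R^m}\{p\cdot q-H(x,p,\omega)\}$, using the corresponding property of $H$. Property \textbf{(L1)} is immediate: $L(x,\cdot,\omega)$ is a pointwise supremum of affine functions in $q$, hence convex. Property \textbf{(L4)} follows by noting that, fixing $\omega$, the stationarity identity $H(x,p,\omega)=H(0,p,\tau_x\omega)$ from \textbf{(H4)} passes through the supremum, giving $L(x,q,\omega)=L(0,q,\tau_x\omega)$; joint measurability of $L$ comes from the fact that the supremum can be restricted to a countable dense set of $p$ by continuity in $p$ (a consequence of convexity plus the growth \textbf{(H2)}). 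Property \textbf{(L5)} is the Fenchel-Moreau theorem: since $H(x,\cdot,\omega)$ is convex by \textbf{(H1)} and continuous by \textbf{(H2)} (convexity plus local boundedness implies continuity), we have $L^*=H^{**}=H$.

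For \textbf{(L2)}, I would compute the Legendre transform of the bounds on $H$. The upper bound on $L$: from $H(x,p,\omega)\geq \overline{C}_1^{-1}(|p|^{\overline{\lambda}}-1)$ we get
\[
L(x,q,\omega)\leq \sup_{p}\bigl\{|p||q|-\overline{C}_1^{-1}|p|^{\overline{\lambda}}\bigr\}+\overline{C}_1^{-1},
\]
and the one-variable optimization in $|p|$ yields $\leq C_1(|q|^{\lambda}+1)$ with $\lambda=\overline{\lambda}/(\overline{\lambda}-1)$. The lower bound on $L$: from $H(x,p,\omega)\leq \overline{C}_1(|p|^{\overline{\lambda}}+1)$, testing the supremum at the specific choice $p=c\,|q|^{\lambda-1}\,q/|q|$ with $c$ tuned to the growth constant gives $L(x,q,\omega)\geq C_1^{-1}(|q|^{\lambda}-1)$. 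Adjusting the constant $C_1$ as needed gives the stated two-sided bound uniformly in $(x,\omega)$.

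The main obstacle is \textbf{(L3)}, where the modulus of continuity must be transferred from $H$ to $L$ only locally in $q$. The key observation is that, for $|q|\leq R$, the supremum defining $L(x,q,\omega)$ is effectively attained on a bounded set $|p|\leq R'$, where $R'=R'(R)$ depends only on $R$ and $\overline{C}_1,\overline{\lambda}$. Indeed, from $\overline{C}_1^{-1}(|p|^{\overline{\lambda}}-1)\leq H(x,p,\omega)$ one finds that $p\cdot q-H(x,p,\omega)<0$ once $|p|>R'$ for suitable $R'$, while $L\geq 0$ at $p=0$ up to a constant; so competitors with $|p|>R'$ are irrelevant and one may write
\[
L(x,q,\omega)=\sup_{|p|\leq R'}\bigl\{p\cdot q-H(x,p,\omega)\bigr\}.
\]
Applying \textbf{(H3)} uniformly on $\{|p|\leq R'\}$ yields
\[
|L(x,q,\omega)-L(y,q,\omega)|\leq \sup_{|p|\leq R'}|H(x,p,\omega)-H(y,p,\omega)|\leq \overline{m}\bigl(\|-x\circ y\|_h(1+(R')^{\overline{\lambda}})\bigr),
\]
and defining $m_R(t):=\overline{m}\bigl(t(1+(R')^{\overline{\lambda}})\bigr)$ produces a concave, monotone increasing modulus with $m_R(0^+)=0$. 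The only delicate point is the argument that the relevant $p$ stay in a fixed ball; once that is set, the rest reduces to plugging in \textbf{(H3)}.
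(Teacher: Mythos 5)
Your proposal is correct and fills in, carefully, exactly the standard Legendre-duality computations that the paper's proof treats as immediate or refers to the literature (Cannarsa--Sinestrari for \textbf{(L3)}). In particular, your localization of the supremum to a compact $p$-ball depending only on $R$, $\overline{C}_1$, $\overline{\lambda}$ is precisely the key observation needed to transfer the modulus of continuity from $H$ to $L$, and the Young-inequality calculation for \textbf{(L2)} correctly produces the conjugate exponent $\lambda=\overline{\lambda}/(\overline{\lambda}-1)$; so this is the same approach, just written out.
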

\begin{proof}  Properties {\bf (L1)} and {\bf (L5)} follow immediately by  definition of $L=H^*$.
Property {\bf (L2)} comes trivially from {\bf (H2)} and \eqref{Lagrangian} taking $\lambda=\overline{\lambda}^{\;*}$, the conjugate exponent of $\overline \lambda$.
The proofs of  {\bf (L3)}  and {\bf (L4)} are also immediate: one can find a detailed proof of  {\bf (L3)}  in \cite[Theorem A.2.6]{CS} while {\bf (L4)} comes directly by the definition of $L=H^*$.
\end{proof}
\begin{ex}
\label{ExampleModelsLagrangian}
In the case of the model \eqref{model1}  the associated Lagrangian is 
\begin{equation}
\label{model1Lagrangian}
L(x,q,\omega)=
b(x,\omega)\frac{|q|^{\beta^*}}{\beta^*}+V(x,\omega),
\end{equation}
with $\beta^*=\frac{\beta}{\beta-1}$ and $b(x,\omega)=\frac{1}{a(x,\omega)^{\frac{1}{\beta -1}}}.$
\end{ex}

Here we state the main results of this paper. The proof will be given in Section \ref{HomogSection}.
\begin{theorem}\label{mainTH}
Consider the problem \eqref{ApproxPr} with  $g:\R^N\to \R$ bounded and Lipschitz continuous. Assume that the Hamiltonian 
$H(x,p,\omega)$ satisfies assumptions {\bf(H1)-(H4)} and for $L=H^*$
the following additional assumption holds:
\begin{description}
\item[{(L6)}] there exist  constants $ C>0$ and $ \lambda>1$ such that
$$
|L(x,p,\omega)-L(x,s\,p,\omega)|
\le C\big|1-|s|^\lambda\big| \,|L(x,p,\omega)|,
$$
for all $ s\in \R,\;x\in \R^N, p\in\R^m, \; \omega\in\Omega$.
\end{description}
Then the viscosity solutions $u^{\varepsilon}(t,x,\omega)$ of problem~\eqref{ApproxPr} converge locally uniformly in $x$ and $t>0$ and a.s. in $\omega\in \Omega$ to the unique solution $u(t,x)$ of the deterministic problem \eqref{LimitProblem}, where  the effective Hamiltonian $\overline H(q)$ is defined as $\overline H(q)= \overline L ^*(q)$ and $\overline L(q)$ is the effective Lagrangian defined by limit \eqref{EffectiveLagrangian}.
\end{theorem}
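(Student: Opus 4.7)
The plan is to deduce Theorem \ref{mainTH} from the variational representation of $u^\varepsilon$ provided by Theorem \ref{th1} combined with the almost sure convergence of the variational cost $L^\varepsilon$ established in Section \ref{HomogSection}. Starting from
\[ u^\varepsilon(t,x,\omega)=\inf_{y\in\R^N}\{g(y)+L^\varepsilon(x,y,t,\omega)\},\]
I would pass to the limit first inside $L^\varepsilon$ and then in the infimum over the endpoint $y$, identifying the limit with the Lax--Hopf representative of the unique viscosity solution of \eqref{LimitProblem}.

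For the upper bound I would pick any competitor $(y,\xi)$ for the limit problem; the convergence theorem from Section 6 (the full unconstrained variational convergence, i.e.\ Theorem \ref{convzero}) supplies an approximating sequence of horizontal competitors at level $\varepsilon$ whose $L^\varepsilon$-cost converges to $\int_0^t\overline L(\alpha^\xi(s))\,ds$, so that $\limsup_\varepsilon u^\varepsilon(t,x,\omega)\le g(y)+\int_0^t\overline L(\alpha^\xi)\,ds$; taking the infimum over $(y,\xi)$ gives $\limsup_\varepsilon u^\varepsilon(t,x,\omega)\le u(t,x)$. For the matching lower bound I would choose $\varepsilon$-quasi-minimizers $(y_\varepsilon,\xi_\varepsilon)$: the coercive estimate in (L2), combined with the boundedness of $g$, forces $\int_0^t|\alpha^{\xi_\varepsilon}|^\lambda\,ds$ to remain bounded and, via Lemmas \ref{aprroxCurveLemma} and \ref{ReelationDistances}, it also confines $y_\varepsilon$ to a compact subset of $\R^N$. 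After extraction of a converging subsequence, the liminf inequality of the Section 6 variational convergence would close the estimate.

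Local uniformity in $(t,x)$ will follow because the family $\{u^\varepsilon(\cdot,\cdot,\omega)\}_\varepsilon$ is equicontinuous on compact sets (a property I would import from the modulus and growth bounds on $L^\varepsilon$ proved in Section 4), so that pointwise a.s.\ convergence on a countable dense set in $(0,+\infty)\times \R^N$ upgrades by Arzel\`a--Ascoli to locally uniform a.s.\ convergence. The fact that the limit is deterministic is encoded in the deterministic character of $\overline L$, which comes from applying the ergodicity in (L4) along horizontal $\cX$-lines as in Section 5. Convexity and coercivity of $\overline H=\overline L^*$, inherited by Legendre duality from the corresponding properties of $\overline L$, give the comparison principle and hence uniqueness of the $BUC$ viscosity solution of \eqref{LimitProblem}; its Lax--Hopf representative is precisely the limit function I have just constructed.

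The real difficulty lies not in this last passage but in the variational convergence itself. The subadditive--ergodic scheme of \cite{S1,RT} does not apply directly because, under the anisotropic dilations $\delta_{1/\varepsilon}$, only horizontal curves of constant horizontal speed are invariant and these span merely an $m$-dimensional submanifold. The strategy I envision in Sections 5--6 is therefore in two stages: first prove convergence on the $m$-dimensional constrained class by adapting the classical subadditive/ergodic argument, then approximate an arbitrary horizontal curve by finite concatenations of constant-speed horizontal segments whose endpoints are corrected by short commutator paths furnished by the H\"ormander condition. Assumption (L6) is what will make this approximation quantitative, allowing reparametrisations and small rescalings of the velocity without losing control of the Lagrangian cost; this is the main technical obstacle and the sole source of the extra hypothesis (L6) beyond (H1)--(H4).
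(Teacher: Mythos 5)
Your proposal follows the paper's route essentially step for step: variational representation of $u^\varepsilon$ via Theorem~\ref{th1}, locally uniform a.s.\ convergence of $L^\varepsilon$ to $t\inf_\alpha\int_0^t\overline L(\alpha)$ via the two-stage (constrained, then unconstrained) argument of Sections~5--6, passage to the limit in the infimum over $y$, and identification of the limit with the Hopf--Lax representative of \eqref{LimitProblem}. Two remarks. First, a misreference: the ``full unconstrained variational convergence'' you invoke is Theorem~\ref{Thconvvar} (built from Lemmas~\ref{LemmaLiminfnuovo} and~\ref{LemmaLimsup}), not Theorem~\ref{convzero}, which only gives pointwise convergence under the constraint $y\in V_0$. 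Second, and more substantively, you treat the convexity of $\overline L$ as automatic (``inherited by Legendre duality''), but $\overline H=\overline L^*$ is convex regardless; what you actually need in order to write $\overline L=\overline H^*$ and then apply the Hopf--Lax formula of \cite{BCP} to characterize $\bar u$ as a viscosity solution of \eqref{LimitProblem} is that $\overline L$ itself be convex and superlinear. Superlinearity is Proposition~\ref{prop5}, but convexity of $\overline L$ is a genuine extra step in this degenerate setting: the standard Euclidean argument via the dynamic programming principle fails because one cannot in general find three points realizing a convex combination while simultaneously respecting the horizontal constraint. The paper handles this in Proposition~\ref{ConvexityTh} through an alternating $\cX$-line construction and the Baker--Campbell--Hausdorff formula to control the endpoint error; without flagging and supplying this step your plan does not close.
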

Now we give a class of operators where we can apply the previous result.
\begin{corollary}\label{cor}
Consider the problem \eqref{ApproxPr} with  $g:\R^N\to \R$ bounded and Lipschitz continuous. Assume that the Hamiltonian 
$H(x,p,\omega)$ satisfies assumptions {\bf(H1)-(H4)} and moreover 
\begin{equation}
\label{LambdaHomogeneity}
H(x,p,\omega)=H_1(x,p,\omega)+H_2(x,\omega),
\end{equation}
with $H_1(x,p,\omega)$ $\lambda$-homogeneous in $p$ (in the sense that $H_1(x,s\,p,\omega)=|s|^{\lambda}H^1(x,p,\omega)$). Then
the viscosity solutions $u^{\varepsilon}(t,x,\omega)$ of \eqref{ApproxPr} converge locally uniformly in $x$ and $t>0$ and a.s. in $\omega\in \Omega$ to the unique solution $\bar u(t,x)$ of the deterministic problem \eqref{LimitProblem}, where  the effective Hamiltonian $\overline H(q)$ is defined as $\overline H(q)= \overline L ^*(q)$ and $\overline L(q)$ is the effective Lagrangian defined by limit \eqref{EffectiveLagrangian}.
   \end{corollary}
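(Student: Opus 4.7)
The corollary will be proved by verifying that the extra assumption {\bf (L6)} of Theorem \ref{mainTH} is automatic for Hamiltonians of the special form \eqref{LambdaHomogeneity}, and then invoking Theorem \ref{mainTH} directly. Since $H_2(x,\omega)$ does not depend on $p$, the Legendre transform of $H$ splits as
$$L(x,q,\omega)=H^*(x,q,\omega)=H_1^*(x,q,\omega)-H_2(x,\omega)=:L_1(x,q,\omega)-H_2(x,\omega).$$
A standard rescaling in the supremum defining $H_1^*$ (substitute $p\mapsto s^{1/(\lambda-1)}p$ and use the $\lambda$-homogeneity of $H_1$), combined with the evenness of $H_1$ in $p$ (obtained by taking $s=-1$ in the homogeneity identity), shows that $L_1$ is $\lambda^*$-homogeneous in $q$, with $\lambda^*=\lambda/(\lambda-1)>1$. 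Moreover $\lambda>1$ forces $H_1(x,0,\omega)=0$, so convexity yields $H_1\ge 0$ and therefore $L_1\ge 0$.

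Using $\lambda^*$-homogeneity one computes directly
$$L(x,p,\omega)-L(x,sp,\omega)=L_1(x,p,\omega)-L_1(x,sp,\omega)=(1-|s|^{\lambda^*})\,L_1(x,p,\omega),$$
so $|L(x,p,\omega)-L(x,sp,\omega)|=|1-|s|^{\lambda^*}|\,L_1(x,p,\omega)$. Evaluating {\bf (H2)} at $p=0$ gives $|H_2(x,\omega)|=|H(x,0,\omega)|\le \overline{C}_1$, so $L_1=L+H_2$ differs from $L$ by a uniformly bounded term. Combining this with the coercive lower bound {\bf (L2)}, which forces $|L(x,p,\omega)|$ to grow like $|p|^{\lambda^*}$ for $|p|$ large, one can dominate $L_1(x,p,\omega)$ by a constant multiple of $|L(x,p,\omega)|$; this is precisely {\bf (L6)} with exponent $\lambda^*$. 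Theorem \ref{mainTH} then delivers the stated convergence to the unique solution of \eqref{LimitProblem}.

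The main obstacle lies in the small-$|p|$ regime, where both $L_1$ and $|L|$ are of order $1$ and coercivity alone does not give the multiplicative control required by {\bf (L6)}: one must rely on the uniform bound on $H_2$ inherited from {\bf (H2)} and verify that the residual $O(1)$ discrepancy between $L_1$ and $L$ is compatible with the multiplicative estimate in {\bf (L6)}, possibly by enlarging the constant $C$. Once this is handled, the rest is a direct application of Theorem \ref{mainTH}.
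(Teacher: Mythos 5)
Your proof follows the same plan as the paper: compute $L = H_1^* - H_2$, note that $L_1 := H_1^*$ is $\lambda^*$-homogeneous, and conclude that {\bf (L6)} holds so that Theorem \ref{mainTH} applies. The paper's own argument is very terse ("the associated Lagrangian has the same structure\ldots and such a structure for $L$ implies assumption {\bf (L6)}"), so your expansion of the Legendre-transform computation and the verification of $L_1\ge 0$ are correct and useful.

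However, the final step — verifying {\bf (L6)}, i.e.\ that $L_1(x,p,\omega)\le C\,|L(x,p,\omega)|$ for all $p$ — has a genuine gap which "enlarging the constant $C$" does not close. You correctly identify that the problematic regime is where $|p|$ is of order one. There, $L_1(x,p,\omega)>0$ (for $p\neq 0$), while $L = L_1 - H_2$ may vanish or change sign, since $H_2$ has no sign constraint and {\bf (L2)} permits $L$ to be negative for small $|q|$. At a zero of $L$, the required inequality reads $0<L_1\le C\cdot 0$, which is false for \emph{any} $C$. So no choice of constant rescues the estimate; a qualitative normalization is needed, not a quantitative one. The correct fix, which the paper invokes implicitly and makes explicit in the proof of Lemma \ref{NDrem}, is to note that one may add a constant to $L$ without changing the homogenization problem (adding $c$ to $L$ adds $ct$ to $u^\varepsilon$) and thereby assume w.l.o.g.\ that $L\ge c_0>0$ uniformly. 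With this normalization in place, $L_1 = L + H_2 \le L + \|H_2\|_\infty \le L\bigl(1+\|H_2\|_\infty/c_0\bigr)$, and {\bf (L6)} (with exponent $\lambda^*$) follows immediately. Citing that normalization is the missing ingredient in your argument.
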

   \begin{proof}[Corollary \ref{cor}]
    We need only to remark that, whenever $H$ satisfies \eqref{LambdaHomogeneity}, then the associated Lagrangian has the same structure (by taking $\lambda=\overline{\lambda}^*$), and such a structure for $L$ implies  assumption {\bf (L6)}. 
   Hence Theorem \ref{mainTH} immediately implies the result.
   \end{proof}
\begin{example}
In particular our main model of Hamiltonian~\eqref{model1}  satisfies the assumptions of  Corollary \ref{cor}.
   \end{example}

\section{Properties of $L^{\varepsilon}(x,y,t,\omega)$.}
\label{SectionEstimates}
In this section we will investigate several properties for the variational problem $L^{\varepsilon}(x,y,t,\omega)$ defined by \eqref{Lepsilon}.
\begin{lemma}\label{lemma71}
Under assumption {\bf(L2)}
 then
\begin{equation*}
L^\varepsilon(x,y,t,\omega)\leq C_1 t +C_1\frac{d_{CC}(x,y)^\lambda}{t^{\lambda-1}}, \qquad \forall x,y\in \R^N,t>0,\omega\in \Omega,
\end{equation*}
where $C_1$ and $\lambda$ are the constants introduced in assumption {\bf(L2)}.
\end{lemma}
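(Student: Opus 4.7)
The strategy is to plug in an explicit, near-optimal competitor horizontal curve into the definition of $L^\varepsilon$ and use the growth bound \textbf{(L2)} on the Lagrangian. Because the upper bound is independent of $\varepsilon$, the position argument inside $L$ plays no role; only the horizontal velocity matters.

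First I would invoke \textbf{(L2)} to bound the integrand pointwise:
\[
L(\delta_{1/\varepsilon}(\xi(s)),\alpha^\xi(s),\omega)\le C_1\bigl(|\alpha^\xi(s)|^\lambda+1\bigr),
\]
so for any admissible $\xi\in\cA^t_{y,x}$,
\[
L^\varepsilon(x,y,t,\omega)\le C_1 t+C_1\int_0^t|\alpha^\xi(s)|^\lambda\,ds.
\]
It remains to exhibit an admissible $\xi$ from $y$ to $x$ in time $t$ whose horizontal velocity is nearly minimal in the $L^\lambda$ sense. The natural choice is a constant-speed reparametrization of an almost-geodesic.

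The construction goes as follows. Fix $\delta>0$ and choose, by definition of $d_{CC}$, a horizontal curve $\eta:[0,T]\to\R^N$ with $\eta(0)=y$, $\eta(T)=x$ and $\int_0^T|\alpha^\eta(s)|\,ds\le d_{CC}(x,y)+\delta$. After a preliminary reparametrization (using Lemma \ref{leftinv}(ii)) I may assume $\eta$ is parametrized with constant horizontal speed, so $|\alpha^\eta(s)|\equiv D/T$ with $D:=\int_0^T|\alpha^\eta|\,ds\le d_{CC}(x,y)+\delta$. Now apply Lemma \ref{leftinv}(ii) again with $C:=T/t$: the curve $\xi(s):=\eta(Cs)$ is horizontal, satisfies $\xi(0)=y$, $\xi(t)=\eta(T)=x$, hence belongs to $\cA^t_{y,x}$, and has horizontal velocity $\alpha^\xi(s)=C\,\alpha^\eta(Cs)$ of constant modulus $|\alpha^\xi(s)|=D/t$.

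Inserting this $\xi$ into the previous estimate gives
\[
L^\varepsilon(x,y,t,\omega)\le C_1 t+C_1\Bigl(\frac{D}{t}\Bigr)^{\!\lambda} t
= C_1 t+C_1\frac{D^\lambda}{t^{\lambda-1}}
\le C_1 t+C_1\frac{(d_{CC}(x,y)+\delta)^\lambda}{t^{\lambda-1}}.
\]
Sending $\delta\to 0^+$ yields the claim.

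The proof is essentially routine once the reparametrization tools from Lemma \ref{leftinv} are available; the only mild subtlety is ensuring the almost-geodesic can be taken with \emph{constant} horizontal speed (otherwise Jensen's inequality would be needed to pass from the $L^1$ bound on $|\alpha^\eta|$ to the desired $L^\lambda$ bound, giving the same result). This is harmless because the time-change in Lemma \ref{leftinv}(ii) allows us to rescale any horizontal curve to constant speed without leaving the admissible class.
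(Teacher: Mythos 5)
Your proof is correct and follows essentially the same approach as the paper: insert a constant-speed near-geodesic from $y$ to $x$ in time $t$ into the variational formula and apply the upper bound in \textbf{(L2)}. The only difference is that you work with an almost-geodesic plus a $\delta\to 0^+$ limit, whereas the paper assumes a geodesic exists and uses it directly; your version is marginally more careful but the argument is the same.
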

\begin{proof}
We consider a geodesic $\eta$ from $y$ to $x$ in time $t$ (parametrized by arc-length), then $\eta\in {\cal A}^t_{y,x}$ with $|\alpha^\eta(s)|=d_{CC}(x,y)/t$. By assumption {\bf(L2)} we have
\[
L^\varepsilon(x,y,t,\omega)\leq\int_0^t L(\delta_{1/\varepsilon}(\eta(s),\alpha^\eta(s), \omega)\, ds\leq \int_0^t C_1(1+d_{CC}(x,y)^\lambda t^{-\lambda})\, ds,
\]
which easily entails the statement.
\end{proof}
\begin{proposition}\label{prp71}
Under assumption {\bf(L2)} then
 \begin{equation}
L^\varepsilon(x,y,t,\omega)=\inf\limits_{\eta}\left\{\int_0^t L(\delta_{1/\varepsilon}(\eta(s)),\alpha^\eta(s),\omega)\, ds \right\},
 \end{equation}
 where the infimum is taken over the curves
 \begin{equation}\label{7.1}
   \eta\in{\cal A}^t_{y,x} \qquad \textrm{such that }\|\alpha^\eta\|_{L^\lambda(0,t)}\leq \widetilde{C},
\end{equation}   
 where $\widetilde{C}= [(C_1^2+C_1+1)t^\lambda +C_1^2 d_{CC}(x,y)^\lambda]^{1/\lambda}t^{\frac{1}{\lambda}-1}$.\end{proposition}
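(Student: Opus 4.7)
The plan is to show that restricting the infimum to the stated class of curves does not change its value, by combining the upper bound on $L^\varepsilon$ provided by Lemma \ref{lemma71} with the lower bound on the cost of any admissible curve coming from assumption \textbf{(L2)}.

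First I would denote by $L^\varepsilon_{\mathrm{restr}}(x,y,t,\omega)$ the infimum of the cost functional taken only over curves $\eta\in\cA^t_{y,x}$ satisfying $\|\alpha^\eta\|_{L^\lambda(0,t)}\leq\widetilde C$. Since we are infimizing over a smaller set, trivially $L^\varepsilon\leq L^\varepsilon_{\mathrm{restr}}$, so the task is to prove the reverse inequality. For this it suffices to show that any curve $\xi\in\cA^t_{y,x}$ with $\|\alpha^\xi\|_{L^\lambda(0,t)}>\widetilde C$ has cost strictly larger than the upper bound for $L^\varepsilon$ given by Lemma \ref{lemma71}, and hence cannot contribute to the infimum.

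The key computation is then a direct application of the lower bound in \textbf{(L2)}: if $\|\alpha^\xi\|_{L^\lambda(0,t)}^\lambda>\widetilde C^\lambda=(C_1^2+C_1+1)t+C_1^2 d_{CC}(x,y)^\lambda t^{1-\lambda}$, then
\[
\int_0^t L(\Del(\xi(s)),\alpha^\xi(s),\omega)\,ds\;\geq\;C_1^{-1}\|\alpha^\xi\|_{L^\lambda(0,t)}^\lambda-C_1^{-1}t\;>\;(C_1+1)t+C_1\frac{d_{CC}(x,y)^\lambda}{t^{\lambda-1}},
\]
which strictly exceeds $C_1 t + C_1 d_{CC}(x,y)^\lambda/t^{\lambda-1}\geq L^\varepsilon(x,y,t,\omega)$ from Lemma \ref{lemma71}. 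Consequently such a $\xi$ cannot be part of any minimizing sequence, so the infimum over curves satisfying \eqref{7.1} coincides with $L^\varepsilon(x,y,t,\omega)$.

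There is no real obstacle here; the only point requiring a little care is matching the constants so that the ``slack'' $t$ in the inequality above is large enough to absorb the $C_1^{-1}t$ coming from the $-1$ term in the lower bound of \textbf{(L2)}, which is exactly the role played by the extra summand $+1$ inside $\widetilde C^\lambda$. Once the constant $\widetilde C$ is calibrated as in the statement, the argument is essentially a one-line comparison between the upper bound from Lemma \ref{lemma71} and the coercivity lower bound.
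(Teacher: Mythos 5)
Your argument is correct and is, up to contrapositive, exactly the paper's proof: the paper picks a curve $\eta$ whose cost is within $t$ of the infimum, then runs the same comparison between the coercivity bound from \textbf{(L2)} and the upper bound of Lemma~\ref{lemma71} to deduce $\|\alpha^\eta\|_{L^\lambda}\leq\widetilde C$; you state the same implication in reverse, showing that violating the bound forces the cost above $L^\varepsilon+t$. The constants match and the reasoning is sound.
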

\begin{proof}
  Using the definition of $L^\varepsilon(x,y,t,\omega)$ as greatest lower bound, then there exists  a curve $\eta\in{\cal A}^t_{y,x}$ 
  such that 
\[
\int_0^t L(\delta_{1/\varepsilon}(\eta(s)), \alpha^\eta(s),\omega)\, ds\leq L^\varepsilon(x,y,t,\omega) +t.
\]  
By the bound from below in {\bf(L2)} and using Lemma~\ref{lemma71}, we have
\begin{equation*}
C_1^{-1}\int_0^t\left(|\alpha^\eta (s)|^\lambda-1 \right)ds\leq L^\varepsilon(x,y,t,\omega) +t
\leq (C_1+1)t +C_1 \frac{d_{CC}(x,y)^\lambda}{t^{\lambda-1}},
\end{equation*}  
and consequently
\begin{equation*}
\|\alpha^\eta\|_{L^\lambda(0,t)}^\lambda\leq (C_1^2+C_1+1)t +C_1^2 \frac{d_{CC}(x,y)^\lambda}{t^{\lambda-1}},
\end{equation*}
which is equivalent to relation~\eqref{7.1}.
 \end{proof}

\begin{corollary} 
\label{corollary71}
Under assumption {\bf (L2)} then the infimum in $L^{\varepsilon}(x,y,t,\omega)$ is attained over  admissible curves $\eta$ such that
 \begin{description}
\item[(i)]  $ \|\alpha^\eta\|_{L^{\gamma}(0,t)}\leq C_2$, for all $ 1\leq \gamma\leq \lambda$ and  for all $t<T$, where $\lambda$ is the constant given in assumption {\bf (L2)} and $C_2$  depends only on  the constants in assumption {\bf(L2)} and the bound $T$,
\item[(ii)]$ \|\eta\|_{\infty}\leq  C_3$, where  $C_3$  depends only on $\sigma(x)$, the constants in assumption {\bf(L2)} and $T$ for all $t<T$.
 \end{description}

 \end{corollary}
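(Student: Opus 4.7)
The plan is to split the work into three steps: derive (i) from Proposition~\ref{prp71} by Hölder, deduce (ii) from the polynomial structure of $\sigma$ in exponential coordinates, then obtain attainment by a direct-method compactness argument over the class of curves cut out by (i)--(ii).

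For (i), Proposition~\ref{prp71} already restricts the infimum defining $L^{\varepsilon}$ to curves with $\|\alpha^\eta\|_{L^\lambda(0,t)}\le\widetilde C$, where $\widetilde C$ is explicit in $t$, $d_{CC}(x,y)$ and the constants in \textbf{(L2)}. Hölder's inequality then gives $\|\alpha^\eta\|_{L^\gamma(0,t)}\le t^{1/\gamma-1/\lambda}\widetilde C$ for every $1\le\gamma\le\lambda$, and since $t<T$ this yields (i). For (ii), I would exploit the form $\sigma(x)=(I_{m\times m}\mid A(x))$ from~\eqref{matrixC}, in which the entries of $A$ are polynomials depending only on the first-layer coordinates $x^{(1)}$. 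Writing $\eta=(\eta^{(1)},\dots,\eta^{(r)})$ along the stratification, the first-layer equation $\dot\eta^{(1)}=\alpha^\eta$ gives $\|\eta^{(1)}\|_\infty\le|y^{(1)}|+\|\alpha^\eta\|_{L^1(0,t)}$, controlled via (i) with $\gamma=1$. For $j\ge 2$, one has $|\dot\eta^{(j)}|\le P_j(|\eta^{(1)}|)\,|\alpha^\eta|$ with $P_j$ a polynomial, so integrating and using the $L^\infty$ bound on $\eta^{(1)}$ together with $\|\alpha^\eta\|_{L^1}\le C_2$ yields a uniform bound $\|\eta^{(j)}\|_\infty\le C_3$.

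For attainment I would take a minimizing sequence $(\eta_n)$ in the class cut out by (i)--(ii). Reflexivity of $L^\lambda(0,t;\R^m)$ (recall $\lambda>1$) provides a subsequence with $\alpha^{\eta_n}\rightharpoonup\alpha^*$. By (ii), the vector fields $X_i$ are uniformly bounded on a compact set containing all $\eta_n$, hence $|\dot\eta_n|\le C|\alpha^{\eta_n}|$, and Hölder gives a uniform modulus of continuity $|\eta_n(s)-\eta_n(r)|\le C|s-r|^{1-1/\lambda}$; Ascoli--Arzel\`a then produces $\eta^*$ with $\eta_n\to\eta^*$ uniformly along a further subsequence. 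Passing to the limit in $\eta_n(s)=y+\int_0^s\sigma(\eta_n(\tau))^T\alpha^{\eta_n}(\tau)\,d\tau$---splitting the integrand by $\pm\,\sigma(\eta^*)^T\alpha^{\eta_n}$ and using weak convergence in the first piece with uniform convergence of $\sigma(\eta_n)$ in the second---shows that $\eta^*$ is horizontal with velocity $\alpha^*$ and connects $y$ to $x$, and by construction $\eta^*$ still satisfies (i)--(ii).

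The main technical step is then the lower-semicontinuity inequality
$\liminf_n\int_0^t L(\delta_{1/\varepsilon}(\eta_n),\alpha^{\eta_n},\omega)\,ds\ge\int_0^t L(\delta_{1/\varepsilon}(\eta^*),\alpha^*,\omega)\,ds$
under this mixed (weak/uniform) convergence. This is the classical Ioffe/De~Giorgi setting and follows from the convexity of $L(x,\cdot,\omega)$ in \textbf{(L1)} combined with the continuity of $L(\cdot,q,\omega)$ from \textbf{(L3)}, used uniformly on the bounded set where $\delta_{1/\varepsilon}(\eta_n)$ sits for $\varepsilon$ fixed. This makes $\eta^*$ a minimizer satisfying (i) and (ii), concluding the proof.
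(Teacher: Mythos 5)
Your argument for (i) and (ii) is correct and matches the paper's (very terse) route: Proposition~\ref{prp71} plus H\"older's inequality for (i), and for (ii) the integral identity $\eta(s)=\eta(0)+\int_0^s\sigma(\eta(\tau))\alpha^\eta(\tau)\,d\tau$ together with the stratified form \eqref{matrixC} of $\sigma$, unwound layer by layer (first layer bounded directly, higher layers by the polynomial bootstrap). In fact you make explicit the nilpotent/triangular structure that the paper's one-line proof only implicitly invokes; one does need this structure, since $\sigma$ is polynomial and a blind Gronwall estimate would not close.

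Where you genuinely diverge is the third step. The paper's corollary, like Proposition~\ref{prp71} whose wording it copies, is asserting only that the infimum defining $L^\varepsilon$ may be restricted to curves obeying the a~priori bounds (i)--(ii); its proof makes no claim that a minimizer exists, and indeed attainment is not deducible from \textbf{(L2)} alone. Your direct-method argument (weak compactness in $L^\lambda$, Ascoli--Arzel\`a, the $\pm\sigma(\eta^*)^T\alpha^{\eta_n}$ splitting to pass to the limit in the ODE, and Ioffe-type lower semicontinuity) is correct as far as it goes, but it imports the additional hypotheses \textbf{(L1)} and \textbf{(L3)}, which the corollary does not assume. So you are proving a stronger statement than the paper states and under more assumptions than the paper lists. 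This is not a gap in your mathematics; it is a mismatch of scope. It does however touch a real loose end in the paper, since Step~1 of Lemma~\ref{LemmaLiminfnuovo} simply ``assumes'' minimizers exist, and your compactness argument is exactly what would be needed to justify that remark (or one could work with $\delta$-minimizers there, as the paper implicitly intends, avoiding attainment altogether).

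One small caveat to flag on the constants: $\widetilde C$ from Proposition~\ref{prp71} depends on $d_{CC}(x,y)$ as well as on $t$ and the \textbf{(L2)} constants, so $C_2$ and $C_3$ cannot be independent of $x,y$ as the corollary's wording suggests; a uniform bound requires $x,y$ to range in a compact set. This imprecision is in the paper's statement, not an error you introduced, but your write-up should say so explicitly rather than reproduce it.
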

\begin{proof} This follows easily by Proposition \ref{prp71}, the standard embedding properties of $L^\lambda (0,t)$ and 
$\xi(t)=\xi(0)+\int_0^t\sigma(\xi(s))\alpha^\xi(s) ds.$
\end{proof}

We now prove the uniform continuity of $L^\varepsilon$, uniformly w.r.t. $\varepsilon>0$. To this purpose, we adapt some arguments from  \cite{RT}. 

\begin{lemma}\label{lemma41}
Under assumption {\bf(L2)}, then
\begin{equation}\label{43}
L^{\varepsilon}(x,y,t+h,\omega)\leq L^{\varepsilon}(x,y,t,\omega)+C_1h,  
 \end{equation}
for all 
$ \varepsilon>0, x,y\in \R^N,t>0,\,\omega\in \Omega$ and where $C_1$ is the constant introduced in {\bf(L2)}.
\end{lemma}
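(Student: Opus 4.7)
The plan is to prove this monotonicity-type bound by extending a near-optimal curve for the time-$t$ problem with a stationary piece at the endpoint, and then estimating the extra cost using only the upper bound in \textbf{(L2)} applied at the zero velocity.

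More concretely, fix $\varepsilon>0$, $x,y\in\R^N$, $t>0$, $h>0$ and $\omega\in\Omega$. By Corollary \ref{corollary71} the infimum defining $L^{\varepsilon}(x,y,t,\omega)$ is attained, so I may pick $\xi^{*}\in \cA^{t}_{y,x}$ with
$$
L^{\varepsilon}(x,y,t,\omega)=\int_{0}^{t} L\bigl(\Del(\xi^{*}(s)),\alpha^{\xi^{*}}(s),\omega\bigr)\,ds.
$$
(Even if one did not want to invoke attainment, exactly the same argument works for an $\eta$-minimizer and one lets $\eta\to 0$ at the end.) Now I define an extended curve $\eta:[0,t+h]\to\R^N$ by
$$
\eta(s):=\begin{cases} \xi^{*}(s), & s\in[0,t],\\ x, & s\in[t,t+h]. \end{cases}
$$
This $\eta$ is absolutely continuous, satisfies $\eta(0)=y$, $\eta(t+h)=x$, and is horizontal with horizontal velocity $\alpha^{\eta}(s)=\alpha^{\xi^{*}}(s)$ for $s\in[0,t]$ and $\alpha^{\eta}(s)=0$ for $s\in[t,t+h]$; in particular $\eta\in\cA^{t+h}_{y,x}$.

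Plugging this $\eta$ into the definition of $L^{\varepsilon}(x,y,t+h,\omega)$ and splitting the integral over $[0,t]$ and $[t,t+h]$ gives
$$
L^{\varepsilon}(x,y,t+h,\omega)\le L^{\varepsilon}(x,y,t,\omega)+\int_{t}^{t+h} L\bigl(\Del(x),0,\omega\bigr)\,ds.
$$
For the remaining integral I use the upper bound in \textbf{(L2)} with $q=0$, which yields $L(\Del(x),0,\omega)\le C_{1}(0+1)=C_{1}$ uniformly in $x,\varepsilon,\omega$. Hence the extra term is bounded by $C_{1}h$, which proves \eqref{43}.

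I do not foresee any real obstacle here: the only subtlety is to observe that the constant curve at $x$ is a legitimate horizontal curve (its horizontal velocity is the zero vector, and hence Definition \ref{horcurv} is satisfied trivially), so that the concatenation $\eta$ lies in $\cA^{t+h}_{y,x}$. The bound is then purely a one-sided consequence of the fact that the Lagrangian is uniformly bounded at $q=0$ thanks to the coercive upper bound in \textbf{(L2)}.
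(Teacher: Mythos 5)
Your proof is correct and follows essentially the same approach as the paper: extend an (near-)optimal curve for the time-$t$ problem by holding it constant at $x$ over $[t,t+h]$, observe that the extension has zero horizontal velocity there, and bound the extra cost by $C_1 h$ via the upper bound in \textbf{(L2)} at $q=0$. The only cosmetic difference is that the paper works with an arbitrary admissible path and takes the infimum at the end rather than invoking attainment of the minimum, but the argument is the same.
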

\begin{proof}
We  proceed as in \cite{RT}.
Let $\xi$ be an admissible path for $L^{\varepsilon}(x,y,t,\omega)$; we introduce 
\[
\xi_1(s):=\left\{\begin{array}{ll}
\xi(s) &0\leq s\leq t,\\
x&t\leq s\leq t+h.
\end{array}\right.
\]
Note that $\xi_1$ is an admissible path for $L^{\varepsilon}(x,y,t+h,\omega)$. Hence, we have
\begin{eqnarray*}
L^{\varepsilon}(x,y,t+h,\omega)&\leq &\int_0^{t+h}L(\delta_{1/\varepsilon}(\xi_1(s)), \alpha^{\xi_1}(s),\omega)ds\\
&\leq & \int_0^tL(\delta_{1/\varepsilon}(\xi(s)), \alpha^\xi(s),\omega)ds +\int_t^{t+h}L(\delta_{1/\varepsilon}(x), 0,\omega)ds.
\end{eqnarray*}
Taking the infimum over $\xi$ and by assumption {\bf(L2)}, we obtain the statement. 
\end{proof}
\begin{lemma}\label{lemma2}
Under assumption {\bf(L2)}, then
\begin{equation}\label{4.2}
  L^{\varepsilon}(x\circ v,y,t+\|v\|_{CC},\omega) \leq L^{\varepsilon}(x,y,t,\omega)+ 2C_1 \|v\|_{CC},
\end{equation}
for all 
$ \varepsilon>0, x,y,v\in \R^N,t>0,\,\omega\in \Omega$ and
where $C_1$ is the constant introduced in assumption {\bf(L2)}.
Moreover, for each compact $K\subset \subset \R^N$, there exists a constant $C$ (depending only on $K$ and on the assumptions of the problem, so in particular independent of $\varepsilon$) such that 
\begin{equation}\label{4.3}
  L^{\varepsilon}(x\circ v,y,t+\|v\|_{h},\omega) \leq L^{\varepsilon}(x,y,t,\omega)+ C \|v\|_{h}\qquad \forall v\in K,
\end{equation}
and for all 
$ \varepsilon>0, x,y\in \R^N,t>0,\,\omega\in \Omega$.

\end{lemma}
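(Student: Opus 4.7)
The plan is to prove both inequalities by the device already used in Lemma~\ref{lemma41}: append to a nearly optimal path for the right-hand side a short horizontal curve joining $x$ to $x\circ v$. Fix $\xi\in\cA^t_{y,x}$ admissible for $L^\varepsilon(x,y,t,\omega)$. Since the vector fields $X_i$ are left-invariant, so is $d_{CC}$, hence $d_{CC}(x,x\circ v)=\|v\|_{CC}$, and there exists a horizontal geodesic $\gamma$ joining $x$ to $x\circ v$, parametrized by arc length on an interval of length $\|v\|_{CC}$, with $|\alpha^\gamma|\equiv 1$.

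For inequality \eqref{4.2} I will concatenate $\xi|_{[0,t]}$ with $\gamma$ translated to the interval $[t,t+\|v\|_{CC}]$; the resulting curve lies in $\cA^{t+\|v\|_{CC}}_{y,x\circ v}$. Assumption \textbf{(L2)} gives $L(\delta_{1/\varepsilon}(\gamma(s)),\alpha^\gamma(s),\omega)\le C_1(1+1)=2C_1$ pointwise on the appended piece, so its total cost is at most $2C_1\|v\|_{CC}$; taking the infimum over $\xi$ yields \eqref{4.2}.

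For inequality \eqref{4.3} the natural difficulty is that $\|v\|_h$ may be strictly smaller than $\|v\|_{CC}$, so one cannot simply attach a unit-speed geodesic and land at $x\circ v$ at the prescribed time $t+\|v\|_h$. The remedy is to invoke Lemma~\ref{ReelationDistances} to get $c_K>0$ with $c_K^{-1}\|v\|_h\le\|v\|_{CC}\le c_K\|v\|_h$ for all $v\in K$, and then to apply Lemma~\ref{leftinv}(ii) to reparametrize $\gamma$ on the interval $[t,t+\|v\|_h]$; the new horizontal velocity has constant Euclidean norm $\|v\|_{CC}/\|v\|_h\le c_K$. Assumption \textbf{(L2)} then bounds the Lagrangian pointwise by $C_1(c_K^\lambda+1)$ on the appended piece, so its cost is at most $C_1(c_K^\lambda+1)\|v\|_h$; infimizing over $\xi$ gives \eqref{4.3} with $C:=C_1(c_K^\lambda+1)$, a constant depending only on $K$ and on the constants in \textbf{(L2)}.

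No deeper obstacle is expected: the argument is the extension trick of Lemma~\ref{lemma41} combined with the distance comparison Lemma~\ref{ReelationDistances}. The only subtle point is the choice of reparametrization speed in \eqref{4.3}, which must be bounded uniformly in $v\in K$ and independently of $\varepsilon$; the equivalence of $d_h$ and $d_{CC}$ on compacts delivers exactly this, and explains why the statement of \eqref{4.3} is unavoidably restricted to $v$ in a compact set.
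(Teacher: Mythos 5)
Your proof is correct and follows essentially the same route as the paper: concatenate an admissible path with a geodesic from $x$ to $x\circ v$, parametrized by arc length for \eqref{4.2} and reparametrized over a time interval of length $\|v\|_h$ for \eqref{4.3}, then invoke Lemma~\ref{ReelationDistances} to bound the resulting speed $\|v\|_{CC}/\|v\|_h$ uniformly on compacts. The constants and the role of \textbf{(L2)} match the paper's argument.
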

\begin{proof}
For any curve $\xi \in {\cal A}^t_{y,x}$, we define
\[
\eta_\xi(s):=\left\{\begin{array}{ll}
\xi(s)&\quad 0\leq s\leq t\\
\tilde\gamma (s)&\quad t\leq s\leq t+\|v\|_{CC}
\end{array}\right.
\]
where $\tilde\gamma (s)=\gamma(s-t)$ and $\gamma$ is a geodesic from $x$ to $x\circ v$ in time $\|v\|_{CC}$.
Since $\eta_\xi\in {\cal A}^{t+\norma{v}_{CC}}_{y,x\circ v}$,  then we have
\begin{eqnarray*}
&& L^{\varepsilon}(x\circ v,y,t+\|v\|_{CC},\omega) \leq
 \int_0^{t+\|v\|_{CC}}L(\delta_{1/\varepsilon}(\eta_\xi(s)),\alpha^{\eta_\xi}(s),\omega)ds\\
 &&\qquad = \int_0^{t}L(\delta_{1/\varepsilon}(\xi(s)),\alpha^\xi(s),\omega)ds+
 \int_t^{t+\|v\|_{CC}}L(\delta_{1/\varepsilon}(\tilde \gamma(s)),\alpha^{\tilde \gamma}(s),\omega)ds\\
&&\qquad \leq \int_0^{t}L(\delta_{1/\varepsilon}(\xi(s)),\alpha^\xi(s),\omega)ds+ 2C_1 \|v\|_{CC},
\end{eqnarray*}
where the last inequality is due to assumption {\bf(L2)} and  the fact that by arc-length parametrisation we can assume $|\alpha^{\tilde\gamma}(s)|=1$. Taking the infimum over $\xi$, we get the  bound \eqref{4.2}.

It remains to prove \eqref{4.3}.
We argue as before defining
\[
\eta_\xi(s):=\left\{\begin{array}{ll}
\xi(s)&\quad 0\leq s\leq t\\
\tilde\gamma_1 (s)&\quad t\leq s\leq t+\|v\|_{h}
\end{array}\right.
\]
where $\tilde\gamma_1 (s):=\gamma_1 (s-t)$ and $\gamma_1$ is a geodesic from $x$ to $x\circ v$ in time $\|v\|_{h}$. We have $|\alpha^{\gamma_1}(s)|=\|v\|_{CC}/\|v\|_{h}$. Hence, as before, we get
\begin{equation*}
L^{\varepsilon}(x\circ v,y,t+\|v\|_{h},\omega) \leq
\int_0^{t}L(\delta_{1/\varepsilon}(\xi(s)),\alpha^\xi(s),\omega)ds+
C_1\|v\|_{h}\left(1+\frac{\|v\|_{CC}^\lambda}{\|v\|_{h}^\lambda}\right).
\end{equation*}
Recall that for each compact $K\subset\subset\R^N$, there exists a constant $c$ such that $c^{-1}\|v\|_{h}\leq \|v\|_{CC}\leq c\|v\|_{h}$ (see Lemma \ref{ReelationDistances}); hence
\[
L^{\varepsilon}(x\circ v,y,t+\|v\|_{h},\omega) \leq
\int_0^{t}L(\delta_{1/\varepsilon}(\xi(s)),\alpha^\xi(s),\omega)ds+ C_1(1+c^\lambda)\|v\|_{h}
\]
and, taking the infimum over $\xi$, we get \eqref{4.3}.
\end{proof}

\begin{lemma}\label{prop1} Under assumptions {\bf(L2)} and {\bf(L6)}, then
$L^{\varepsilon}(x,y,t,\omega)$ is locally uniformly continuous in $t$ away from $0$, locally uniformly w.r.t. $x$ and $y$ and uniformly w.r.t. $\varepsilon$. More precisely
for any $\delta\in(0,1)$ there exists  $C_{\delta}>0$
  (depending only on the constants in assumptions {\bf(L2)} and {\bf(L6)} and going to $+\infty$ as  $\delta\to 0^+$)  such that
\begin{equation}\label{contunift}
|L^{\varepsilon}(x,y,t+h,\omega)-L^{\varepsilon}(x,y,t,\omega)|\leq C_{\delta}h\quad \forall \varepsilon>0,
\end{equation}
and for any $t, t+h \in [\delta, 1/\delta]=:I_{\delta}$, $h>0$ and for any 
$(x,y) \in A_{\delta}$ where  $$A_{\delta}:=\{ (x,y)\in\re^N\times \R^N\,|\, d_{CC}(x,y)<1/\delta\}.$$
\end{lemma}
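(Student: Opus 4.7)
The plan is to establish the two inequalities $L^{\varepsilon}(x,y,t+h,\omega)-L^{\varepsilon}(x,y,t,\omega)\le C_\delta h$ and $L^{\varepsilon}(x,y,t,\omega)-L^{\varepsilon}(x,y,t+h,\omega)\le C_\delta h$ separately. The first direction is already given by Lemma \ref{lemma41} with constant $C_1$ (which uses only (L2)), so the entire content of the lemma is the reverse direction, where (L6) enters.

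For the reverse direction, the natural idea is to transport an admissible curve on $[0,t+h]$ to one on $[0,t]$ by time rescaling. Fix an $h$--almost minimizer $\xi\in \mathcal{A}^{t+h}_{y,x}$ for $L^{\varepsilon}(x,y,t+h,\omega)$, and set $\eta(s):=\xi\bigl(\tfrac{t+h}{t}s\bigr)$ for $s\in[0,t]$. By Lemma \ref{leftinv}(ii), $\eta$ is horizontal with $\alpha^{\eta}(s)=\tfrac{t+h}{t}\alpha^{\xi}(\tfrac{t+h}{t}s)$, and $\eta\in \mathcal{A}^t_{y,x}$. After the change of variables $u=\tfrac{t+h}{t}s$,
$$L^{\varepsilon}(x,y,t,\omega)\le \frac{t}{t+h}\int_0^{t+h} L\Bigl(\Del(\xi(u)),\tfrac{t+h}{t}\alpha^{\xi}(u),\omega\Bigr)\,du.$$
Applying (L6) with $s=\tfrac{t+h}{t}$ to the integrand and using $\bigl|1-(1+h/t)^\lambda\bigr|\le K_\delta h$ (a mean value bound valid for $t\ge\delta$ and $h/t$ bounded, as is the case on $I_\delta$), we get
$$L^{\varepsilon}(x,y,t,\omega)\le \frac{t}{t+h}\int_0^{t+h} L(\Del(\xi),\alpha^{\xi},\omega)\,du+ C K_\delta h\cdot \frac{t}{t+h}\int_0^{t+h}\bigl|L(\Del(\xi),\alpha^{\xi},\omega)\bigr|\,du.$$

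Three ingredients then close the estimate. First, the first term is bounded by $\tfrac{t}{t+h}\bigl(L^{\varepsilon}(x,y,t+h,\omega)+h\bigr)$. Second, by Proposition \ref{prp71} we may restrict $\xi$ to the class $\|\alpha^{\xi}\|_{L^\lambda(0,t+h)}^\lambda\le \widetilde C$ with $\widetilde C$ controlled by $\delta$ on $I_\delta\times A_\delta$, so combining with the upper bound in (L2) gives $\int_0^{t+h}|L(\Del(\xi),\alpha^{\xi},\omega)|\,du\le C'_\delta$, uniformly in $\varepsilon$ and $\omega$. Third, Lemma \ref{lemma71} and the lower bound in (L2) yield $|L^{\varepsilon}(x,y,t+h,\omega)|\le \widetilde C_\delta$ on $I_\delta\times A_\delta$, which allows us to write $\tfrac{t}{t+h}L^{\varepsilon}(x,y,t+h,\omega)=L^{\varepsilon}(x,y,t+h,\omega)-\tfrac{h}{t+h}L^{\varepsilon}(x,y,t+h,\omega)$ with the last piece of size $O(h/\delta)$. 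Collecting these estimates produces $L^{\varepsilon}(x,y,t,\omega)\le L^{\varepsilon}(x,y,t+h,\omega)+C_\delta h$ with $C_\delta$ depending only on the constants of (L2), (L6), and $\delta$.

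The main obstacle is the bookkeeping that ensures that \emph{all} the arising constants (the Lipschitz bound on $s\mapsto s^\lambda$ at $s=1$, the a priori $L^\lambda$ bound on almost-minimizers, and the pointwise bound on $|L^{\varepsilon}|$) blow up only as $\delta\to 0^+$ and are uniform in $\varepsilon$, $\omega$, and in $(x,y)\in A_\delta$. The role of (L6) here is essential: without a quantitative control of how $L$ responds to a multiplicative perturbation of its momentum argument, the time rescaling of an almost-minimizer produces an error which a priori depends on the (large) values of $|\alpha^{\xi}|^\lambda$, and the proportionality of that error to $h$ is exactly what (L6), combined with the $L^\lambda$ bound from Proposition \ref{prp71}, provides.
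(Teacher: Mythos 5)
Your proposal is correct and takes essentially the same route as the paper: one direction from Lemma~\ref{lemma41}, and for the other the time rescaling $\eta(s)=\xi(\tfrac{t+h}{t}s)$ of an almost-minimizer (via Lemma~\ref{leftinv}(ii)), a change of variables, an application of \textbf{(L6)} to the dilated momentum argument, and the a priori $L^\lambda$ bound of Proposition~\ref{prp71} together with \textbf{(L2)} to make all the error terms $O(h)$ with a $\delta$-dependent constant. The only cosmetic difference is that the paper works with a $t$-almost minimizer restricted to the class $\|\alpha^\eta\|_{L^\lambda}\le\widetilde C$ (as supplied directly by Proposition~\ref{prp71}) and passes to the infimum at the very end, while you fix an $h$-almost minimizer from the start; both variants yield the same estimate since an $h$-almost minimizer automatically inherits the $L^\lambda$ bound on $I_\delta\times A_\delta$.
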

\begin{proof}
By Lemma \ref{lemma41} we know that for any $h>0$
$$L^{\varepsilon}(x,y,t+h,\omega)-L^{\varepsilon}(x,y,t,\omega)\leq C_1h.$$
It remains to show the opposite inequality, i.e. 
\begin{equation}\label{9settembre}
L^{\varepsilon}(x,y,t+h,\omega)-L^{\varepsilon}(x,y,t,\omega)\geq C_{\delta}h.
\end{equation}  
Take $(x,y)\in A_{\delta}$ and a curve $\eta$ admissible for $L^{\varepsilon}(x,y,t+h,\omega)$ and such that $\|\eta\|_{L^\lambda(0,t+h)}\leq \overline{C}$ where $\overline{C}=\overline{C}(\delta)$ is the constant introduced in Proposition~\ref{prp71}. 
We  define
$\xi_{\eta}(s)= \eta(\frac{t+h}{t} s)$. By Lemma~\ref{leftinv}, $\xi_{\eta}(s)$ is still horizontal with 
$$\alpha^{\xi_{\eta}}(s) = \frac{t+h}{t}\alpha^{\eta}\left(\frac{t+h}{t}s\right), \;\xi_{\eta}(0)=\eta(0)=y, \;\xi_{\eta}(t)= \eta\left(\frac{t+h}{t}t\right)=x;$$ 
so, $\xi_{\eta}(s)$ is admissible for $L^{\varepsilon}(x,y,t,\omega)$.
We observe that
\begin{eqnarray*} 
L^{\varepsilon}(x,y,t,\omega)&\leq& 
\int_0^{t }L(\delta_{1/\varepsilon}(\xi_{\eta}(s)), \alpha^{\xi_{\eta}}(s),\omega)ds\\
&=&\frac{t}{t+h}\int_0^{t+h }L\left(\delta_{1/\varepsilon}(\eta(s)), \frac{t+h}{t}\alpha^{\eta}(s),\omega\right)ds.
\end{eqnarray*} 
Define
$$
I:=\frac{t}{t+h}\int_0^{t+h }L\left(\delta_{1/\varepsilon}(\eta(s)), \frac{t+h}{t}\alpha^{\eta}(s),\omega\right)ds-
\int_0^{t+h }L(\delta_{1/\varepsilon}(\eta(s)), \alpha^{\eta}(s),\omega)ds, 
$$
so that
\begin{equation}
\label{chicago}
\int_0^{t+h }L(\delta_{1/\varepsilon}(\eta(s)), \alpha^{\eta}(s),\omega)ds-L^{\varepsilon}(x,y,t,\omega)\geq -I.
\end{equation}
Assume for the moment that
\begin{equation}\label{9sett}
I\leq C_\delta h.
\end{equation}
Then 
passing to the infimum over $\eta$ in \eqref{chicago} we obtain relation \eqref{9settembre}. \\
Let us now prove \eqref{9sett}.
Writing $\frac{t}{t+h}=1-\frac{h}{t+h}$, we have
\begin{multline}\label{9sett1}
I=\int_0^{t+h }\left(L\left(\delta_{1/\varepsilon}(\eta(s)), \frac{t+h}{t}\alpha^{\eta}(s),\omega\right)-L\left(\delta_{1/\varepsilon}(\eta(s)), \alpha^{\eta}(s),\omega\right)\right)ds\;+\\ -\frac{h}{t+h} \int_0^{t+h }L\left(\delta_{1/\varepsilon}(\eta(s)), \frac{t+h}{t}\alpha^{\eta}(s),\omega\right)ds
\end{multline}
To get \eqref{9sett} we estimate $|I|$. We start estimating the modulus of the second integral.
Note that  {\bf(L2)} implies that
\begin{equation}
\label{GrowthConditionModulus}
|L(x,q,\omega)|\leq C_1\big(|q|^\lambda+1\big)+C^{-1}_1 \big(|q|^\lambda-1\big)\leq C(|q|^\lambda+1)
\end{equation}
where we have used that $\max\{A,B\}\leq |A|+|B|$ and $C=C_1+C^{-1}_1$.\\
Then by \eqref{GrowthConditionModulus} and for $h$ sufficiently small (so that $\delta<t+h<\frac{1}{\delta}$),  we have
\begin{eqnarray}\label{412}
&&\frac{h}{t+h} \int_0^{t+h}\bigg|L\left(\delta_{1/\varepsilon}(\eta(s)), \frac{t+h}{t}\alpha^{\eta}(s),\omega\right)\bigg|ds\nn\\
&&\qquad \leq \frac{h}{t+h} \int_0^{t+h} C\left(1+\left(\frac{t+h}{t}\right)^\lambda |\alpha^\eta(s)|^\lambda \right)ds\nn\\
&&\qquad \leq \frac{h}{\delta}C
\left(\frac{1}{\delta}+\left(\frac{t+h}{t}\right)^\lambda \|\alpha^\eta\|_{L^\lambda (0,t+h)}\right)\nn\\
&&\qquad \leq h\frac{C}{\delta}
\left(\frac{1}{\delta}+\widetilde{C}\right)=C_\delta h.
\label{9sett2}
\end{eqnarray}
where the last inequality is due to Proposition~\ref{prp71}.

On the other hand, by assumptions {\bf(L2)} and {\bf(L6)}, we have
\begin{eqnarray}\label{413}
&&\int_0^{t+h}\left|L\left(\delta_{1/\varepsilon}(\eta(s)), \frac{t+h}{t}\alpha^{\eta}(s),\omega\right)-L(\delta_{1/\varepsilon}(\eta(s)), \alpha^{\eta}(s),\omega)\right|ds\nn\\
&&\qquad\leq \int_0^{t+h} C_1 \left|\left(\frac{t+h}{t}\right)^\lambda -1 \right|\left|\alpha^\eta(s)\right|^\lambda ds \nn\\
&&\qquad \leq C_1\delta^{-\lambda} \big| (|t+h)^\lambda -t^\lambda 
\big|\|\alpha^\eta\|_{L^\lambda(0,t+h)}^\lambda\nn\\
&&\qquad \leq C_{\delta} h \label{9sett3}
\end{eqnarray}
where the last inequality is due to Proposition \ref{prp71}.\\
Using \eqref{9sett2} and \eqref{9sett3} in \eqref{9sett1}, we get \eqref{9sett}.
\end{proof}


 \begin{lemma}\label{NDrem}
For later use, we collect some consequences of the previous estimates:
\begin{enumerate}
\item[1.] $L^\varepsilon(x\circ v,x,\|v\|_{CC},\omega)\le C\|v\|_{CC}$. \\
\item [2.] For $0<\rho \ll 1$ we have 
  \begin{eqnarray*}|L^\varepsilon(x,y,t,\omega)-L^\varepsilon(x,y,(1+\rho)t,\omega)|
&\le&  C\rho|L^\varepsilon(x,y,t,\omega)|,
\end{eqnarray*}      
by choosing $h=\rho t$ in the proof of 
Lemma \ref{prop1}.	
\item [3.] $L^\varepsilon(x,z,t+s,\omega)\le L^\varepsilon(x,y,t,\omega)+
L^\varepsilon(y,z,s,\omega)$.
\end{enumerate}
\end{lemma}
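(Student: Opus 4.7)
I would prove the three items of Lemma \ref{NDrem} in order of increasing difficulty, each essentially packaging estimates already established.

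Item 1 is immediate from Lemma \ref{lemma2}: apply \eqref{4.2} with $y=x$ and $t=0$, noting that $L^\varepsilon(x,x,0,\omega)=0$ (the constant ``curve'' on $[0,0]$); equivalently, insert a unit-speed $d_{CC}$-geodesic $\gamma\in\mathcal{A}^{\|v\|_{CC}}_{x,x\circ v}$ (so $|\alpha^\gamma|\equiv 1$) as a test curve and use the upper bound in \textbf{(L2)} to obtain $L^\varepsilon(x\circ v,x,\|v\|_{CC},\omega)\le 2C_1\|v\|_{CC}$. For item 3 (subadditivity), pick $\delta'$-almost optimal $\xi_1\in\mathcal{A}^t_{y,x}$ and $\xi_2\in\mathcal{A}^s_{z,y}$ and concatenate them into $\xi\in\mathcal{A}^{t+s}_{z,x}$ given by $\xi=\xi_2$ on $[0,s]$ and $\xi(r)=\xi_1(r-s)$ on $[s,t+s]$. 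The horizontal velocity is the concatenation of the two measurable velocities, so $\xi$ is admissible. Using $\xi$ in the infimum defining $L^\varepsilon(x,z,t+s,\omega)$ and splitting the integral at $r=s$, then letting $\delta'\to 0$, yield the inequality.

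Item 2 is the delicate point; it is the argument of Lemma \ref{prop1} specialised to the choice $h=\rho t$, so that $(t+h)/t = 1+\rho$. To obtain $L^\varepsilon(x,y,t,\omega)-L^\varepsilon(x,y,(1+\rho)t,\omega)\le C\rho|L^\varepsilon(x,y,t,\omega)|$, I would take $\eta$ $\delta'$-optimal for $L^\varepsilon(x,y,(1+\rho)t,\omega)$ and set $\xi(s):=\eta((1+\rho)s)$; by Lemma \ref{leftinv}(ii), $\xi\in\mathcal{A}^t_{y,x}$ with $\alpha^\xi(s)=(1+\rho)\alpha^\eta((1+\rho)s)$. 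After the change of variables $u=(1+\rho)s$, the difference reduces to a pointwise term of the form $L(\cdot,(1+\rho)\alpha^\eta,\omega)-L(\cdot,\alpha^\eta,\omega)$, which assumption \textbf{(L6)} bounds by $C|1-(1+\rho)^\lambda|\,|L(\cdot,\alpha^\eta,\omega)|\le C\rho|L(\cdot,\alpha^\eta,\omega)|$ for small $\rho$, plus a boundary term $\frac{\rho}{1+\rho}\int L(\cdot,\alpha^\eta,\omega)\,du$, which is essentially $\frac{\rho}{1+\rho}L^\varepsilon(x,y,(1+\rho)t,\omega)$ modulo the $\delta'$-error. The reverse inequality follows symmetrically by scaling an optimal curve for $L^\varepsilon(x,y,t,\omega)$ via $s\mapsto\eta(s/(1+\rho))$.

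The main obstacle lies in item 2: passing from the additive estimate $C_\delta h$ of Lemma \ref{prop1} to a genuinely multiplicative estimate $C\rho|L^\varepsilon|$ requires replacing $\int|L|\,du$ by $|L^\varepsilon|$ up to lower-order terms, using the lower bound $L\ge -C_1^{-1}$ from \textbf{(L2)} together with the pointwise multiplicative modulus provided by \textbf{(L6)}. This is precisely what \textbf{(L6)} is designed for: without it, the ordinary growth bound \textbf{(L2)} only yields the weaker additive continuity of Lemma \ref{prop1}. Items 1 and 3 are routine.
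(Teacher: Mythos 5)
Your treatment of items 1 and 3 matches the paper's: item 1 is exactly Lemma \ref{lemma2} applied with $y=x$, $t=0$, and item 3 is the standard concatenation of near-optimal curves, which the paper dismisses as obvious. The outline for item 2 (rescaling the near-minimizer, change of variables, decomposition into a pointwise-scaling term plus a boundary term, use of \textbf{(L6)}) also follows the same skeleton as the proof of Lemma \ref{prop1} and the paper's refinement of it.

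However, your item 2 has a genuine gap precisely at the point you flag as the ``main obstacle.'' You propose to control $\int |L(\cdot,\alpha^\eta,\omega)|\,ds$ by $|L^\varepsilon(x,y,t,\omega)|$ using the lower bound $L\ge -C_1^{-1}$ from \textbf{(L2)}. This does not close the gap: since the original $L$ can be negative, $\int L^-\,ds$ can be of order $t$, so that route only yields
$$
\int_0^{(1+\rho)t}|L(\cdot,\alpha^\eta,\omega)|\,ds \;\le\; L^\varepsilon\big(x,y,(1+\rho)t,\omega\big) + C t,
$$
and after multiplying by $\rho$ you are left with an additive error $C\rho t$ that cannot be absorbed into $C\rho|L^\varepsilon(x,y,t,\omega)|$ when $L^\varepsilon$ is small (e.g.\ when $L^\varepsilon\approx 0$). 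The paper's actual proof resolves this by the preliminary normalization: replace $L$ by $L+c$ so that \textbf{(L2)} becomes $C_1^{-1}(|q|^\lambda+1)\le L\le C_1(|q|^\lambda+1)$, which is without loss of generality since adding a constant to $L$ shifts every $u^\varepsilon$ by that same constant. After this, $L\ge 0$ (so $\int|L|=\int L = L^\varepsilon$ along a minimizer), and moreover $L^\varepsilon(x,y,t,\omega)\ge C_1^{-1}t$, which is exactly what allows all the extra $t$-sized errors to be absorbed multiplicatively. One further then uses \textbf{(L6)} on a rescaled minimizer to pass from $L^\varepsilon(x,y,(1+\rho)t,\omega)$ back to $L^\varepsilon(x,y,t,\omega)$. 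Without this normalization the multiplicative estimate in item 2 is not established by your argument (and indeed need not hold pointwise for the unnormalized $L^\varepsilon$); this is the missing idea.
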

\begin{proof}
Point (1) follows from Lemma \ref{lemma2} by choosing $x=y$ and $t=0$. 
We prove now point (2). We note that w.l.o.g. we can replace assumption {\bf (L2)} with
\begin{equation*}
C^{-1}_1(|q|^{\lambda}+1) \leq L(x,q,\omega) \leq C_1(|q|^{\lambda}+1),
\end{equation*}
Indeed, if we increase $L$ by a constant then all the $u^\varepsilon$ increase by the same constant making no relevant change in the problem. By this assumption, Lemma~\ref{lemma71} implies $C_1^{-1}t\leq L^\varepsilon(x,y,t,\omega)$.
We now easily conclude the proof following the same arguments of Lemma \ref{prop1}: As $h(t+h)^{-1}$ becomes $\rho(1+\rho)^{-1}$ for $h=\rho t,$ the integral in \eqref{412}
can be estimated for $0 \le \rho\le 1$ by
$$
\rho \int_0^{(1+\rho)t}C_1\left((1+(1+\rho)^\lambda)|\alpha|^\lambda)\right)ds\le \rho CL^\varepsilon(x,y,(1+\rho)t,\omega)\le \hat C\rho L^\varepsilon(x,y,t,\omega), 
$$ where the first inequality follows from our lower bounds on the Lagrangian by choosing $\alpha$ as a minimizer. The second inequality is a consequence of ${(\bf L6)}$ for $s=1+\rho\le 2.$

The integral in \eqref{413} can be estimated by
$$
C_1|(1+\rho)^\lambda-1|\|\alpha^\eta\|_{L^\lambda(0,(1+\rho)t)}^\lambda.
$$For $\rho$ sufficiently small, we find a constant $C_\lambda$ such that
$
|(1+\rho)^\lambda-1|\le C_\lambda\rho.$ Now we conclude as in the previous step. 

\noindent Point (3) is obvious.
\end{proof}
As a direct consequence we have the following lemma:
\begin{lemma}\label{replacement}
Consider $x_1, y_1, x_2, y_2\in \R^N$ and $t>0$, with $\norma{-x_1\circ x_2}_{CC}+\norma{-y_1\circ y_2 }_{CC}\ll t.$
Then
\begin{multline*}
|L^\varepsilon(x_1,y_1,t,\omega)-L^\varepsilon(x_2,y_2,t,\omega)|\\
\le C\big(L^\varepsilon(x_1,y_1,t,\omega)+L^\varepsilon(x_2,y_2,t,\omega)\big)\left(\| -x_1\circ x_2 \|_{CC}+\|-y_1 \circ y_2\|_{CC}\right).
\end{multline*}
\end{lemma}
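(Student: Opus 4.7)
The plan is to obtain the bound as a direct combination of the three estimates in Lemma \ref{NDrem}. Set $\delta_x:=\|-x_1\circ x_2\|_{CC}$, $\delta_y:=\|-y_1\circ y_2\|_{CC}$, and $\delta:=\delta_x+\delta_y$; by hypothesis $\delta/t$ is a small parameter.

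First I would build a competitor for $L^\varepsilon(x_2,y_2,t+\delta,\omega)$ by concatenating three horizontal curves: a geodesic from $y_2$ to $y_1$ of duration $\delta_y$, a nearly optimal curve for $L^\varepsilon(x_1,y_1,t,\omega)$ of duration $t$, and a geodesic from $x_1$ to $x_2$ of duration $\delta_x$. Applying point (3) of Lemma \ref{NDrem} twice and using point (1) to bound each short connector by $C\delta_y$ and $C\delta_x$ respectively gives
$$L^\varepsilon(x_2,y_2,t+\delta,\omega)\le L^\varepsilon(x_1,y_1,t,\omega)+C\delta.$$

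Next I would apply point (2) of Lemma \ref{NDrem} with $\rho=\delta/t$, so that $(1+\rho)t=t+\delta$ and $\rho\ll 1$ by assumption, to compare $L^\varepsilon(x_2,y_2,t+\delta,\omega)$ with $L^\varepsilon(x_2,y_2,t,\omega)$:
$$L^\varepsilon(x_2,y_2,t,\omega)\le L^\varepsilon(x_2,y_2,t+\delta,\omega)+C\tfrac{\delta}{t}L^\varepsilon(x_2,y_2,t,\omega).$$
Chaining this with the previous display yields the asymmetric bound
$$L^\varepsilon(x_2,y_2,t,\omega)-L^\varepsilon(x_1,y_1,t,\omega)\le C\delta+C\tfrac{\delta}{t}L^\varepsilon(x_2,y_2,t,\omega).$$

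Finally, to recast the additive $C\delta$ as a multiplicative correction, I would use the harmless normalization of the Lagrangian from the proof of point (2) of Lemma \ref{NDrem}: modifying $L$ by a constant does not change differences of $L^\varepsilon$, and it guarantees $L^\varepsilon(x,y,t,\omega)\ge C_1^{-1}t$, so $C\delta\le C'(\delta/t)L^\varepsilon(x_1,y_1,t,\omega)$. Swapping $(x_1,y_1)\leftrightarrow(x_2,y_2)$ and taking absolute values then yields the stated inequality, the factor $1/t$ being absorbed into $C$ in the range of $t$ where the lemma will be applied. The main bookkeeping obstacle is precisely this last step: combining the additive contribution from the short connectors with the multiplicative error from time rescaling into a single expression proportional to $\bigl(L^\varepsilon(x_1,y_1,t,\omega)+L^\varepsilon(x_2,y_2,t,\omega)\bigr)(\delta_x+\delta_y)$, using only that $L^\varepsilon$ itself is comparable to $t$ from below.
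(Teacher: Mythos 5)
Your proof follows the same route as the paper's: concatenate short geodesic connectors with a near-optimal middle path (parts (3) and (1) of Lemma \ref{NDrem}), then compare $L^\varepsilon$ at times $t$ and $t+\delta$ via the multiplicative time-rescaling estimate (part (2) with $\rho=\delta/t$), and finally symmetrize. Your explicit observation that the additive $C\delta$ must be absorbed into a multiplicative form via $L^\varepsilon\ge C_1^{-1}t$, and that a residual factor $1/t$ remains which is harmless only because the lemma is invoked on compact ranges of $t$ bounded away from $0$, is a correct and honest reading of the same bookkeeping the paper leaves implicit.
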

\begin{proof} 
By applying twice Lemma \ref{NDrem} part 3,  we deduce
\begin{multline*}
L^\varepsilon(x_1,y_1,t+\big(\norma{-x_1\circ x_2}_{CC}+\norma{-y_1\circ y_2}_{CC},\omega\big)\\
\le L^\varepsilon\big(x_1,x_2,\norma{-x_1 \circ x_2}_{CC},\omega\big)+L^\varepsilon(x_2,y_2,t,\omega)+L^\varepsilon(y_2,y_1,\norma{-y_1\circ  y_2 }_{CC},\omega).
\end{multline*}
By applying twice Lemma \ref{NDrem} part 1, taking first   $v=-x_1\circ x_2 $ and then  $v=-y_1\circ y_2 ,$ and by Lemma \ref{NDrem} part 2 with
$$\rho=\frac{\|-x_1 \circ x_2\|_{CC}+\|-y_1 \circ  y_2\|_{CC}}{t},$$
and interchanging the role of $x_1,x_2$ with that of $y_1,y_2$, the claim follows.
\end{proof}


\begin{theorem}\label{UniformContinuityFunctional}
Under assumptions {\bf (L2)} and  {\bf (L6)}, for all fixed  $\omega\in \Omega$, $L^{\varepsilon}(x,y,t,\omega)$ is locally uniformly continuous w.r.t.  $x,y\in \R^N$ and $t$ away from 0,  uniformly w.r.t. $\varepsilon>0$.
\end{theorem}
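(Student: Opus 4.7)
The plan is to assemble the statement from the two continuity pieces already built: continuity in $t$ (Lemma \ref{prop1}) and continuity in $(x,y)$ (Lemma \ref{replacement}), while using the growth estimate of Lemma \ref{lemma71} to remove the prefactor that appears in the spatial bound.

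Fix a compact set $K\subset\R^N$, a parameter $\delta\in(0,1)$, and $\omega\in\Omega$. I will restrict to $x_1,y_1,x_2,y_2\in K$ and $t_1,t_2\in[\delta,1/\delta]$, and by triangular inequality split
$$
|L^\varepsilon(x_1,y_1,t_1,\omega)-L^\varepsilon(x_2,y_2,t_2,\omega)|\le T+S
$$
with $T:=|L^\varepsilon(x_1,y_1,t_1,\omega)-L^\varepsilon(x_1,y_1,t_2,\omega)|$ and $S:=|L^\varepsilon(x_1,y_1,t_2,\omega)-L^\varepsilon(x_2,y_2,t_2,\omega)|$, then handle the two terms separately.

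For the temporal term $T$, I will apply Lemma \ref{prop1} directly. Since $K$ is compact, $d_{CC}(x_1,y_1)$ is bounded uniformly on $K\times K$ by some $R_K$, so $(x_1,y_1)\in A_{\delta'}$ for any $\delta'\le 1/R_K$. Choosing $\delta'':=\min(\delta, 1/(R_K+1))$, Lemma \ref{prop1} produces $T\le C_{\delta''}|t_1-t_2|$, with constant independent of $\varepsilon$ and $\omega$. For the spatial term $S$, once the points are close enough that $\|-x_1\circ x_2\|_{CC}+\|-y_1\circ y_2\|_{CC}\ll t_2$ (automatic in the regime of uniform continuity), Lemma \ref{replacement} yields
$$
S\le C\bigl(L^\varepsilon(x_1,y_1,t_2,\omega)+L^\varepsilon(x_2,y_2,t_2,\omega)\bigr)\bigl(\|-x_1\circ x_2\|_{CC}+\|-y_1\circ y_2\|_{CC}\bigr).
$$
The prefactor is then controlled by Lemma \ref{lemma71}: for $x_i,y_i\in K$ and $t_2\in[\delta,1/\delta]$ one has $L^\varepsilon(x_i,y_i,t_2,\omega)\le C_1 t_2+C_1 d_{CC}(x_i,y_i)^\lambda/t_2^{\lambda-1}$, which is bounded by a constant $M_{K,\delta}$ depending only on $K$ and $\delta$. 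Thus $S\le C_{K,\delta}\bigl(\|-x_1\circ x_2\|_{CC}+\|-y_1\circ y_2\|_{CC}\bigr)$, uniformly in $\varepsilon$ and $\omega$.

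Combining $T$ and $S$ gives a modulus of the form $|t_1-t_2|+\|-x_1\circ x_2\|_{CC}+\|-y_1\circ y_2\|_{CC}$ with a constant depending only on $K$ and $\delta$. Finally, since $d_{CC}$ and the Euclidean distance are topologically equivalent on compact sets by Lemma \ref{ReelationDistances}, this translates into local uniform continuity in $(x,y,t)$ in the Euclidean sense, uniformly in $\varepsilon>0$. No substantive obstacle is expected: the preparatory lemmas have already done the real work. The only delicate point is bookkeeping to ensure (i) that the smallness hypothesis of Lemma \ref{replacement} is met on the neighbourhoods used, and (ii) that each constant produced is genuinely independent of $\varepsilon$ and $\omega$, both of which hold by the uniformity already established in Lemmas \ref{lemma71}, \ref{prop1} and \ref{replacement}.
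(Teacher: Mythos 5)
Your proof is correct. The temporal part coincides with the paper's (both invoke Lemma~\ref{prop1} directly, after the same adjustment of~$\delta$ to guarantee $(x,y)\in A_\delta$ on a compact set). The spatial part, however, follows a genuinely different route. The paper writes
$$
|L^\varepsilon(x,y,t,\omega)-L^\varepsilon(\tilde x,y,t,\omega)|\le
|L^\varepsilon(x,y,t,\omega)-L^\varepsilon(\tilde x,y,t+\|-x\circ\tilde x\|_h,\omega)|+
|L^\varepsilon(\tilde x,y,t+\|-x\circ\tilde x\|_h,\omega)-L^\varepsilon(\tilde x,y,t,\omega)|
$$
and controls the first term by Lemma~\ref{lemma2} (the ``translate a spatial endpoint at the cost of a time shift'' estimate) and the second again by Lemma~\ref{prop1}, obtaining a modulus in $\|\cdot\|_h$. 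You instead reach for Lemma~\ref{replacement}, which is already a packaged two-sided spatial estimate (itself built on Lemma~\ref{NDrem}, hence on the very Lemmas~\ref{lemma2} and~\ref{prop1} the paper uses directly), and then tame the multiplicative prefactor $L^\varepsilon(x_1,y_1,t)+L^\varepsilon(x_2,y_2,t)$ via Lemma~\ref{lemma71}; your modulus comes out in $\|\cdot\|_{CC}$, equivalent to $\|\cdot\|_h$ on compacts by Lemma~\ref{ReelationDistances}. Both routes are valid and yield the same conclusion with constants independent of $\varepsilon$ and $\omega$; the paper's is a touch shorter because it bypasses the prefactor, while yours is more modular and makes explicit that the smallness condition of Lemma~\ref{replacement} is harmless in the uniform-continuity regime (indeed it only needs to be checked when the spatial increments are small compared with $t\ge\delta$, which is exactly the regime where a modulus of continuity matters).
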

\begin{proof}
By Lemma \ref{prop1} we have the local uniform continuity with respect to $t$.
It remains to show the local uniform continuity with respect to $x$ (w.r.t $y$ is the same and so omitted).
We need to show that, for every $\delta>0$,  there exists a constant $\overline{C}_{\delta}>0$ such that
\begin{equation}\label{12sett}\left|L^{\varepsilon}(x,y,t,\omega)-L^{\varepsilon}(\tilde x,y,t,\omega)  \right|\leq \overline{C}_\delta\,\norma{-x\circ \tilde x}_h\qquad \forall \varepsilon>0
\end{equation}
and for any $t\in [\delta, 1/\delta]$ and for any 
$x,\tilde x, y $ with CC-norm smaller that $\frac{1}{\delta}$.\\
Indeed, we have
\begin{multline*}
\left|L^{\varepsilon}(x,y,t,\omega)-L^{\varepsilon}(\tilde x,y,t,\omega)  \right|\leq
\left|L^{\varepsilon}(x,y,t,\omega)-L^{\varepsilon}(\tilde x,y,t+\|-x\circ \tilde x\|_{h},\omega)  \right|\\+
\left|L^{\varepsilon}(\tilde x,y,t+\|-x\circ \tilde x\|_{h},\omega)-L^{\varepsilon}(\tilde x,y,t,\omega)\right|.
\end{multline*}
We observe that Lemma \ref{lemma2} (with $v=-x\circ \widetilde{x}$) and Lemma~\ref{prop1}  give
$$
\left|L^{\varepsilon}(x,y,t,\omega)-L^{\varepsilon}(\tilde x,y,t,\omega)  \right|\leq(C_1+C_\delta)
\norma{-x\circ \widetilde{x}}_h,
$$
where $C_1$ is the constant introduced in {\bf(L2)} while $C_\delta$ is the constant introduced in Lemma \ref{prop1}.
\end{proof}

\begin{lemma}\label{lemmapalle}
Under assumption {\bf(L2)}, then
\begin{equation}
 L^{\varepsilon}(x,y,t,\omega)\geq C_1^{-1}t^{1-\lambda}(d_{CC}(x,y))^{\lambda}-C_1^{-1}t, 
 \end{equation}
for all $\varepsilon>0$, $t>0$ and $x,y\in \R^N$,
where $C_1$ and $\lambda$ are the constants introduced in {\bf(L2)}.
\end{lemma}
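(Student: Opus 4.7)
The plan is to exploit the lower bound in assumption \textbf{(L2)}, combined with Jensen's inequality and the definition of the Carnot--Carath\'eodory distance as an infimum of lengths of horizontal curves.

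First, I would fix an arbitrary admissible curve $\xi\in\mathcal{A}^t_{y,x}$ with horizontal velocity $\alpha^\xi$. By the lower bound in \textbf{(L2)},
\begin{equation*}
\int_0^t L\bigl(\delta_{1/\varepsilon}(\xi(s)),\alpha^\xi(s),\omega\bigr)\,ds \;\geq\; C_1^{-1}\int_0^t|\alpha^\xi(s)|^\lambda\,ds \;-\; C_1^{-1}t,
\end{equation*}
which crucially eliminates the dependence on $\varepsilon$ and on $x,y$ pointwise along the curve.

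Next, since $\lambda>1$, the convexity of $r\mapsto r^\lambda$ and Jensen's inequality on $[0,t]$ yield
\begin{equation*}
\int_0^t|\alpha^\xi(s)|^\lambda\,ds \;\geq\; t\left(\frac{1}{t}\int_0^t|\alpha^\xi(s)|\,ds\right)^{\!\lambda} \;=\; t^{1-\lambda}\left(\int_0^t|\alpha^\xi(s)|\,ds\right)^{\!\lambda}.
\end{equation*}
Since $\xi$ is horizontal and joins $y$ to $x$ on $[0,t]$, by Definition \ref{CC-distance_definiton} its horizontal length $\int_0^t|\alpha^\xi(s)|\,ds$ is an admissible value for the infimum defining $d_{CC}(x,y)$, and therefore
\begin{equation*}
\int_0^t|\alpha^\xi(s)|\,ds \;\geq\; d_{CC}(x,y).
\end{equation*}

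Combining these three inequalities gives, for every admissible $\xi$,
\begin{equation*}
\int_0^t L\bigl(\delta_{1/\varepsilon}(\xi(s)),\alpha^\xi(s),\omega\bigr)\,ds \;\geq\; C_1^{-1}t^{1-\lambda}\,d_{CC}(x,y)^\lambda - C_1^{-1}t.
\end{equation*}
Taking the infimum over $\xi\in\mathcal{A}^t_{y,x}$ on the left-hand side yields the claim. There is no real obstacle here: the argument is essentially a one-line Jensen estimate, and the only point that requires a moment of care is the use of the definition of $d_{CC}$, which is precisely the infimum of the quantities $\int_0^t|\alpha^\xi|$ over the class of competitors used in $L^\varepsilon$.
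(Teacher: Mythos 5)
Your proof is correct and follows essentially the same route as the paper: lower bound from \textbf{(L2)}, Jensen's inequality, and the definition of $d_{CC}$ as the infimum of horizontal lengths; the only cosmetic difference is that you take the infimum over competitors at the end while the paper carries it along from the start.
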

\begin{proof}
By the definition of $L^{\varepsilon}(x,y,t,\omega)$, assumption {\bf(L2)} and Jensen's inequality, we obtain
\begin{eqnarray*}
L^{\varepsilon}(x,y,t,\omega)&\geq& C_1^{-1}\inf\limits_{\xi\in\cA^t_{y,x}} \left\{\int_0^{t }(|\alpha^{\xi}(s)|^{\lambda}-1)ds\right\}\\ 
&=&C_1^{-1}t \inf\limits_{\xi\in\cA^t_{y,x}} \left\{\left(\frac{1}{t}\int_0^{t}|\alpha^{\xi}(s)|^{\lambda}ds\right)\right\} - C_1^{-1}t\\
&\geq&C_1^{-1} t^{1-\lambda}\bigg(\inf\limits _{\xi\in\cA^t_{y,x}} \left\{\int_0^{t }|\alpha^{\xi}(s)|ds\right\}\bigg)^{\lambda}-C_1^{-1}t
\end{eqnarray*}
which is equivalent to the statement because of the definition of $d_{CC}(x,y)$.
\end{proof}
\begin{proposition}\label{prop3}
Assume that the Lagrangian~$L$ satisfies {\bf(L2)} and that the initial datum~$g$ satisfies
\begin{equation}\label{gdcc}
g(x)\geq -C(1+d_{CC}(x,0))\qquad \forall x\in \R^N.
\end{equation}
Then the $\inf\limits_{y\in \re^N}\{g(y)+L^{\varepsilon}(x,y,t,\omega)\}$ is attained in a ball.
\end{proposition}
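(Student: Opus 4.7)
The plan is to use coercivity of $L^\varepsilon$ in $y$ (from Lemma~\ref{lemmapalle}) to dominate the at-most-linear growth of $-g$, so that the minimization can be restricted to a bounded (in $d_{CC}$) set; then use Lemma~\ref{ReelationDistances} to convert this into a Euclidean ball, and finally obtain existence of a minimizer by the direct method.

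First I would fix a reference value for the infimum by choosing the test point $y = x$. Using Lemma~\ref{lemma71} with $d_{CC}(x,x)=0$, we have $L^\varepsilon(x,x,t,\omega)\le C_1 t$, hence
\begin{equation*}
\inf_{y\in\R^N}\bigl\{g(y)+L^\varepsilon(x,y,t,\omega)\bigr\}\le g(x)+C_1 t=:M.
\end{equation*}
Next, for an arbitrary $y\in\R^N$, combining Lemma~\ref{lemmapalle} with the growth assumption \eqref{gdcc} on $g$ yields
\begin{equation*}
g(y)+L^\varepsilon(x,y,t,\omega)\ge -C\bigl(1+d_{CC}(y,0)\bigr)+C_1^{-1}t^{1-\lambda}d_{CC}(x,y)^{\lambda}-C_1^{-1}t.
\end{equation*}
Since $d_{CC}$ is a genuine distance (the CC-norm satisfies the triangle inequality, as it is an infimum of lengths), $d_{CC}(y,0)\le d_{CC}(y,x)+d_{CC}(x,0)$. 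Writing $\rho:=d_{CC}(x,y)$, the right-hand side becomes
\begin{equation*}
C_1^{-1}t^{1-\lambda}\rho^{\lambda}-C\rho-\bigl(C+Cd_{CC}(x,0)+C_1^{-1}t\bigr).
\end{equation*}

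Because $\lambda>1$, the dominant term $C_1^{-1}t^{1-\lambda}\rho^\lambda$ diverges faster than $C\rho$ as $\rho\to\infty$, so there exists $R=R(x,t)>0$ (depending only on $x,t$ and the structural constants $C_1,C,\lambda$, hence independent of $\omega$ and $\varepsilon$) such that
\begin{equation*}
d_{CC}(x,y)>R \ \Longrightarrow\ g(y)+L^\varepsilon(x,y,t,\omega)>M+1.
\end{equation*}
This means the infimum in \eqref{rapprepsilon} equals the infimum taken over the CC-ball $\bar B_R^{CC}(x):=\{y:d_{CC}(y,x)\le R\}$, and by the equivalence of $d_{CC}$ with the Euclidean distance on compact sets (Lemma~\ref{ReelationDistances}), $\bar B_R^{CC}(x)$ is contained in a Euclidean ball.

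The remaining point is that the infimum is indeed attained. Given any minimizing sequence $(y_n)\subset \bar B_R^{CC}(x)$, its Euclidean relative compactness yields a subsequence $y_n\to y^*$ in $\R^N$; by Lipschitz continuity of $g$ and the lower semicontinuity of the functional $y\mapsto L^\varepsilon(x,y,t,\omega)$ (obtained from the direct method applied to the convex Lagrangian in \eqref{Lepsilon}: minimizing curves joining $y_n$ to $x$ have $\|\alpha^\xi\|_{L^\lambda(0,t)}$ bounded by Proposition~\ref{prp71}, hence along a subsequence they converge weakly in $L^\lambda$, so the limiting curve is admissible for $L^\varepsilon(x,y^*,t,\omega)$ with lower value by convexity of $L$ in $q$), the limit $y^*$ realizes the infimum. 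The main technical point to check carefully is this lower semicontinuity step, since assumption \textbf{(L6)} is \emph{not} available here and so Theorem~\ref{UniformContinuityFunctional} cannot be invoked directly; the convexity property \textbf{(L1)} together with the coercivity bound \textbf{(L2)} is however enough to run the standard direct-method argument.
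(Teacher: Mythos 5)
Your argument is essentially the paper's: you use the test point $y=x$ together with Lemma~\ref{lemma71} to bound the infimum from above by $g(x)+C_1 t$, then combine the coercivity bound of Lemma~\ref{lemmapalle} with the growth hypothesis \eqref{gdcc} and the triangle inequality for $d_{CC}$ to show that the functional exceeds this reference value once $d_{CC}(x,y)$ is large, and finally pass to a Euclidean ball via Lemma~\ref{ReelationDistances}. The only difference is that you also spell out the attainment step by a direct-method argument (equi-bounded $L^\lambda$ velocities from Proposition~\ref{prp71}, weak compactness, convexity of $L$ in $q$), whereas the paper stops after reducing the infimum to a bounded set and leaves attainment implicit; your added discussion is correct and arguably makes the conclusion ``attained'' more honest, though strictly speaking it uses a bit more than just \textbf{(L2)} (continuity of $L$ in $x$, i.e.~\textbf{(L3)}, to pass to the limit in the slow variable), which is consistent with the standing assumptions elsewhere in the paper.
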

\begin{proof}
As in  \cite[Lemma 3.4]{RT}, we want to prove that the infimum outside a suitable ball is greater than the infimum over the entire space.
From  {\bf(L2)} we have that $L(x,0,\omega)\leq C_1$, so 
$L^{\varepsilon}(x,x,t,\omega)\leq C_1t$, that implies
\begin{equation}\label{*}
\inf\limits_{y\in \re^N}\left\{g(y)+L^{\varepsilon}(x,y,t,\omega)\right\}\leq 
g(x)+L^{\varepsilon}(x,x,t,\omega)\leq g(x)+C_1t,
\end{equation}
where the second inequality is obtained choosing the constant curve $\xi(s)=x$ for any $s\in(0,t)$ in the definition of $L^{\varepsilon}(x,x,t,\omega)$.\\
From \eqref{gdcc} and the triangle inequality, we have 
\begin{align*}
g(y)+C_1^{-1}t^{1-\lambda}d_{CC}(x,y)^{\lambda}-C_1^{-1}t\geq & -C-C\,t\frac{d_{CC}(x,y)}{t}-Cd_{CC}(x,0)\\
&+C_1^{-1}t\left(\frac{d_{CC}(x,y)}{t}\right)^{\lambda}-C_1^{-1}t.
\end{align*}
Since the right-hand  side,  for any fixed $x$ and $t$, goes to $+\infty$, as $d_{CC}(x,y)\to +\infty$, then  there exist $R>0$  such that
\begin{equation}\label{**}
g(y)+C_1^{-1}t^{1-\lambda}d_{CC}(x,y)^{\lambda}-C_1^{-1}t \geq g(x)+C_1t \qquad  \forall y\in \R^N\backslash D_R
\end{equation}
where
$D_R:=\{y\in \re^N: d_{CC}(x,y)\leq RT\}.$
By using both inequalities (\ref{*}) and (\ref{**}), we get
\begin{eqnarray*}
\inf\limits_{y\in \re^N}\{g(y)+L^{\varepsilon}(x,y,t,\omega)\}&\leq&
\inf\limits_{ \re^N\backslash D_R}\left\{ g(y)+C_1^{-1}t\left(\frac{d_{CC}(x,y)}{t}\right)^{\lambda}-C_1^{-1}t\right\}\\
&\leq& \inf\limits_{\re^N\backslash D_R}\big\{ g(y)+L^{\varepsilon}(x,y,t,\omega)\big\}
\end{eqnarray*}
where the last inequality is due to Lemma \ref{lemmapalle}. To conclude we just remark that, by the H\"ormander condition, $D_R$ is contained in an Euclidean ball (up to a different radius), then the lemma is proved.
\end{proof}
\begin{proposition}\label{propu4}
Assume that $L(x,p,\omega)$ satisfies {\bf(L2)} and that $g$ is bounded. 
Then function $u^{\varepsilon}$ defined in \eqref{rapprepsilon} can be written as  
$$u^{\varepsilon}(x,t,\omega)=\min\limits_{y\in \re^N, d_{CC}(x,y)\leq C}\{g(y)+L^{\varepsilon}(x,y,t,\omega)\}$$ 
where $C>0$ is a constant depending only on the $\|g\|_{\infty}$ and the constants  {\bf (L2)}.
\end{proposition}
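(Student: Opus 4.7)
The plan is to adapt the argument of Proposition~\ref{prop3}, where the lower growth hypothesis \eqref{gdcc} was used; here the boundedness of $g$ plays the same role (in fact more strongly), so no essential new idea is needed, only a simpler bookkeeping.

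First I would produce a crude upper bound for the infimum by choosing the constant curve $\xi(s)\equiv x$ in the definition of $L^\varepsilon(x,x,t,\omega)$: assumption {\bf(L2)} gives $L(\cdot,0,\cdot)\le C_1$, hence $L^\varepsilon(x,x,t,\omega)\le C_1 t$, and therefore
\[
\inf_{y\in\re^N}\{g(y)+L^\varepsilon(x,y,t,\omega)\}\;\le\; g(x)+C_1 t\;\le\; \|g\|_\infty+C_1 t.
\]
Next I would obtain a matching coercive lower bound outside a large ball: Lemma~\ref{lemmapalle} yields $L^\varepsilon(x,y,t,\omega)\ge C_1^{-1}t^{1-\lambda}d_{CC}(x,y)^\lambda-C_1^{-1}t$, and combined with $g(y)\ge -\|g\|_\infty$ this gives
\[
g(y)+L^\varepsilon(x,y,t,\omega)\;\ge\;-\|g\|_\infty+C_1^{-1}t^{1-\lambda}d_{CC}(x,y)^\lambda-C_1^{-1}t.
\]
Since $\lambda>1$ by {\bf(L2)}, the right-hand side tends to $+\infty$ as $d_{CC}(x,y)\to+\infty$, so one can explicitly solve for a radius $R=R(\|g\|_\infty,t,C_1,\lambda)$ beyond which the right-hand side strictly exceeds $\|g\|_\infty+C_1 t$. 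The two bounds together force any minimizing sequence of the infimum to stay inside the CC-ball $\{y:d_{CC}(x,y)\le R\}$.

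Finally, by Lemma~\ref{ReelationDistances} this CC-ball is contained in a Euclidean ball, hence compact in the Euclidean topology; by Theorem~\ref{UniformContinuityFunctional} the map $y\mapsto L^\varepsilon(x,y,t,\omega)$ is continuous there, so (with the continuity of $g$ inherited from the ambient setting of Theorem~\ref{th1}) the infimum is attained and can be written as a minimum. No step poses a substantive obstacle: the only delicate point is the explicit $t$-dependence of $R$, which is entirely handled by the coercive lower bound of Lemma~\ref{lemmapalle}.
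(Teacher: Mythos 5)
Your proof takes essentially the same route as the paper's: both localize the near-minimizers to a CC-ball by playing a crude upper bound (the constant curve, equivalently the a priori bound $\|u^\varepsilon\|_\infty\le\|g\|_\infty$ used in the paper) against the coercive lower bound of Lemma~\ref{lemmapalle}, and both correctly expose the implicit $t$-dependence of the radius (the statement of the proposition is a little terse on that point). One small caveat in your last paragraph: Theorem~\ref{UniformContinuityFunctional} requires {\bf(L6)} as well as {\bf(L2)}, so it is not actually available under the hypotheses of the proposition; for the attainment step one should instead use the continuity in $x$ from Lemma~\ref{lemma2} together with lower semicontinuity of the integral functional (or, as the paper does, simply leave attainment implicit once the infimum is shown to reduce to a compact CC-ball).
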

\begin{proof}
The boundedness of $u^{\varepsilon}$ is proved in
Theorem \ref{th1}. Moreover
for each $\eta>0$ there exists a $\overline y\in\R^N$ such that 
\begin{eqnarray*}
&&u^{\varepsilon}=\inf\limits_{y\in \re^N}\{g(y)+L^{\varepsilon}(x,y,t,\omega)\}\geq
g(\overline y)+L^{\varepsilon}(x,\overline y,t,\omega)-\eta\\
&&\geq -\|g\|_{\infty}+ C_1^{-1}t^{1-\lambda}(d_{CC}(x,\overline y))^{\lambda}-C_1^{-1}t-\eta.
\end{eqnarray*}
So we get 
\begin{equation}\label{stimau4}
(d_{CC}(x,\overline y))^{\lambda}\leq 
C_1\bigg(\|u^{\varepsilon}\|_{\infty} +\|g\|_{\infty} +C_1^{-1}t+\eta\bigg)t^{\lambda-1},
\end{equation}
which implies the statement, by recalling that $\|u^{\varepsilon}\|_{\infty}$ is bounded by $\|g\|_{\infty}$.\end{proof}

\section{A lower dimensional constrained problem to \\
determine the effective Lagrangian.}
Inspired by the approach of \cite{S1}, we now pass to study the convergence of the functional $L^{\varepsilon}(x,y,t,\omega)$ as $\varepsilon \to 0^+$, 
by using the Subadditive Ergodic Theorem.
First we introduce  a special family of horizontal curves which can be used as initial condition to build a subadditive stationary process.
At this purpose we  use curves with the property to have constant horizontal  velocity w.r.t. the given family of vector fields $\cX=\{X_1,\dots,X_m\}$, namely   $\cX$-lines.
 For more details on those curves one can see  \cite{{BD1},{BD2}}.
\begin{definition}
\label{Defi_Lines}
We  call \emph{$\cX$-line} any absolute continuous curve $\xi:[0,t]\to \R^N$, satisfying
\begin{equation}
\label{lines}
\dot{\xi}(s)=\sum_{i=1}^mq_i X_i(\xi(s))=\sigma(\xi(s))q,\quad\textrm{a.e.}\ s\in (0,t),
\end{equation}
for some constant vector $q\in \R^m$.
Using notation coherent with \cite{S1} we denote by $l^\cX_q(s)$  the  $\cX$-line starting from the origin  associated to the horizontal constant velocity $q\in \re^m$.
\end{definition}
\begin{rem}$\quad$
\begin{enumerate}
\item
Since the vector fields associated to Carnot groups are smooth, the $\cX$-lines are smooth curves so relation~\eqref{lines} holds for  all $s\in (0,t)$.
\item Since the vector fields are linearly independent at any points (see Remark \ref{RemarkLinearIndepedent}), for any fixed $q\in \R^m$ and for any fixed starting point $x$, there is a unique $\cX$-line starting from the point $x$ and associated to the horizontal constant velocity $q$.
\item $\cX$-lines starting from a given point $x$ are curves in $\R^N$ depending only on $m$ parameters with $m<N$. Then, while there always exists an horizontal curve joining two given points $x$ and $y$, in general a $\cX$-line joining $x$ to $y$ may not exist in a Carnot group.
\end{enumerate}
\end{rem}
To study the convergence of  $L^{\varepsilon}(x,y,t,\omega)$ we need to use $\cX$-lines so we first restrict our attention to the points $x$ and $y$ that can be connected by using a $\cX$-line.  Following the notations in \cite{BD2}, we define the $\cX$-plane associated to a point $x$ which is, roughly speaking the union of all the $\cX$-lines starting from  $x$. 
\begin{definition}\label{Vx}
We call $\cX$-plane associated to the point $x$ the set of all the points that one can reach from $x$ through a
$\cX$-line, i.e.
\begin{equation*}
V_x:=\{y\in\re^N\,|\,\exists\,q\in\re^m \;\textrm{and}\; \xi^{q}\; \cX\mbox{-line such that}\; \xi^{q}(0)=x, \xi^{q}(1)=y\}.
\end{equation*}
\end{definition}
\begin{rem}[$\cX$-lines in Carnot groups]
Note that in the Heisenberg group the $\cX$-lines form a subset of Euclidean straight lines but in general the structure of $\cX$-lines can look very different from the Euclidean straight lines (see \cite{{BD1},{BD2}} for some examples). Still, if  we assume that the vector fields are associated to a Carnot group in exponential coordinates (see \eqref{matrixC}), then at least the first $m$-components are Euclidean affine. This implies that in Carnot groups, whenever $y\in V_x$, then the unique horizontal velocity $q$ such that $\xi^{q}(0)=x$ and $ \xi^{q}(1)=y$ is given by $q=\pi_m(y-x)$.
\end{rem} 

Let us define 
\begin{equation}\label{defB}\cB^q_{a,b}\!\!:=\left\{\xi:[a,b]\to \R^N\,|\xi\in W^{^{1,\infty}}\!\!\big((a,b)\big)\; \textrm{horizontal,} \,\xi(a)=l^\cX_q(a),\, \xi(b)=l^\cX_q(b)\right\}.
\end{equation}
For any interval $[a,b)$ we define the following stochastic process (similar to \cite{S1}):
\begin{equation}\label{miq}
\mu_q([a,b),\omega):=
\inf_{\xi\in \cB^q_{a,b}} \;\int_a^b L (\xi(s), \alpha^{\xi}(s),\omega)ds.
\end{equation}
To use the Sub-additive Ergodic Theorem, fixed the slope $q\in \R^m$, we need to consider the action of $\Z$ on the process $\mu_q$ as additive translation in time. More precisely:
\begin{definition}
\label{tau_z} Given $a,b\in \R$ with $a<b$, $q\in \R^m$ and $z\in \Z$, we define
$$\tau_z\,\mu_q([a,b),\omega):=\mu_q(z+[a,b),\omega)
=
\inf_{\xi\in\cB^q_{a+z,b+z}}\;\int_{a+z}^{b+z} L(\xi(s), \alpha^{\xi}(s),\omega)ds.
$$
\end{definition}
\begin{lemma}\label{lemma1}
For every $q\in\re^m$, $z\in \Z$, let $\mu_q$ be defined by \eqref{miq} and $\tau_z$ the additive action introduced  in Definition \ref{tau_z}. Under assumptions {\bf(L1)-(L4)}, we have
\begin{equation}\label{eqmi}
\tau_z\,\mu_q([a,b),\omega)=\mu_q([a,b),\tau_{z_q}\omega)
\end{equation}
where $z_q={l{^\cX}_{q}(z)}$  and $l^{\cX}_q$  is the  $\cX$-line  defined in Definition \ref{Defi_Lines}.
\end{lemma}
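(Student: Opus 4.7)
The idea is to construct an explicit bijection between the admissible classes $\cB^q_{a+z,b+z}$ (governing $\tau_z\mu_q([a,b),\omega)$) and $\cB^q_{a,b}$ (governing $\mu_q([a,b),\tau_{z_q}\omega)$) that transforms the cost integral in exactly the required way by means of left-translation by $z_q$ together with a time-shift by $z$.

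The first observation is a translation law for $\cX$-lines: I expect to show
$l^{\cX}_q(s+z)=z_q\circ l^{\cX}_q(s)$ for all $s$, where $z_q=l^{\cX}_q(z)$. Indeed, by Lemma \ref{leftinv}(i) the curve $s\mapsto z_q\circ l^{\cX}_q(s)$ is horizontal with constant velocity $\alpha(s)\equiv q$ starting at $z_q$; by Lemma \ref{leftinv}(ii) the curve $s\mapsto l^{\cX}_q(s+z)$ is also horizontal with constant velocity $q$ starting at $z_q$; uniqueness of $\cX$-lines (Remark \ref{RemarkLinearIndepedent}) yields the identity. As a consequence, $l^{\cX}_q(a+z)=z_q\circ l^{\cX}_q(a)$ and similarly at $b$.

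Next, given any $\eta\in\cB^q_{a+z,b+z}$, define
$$\widetilde\xi(s):=z_q^{-1}\circ\eta(s+z),\qquad s\in[a,b].$$
By Lemma \ref{leftinv}(i)(ii), $\widetilde\xi$ is horizontal with $\alpha^{\widetilde\xi}(s)=\alpha^{\eta}(s+z)$, and by the translation law above
$\widetilde\xi(a)=z_q^{-1}\circ l^{\cX}_q(a+z)=l^{\cX}_q(a)$ and $\widetilde\xi(b)=l^{\cX}_q(b)$, so $\widetilde\xi\in\cB^q_{a,b}$. The map $\eta\mapsto\widetilde\xi$ is a bijection onto $\cB^q_{a,b}$ with inverse $\widetilde\xi(\cdot)\mapsto z_q\circ\widetilde\xi(\cdot-z)$.

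Now apply stationarity (assumption {\bf(L4)}), which in the form induced by assumption {\bf(H4)} reads $L(z\circ x,q,\omega)=L(x,q,\tau_z\omega)$ for every $z,x\in\R^N$. With $z=z_q$ and $x=\widetilde\xi(s)$ this gives
\begin{equation*}
L\bigl(\eta(s+z),\alpha^{\eta}(s+z),\omega\bigr)=L\bigl(z_q\circ\widetilde\xi(s),\alpha^{\widetilde\xi}(s),\omega\bigr)=L\bigl(\widetilde\xi(s),\alpha^{\widetilde\xi}(s),\tau_{z_q}\omega\bigr),
\end{equation*}
where I have used associativity of $\circ$ so that $z_q\circ(z_q^{-1}\circ\eta(s+z))=\eta(s+z)$. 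Changing variables $s\mapsto s+z$ in the integral then yields
\begin{equation*}
\int_{a+z}^{b+z}L\bigl(\eta(s),\alpha^{\eta}(s),\omega\bigr)\,ds=\int_a^b L\bigl(\widetilde\xi(s),\alpha^{\widetilde\xi}(s),\tau_{z_q}\omega\bigr)\,ds.
\end{equation*}
Taking the infimum over $\eta\in\cB^q_{a+z,b+z}$ on the left and the corresponding infimum over $\widetilde\xi\in\cB^q_{a,b}$ on the right (equivalent under the bijection) gives \eqref{eqmi}. The only delicate point is the non-commutativity of the group: one must apply the stationarity identity precisely in the form $L(z\circ x,\cdot,\omega)=L(x,\cdot,\tau_z\omega)$ and not confuse left- with right-translations, which is why the horizontal velocity is unchanged by the left-translation step of Lemma \ref{leftinv}(i).
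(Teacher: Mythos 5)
Your proof is correct and takes essentially the same approach as the paper: you define the same transformation $\widetilde\xi(s)=z_q^{-1}\circ\eta(s+z)$, verify it maps $\cB^q_{a+z,b+z}$ bijectively onto $\cB^q_{a,b}$ using Lemma \ref{leftinv} and ODE uniqueness for $\cX$-lines, and then apply stationarity (L4) after the change of variables. The only cosmetic difference is that you isolate the identity $l^{\cX}_q(s+z)=z_q\circ l^{\cX}_q(s)$ as a preliminary translation law, whereas the paper verifies the endpoint conditions directly by comparing the two curves; the underlying uniqueness argument is the same.
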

\begin{proof}
For any $\xi\in \cB^q_{a+z,b+z}$ we consider   
\begin{equation*}
\label{scuolabus}
\widetilde{\xi}(s): =[l^\cX_q(z)]^{-1}\circ \xi (s+z).
\end{equation*}
By Lemma
\ref{leftinv} part (i),
$\widetilde{\xi}(s)$ is still horizontal and $\alpha^{\widetilde\xi}(s)=\alpha^{\xi}(s+z)$.\\
Moreover note that $\tilde \xi(a)=[l^\cX_q(z)]^{-1}\circ l^\cX_q(a+z)$ and $\tilde \xi(b)=[l^\cX_q(z)]^{-1}\circ l^\cX_q(b+z)$. \\
We claim that
\begin{equation}
\label{bumbum}
\widetilde{\xi}(a)=l^\cX_q(a)\quad  \textrm{and}\quad\widetilde{\xi}(b)=l^\cX_q(b).
\end{equation}
In fact, consider the two curves
 $l^\cX_q(s)$ and $\widetilde l^\cX_q(s):=[l^\cX_q(z)]^{-1}\circ l^\cX_q (s+z)$: both the curves start from the origin since 
$\widetilde l^\cX_q(0)=[l^\cX_q(z)]^{-1}\circ l^\cX_q (z)=0=l^\cX_q(0)$.
Moreover they have the same horizontal velocity since 
 $\alpha^{l^\cX_q}(s)=q$ (by definition) and $\alpha^{\widetilde l^\cX_q}(s)=\alpha^{l^\cX_q}(s+z)=q$ (by 
 Lemma
\ref{leftinv}). Hence by standard uniqueness for ODEs with smooth data, the two curves coincide and   in particular  \eqref{bumbum} holds.\\
This implies that   for each  $\xi\in \cB^q_{a+z,b+z}$, the curve $\widetilde{\xi}\in \cB^q_{a,b}$. Then, by the change of variable $ \widetilde s=s-z$,
\begin{align*}
\tau_z\,\mu_q([a,b),\omega)
&=
\inf_{\widetilde{\xi} \in  \cB^q_{a,b}} 
\int_{a+z}^{b+z} L([l^\cX_q(z)]\circ \widetilde{\xi} (s-z), \alpha^{\widetilde\xi}(s-z),\omega)ds\\
&=
\inf_{\widetilde\xi \in  \cB^q_{a,b}} 
\int_{a}^{b} L([l^\cX_q(z)]\circ \widetilde{\xi} (s), \alpha^{\widetilde\xi}(s),\omega)ds.
\end{align*}
We  set $z_q={l{^\cX}_{q}(z)}$  and use assumption {\bf(L4)}  to conclude
$$\tau_z\,\mu_q([a,b),\omega)=
\inf_{\xi \in \cB^q_{a,b}} \int_{a}^{b} L(\xi (s), \alpha^{\xi}(s),\tau_{z_q}\omega)ds
=\mu_q([a,b),\tau_{z_q}\omega).$$
\end{proof}
\begin{lemma}[Subadditivity] For $n\in \N$, there holds
$$
\mu_q([0,n),\omega)\le \sum_{k=1}^n\mu_q(k-1+[0,1),\omega).
$$
\end{lemma}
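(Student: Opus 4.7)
The plan is the standard concatenation argument for subadditive processes, adapted to horizontal curves. Fix $\eta>0$ and, for each $k=1,\dots,n$, choose a near-minimizer $\xi_k \in \cB^q_{k-1,k}$ such that
$$
\int_{k-1}^{k} L(\xi_k(s),\alpha^{\xi_k}(s),\omega)\,ds \leq \mu_q(k-1+[0,1),\omega)+\eta/n.
$$
By the definition of $\cB^q_{k-1,k}$ in \eqref{defB}, we have $\xi_k(k-1)=l^\cX_q(k-1)$ and $\xi_k(k)=l^\cX_q(k)$, so the endpoint of $\xi_k$ coincides with the starting point of $\xi_{k+1}$.

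Next I would concatenate these pieces into a single curve $\xi:[0,n]\to\R^N$ by setting $\xi(s):=\xi_k(s)$ for $s\in[k-1,k)$ and $\xi(n):=l^\cX_q(n)$. Since the pieces agree at the interfaces, $\xi$ is continuous; since each $\xi_k\in W^{1,\infty}$ and there are finitely many pieces, $\xi$ is absolutely continuous (in fact $W^{1,\infty}$ on $(0,n)$), with horizontal velocity $\alpha^{\xi}(s)=\alpha^{\xi_k}(s)$ for $s\in(k-1,k)$ defined a.e. Moreover $\xi(0)=l^\cX_q(0)$ and $\xi(n)=l^\cX_q(n)$, hence $\xi\in\cB^q_{0,n}$.

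Using additivity of the integral over the partition $[0,n)=\bigcup_{k=1}^n[k-1,k)$ and the definition of $\mu_q([0,n),\omega)$ as an infimum, I obtain
$$
\mu_q([0,n),\omega) \leq \int_0^n L(\xi(s),\alpha^{\xi}(s),\omega)\,ds = \sum_{k=1}^n \int_{k-1}^k L(\xi_k(s),\alpha^{\xi_k}(s),\omega)\,ds \leq \sum_{k=1}^n \mu_q(k-1+[0,1),\omega)+\eta.
$$
Letting $\eta\to 0^+$ yields the claim. No step is really an obstacle here; the only point worth checking carefully is that glueing horizontal curves with matched endpoints produces an admissible horizontal curve, which is immediate because the horizontal constraint \eqref{EQ_Horizontal} is a pointwise a.e. condition preserved under concatenation.
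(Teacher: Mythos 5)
Your proof is correct and takes essentially the same approach as the paper: concatenate admissible (near-minimizing) curves on each subinterval, using that they all agree with the fixed $\cX$-line $l^\cX_q$ at integer times, and appeal to the definition of the infimum. The paper states this more tersely without spelling out the $\eta/n$ near-minimizer step, but the argument is identical.
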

\begin{proof}
If $\xi_k\in \cB^q_{k-1,k},$ then we can construct a continuous horizontal curve $\xi$  in  $ \cB^q_{0,n},$ such that
$\xi(s)=\xi_k(s)$ for $s\in [k-1,k],$ $k\in \{1,\ldots,n\}.$ The claim follows  from the definition of the infimum.
\end{proof}
Under assumptions {\bf(L1)-(L4)} the 
Subadditive Ergodic Theorem applies to the process defined in
 \eqref{miq}. 
We will use it in the following form, which is taken from \cite[Prop. 1]{DalMasoModica}, based on Akcoglu and Krengel's theorem, \cite{AK}. We state it for one dimension. First,  we get the existence of a limit which may still depend on $\omega$ and then we use  ergodicity to show independence of $\omega.$ From \cite{DalMasoModica}  we recall:
\begin{defi}
We denote by ${\mathcal U}_0$ the family of all bounded measurable subsets of $\R$. For $A\in {\mathcal U}_0$, its Lebesgue measure is $|A|$.
We denote by ${\mathcal M}$ the family of subadditive functions $m:{\mathcal U}_0\to \R$ such that, for some $c>0$, there holds
$$
0\leq m(A)\leq c|A| \qquad \forall A\in {\mathcal U}_0.
$$
\end{defi}

\begin{theorem}[{\bf Subadditive Ergodic Theorem, \cite{DalMasoModica}}]
\label{DMM}
Let $\mu:\Omega\to {\mathcal M}$ be a subadditive process. If $\mu$ and $\tau_x(\mu)$ have the same law for every $x\in \mathbb Z,$ then there exists a set of full measure $\Omega'$ and a measurable function~$\phi$ such that on $\Omega'$
$$
\lim_{t\to \infty}t^{-1}|I|^{-1}\mu(\omega)(tI)=\phi(\omega)
$$ for every interval $I,$ where $|I|$ denotes its length.
\end{theorem}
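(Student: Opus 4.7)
The plan is to reduce the continuous-parameter statement to the classical subadditive ergodic theorem of Kingman (or equivalently the one-dimensional Akcoglu--Krengel theorem, \cite{AK}) applied at integer scale, and then to interpolate to arbitrary real intervals by a sandwich argument that exploits the uniform growth bound $m(A)\le c|A|$ to absorb boundary defects.

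First I would work at the integer scale. Setting $a_n(\omega):=\mu(\omega)([0,n))$ for $n\in \N$, subadditivity of $\mu$ together with the law-invariance of $\mu$ under $\tau_z$ for $z\in\Z$ gives
\[
a_{n+m}(\omega)\le a_n(\omega)+a_m(\tau_n\omega)\quad \text{a.s.,}
\]
while the membership $\mu\in\mathcal{M}$ yields $0\le a_n\le cn$. This is exactly Kingman's setup for the measure-preserving system $(\Omega,\mathcal{F},\dP,\tau_1)$ and produces a $\tau_1$-invariant measurable $\phi:\Omega\to\R$ and a set $\Omega'$ of full measure on which $a_n(\omega)/n\to\phi(\omega)$. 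Stationarity and the identity $\phi\circ\tau_1=\phi$ then extend the convergence to any integer translate: $n^{-1}\mu(\omega)([k,k+n))\to\phi(\omega)$ for every $k\in\Z$.

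Next I would pass from integer intervals to an arbitrary interval $I=[a,b)$ by a sandwich as $t\to\infty$. Setting $k_-:=\lfloor ta\rfloor$, $k_+:=\lceil tb\rceil$ and $k'_-:=\lceil ta\rceil$, $k'_+:=\lfloor tb\rfloor$, the inclusions $[k'_-,k'_+)\subseteq tI\subseteq[k_-,k_+)$ combined with the subadditivity of $\mu$ and the growth estimate $m(A)\le c|A|$ (which bounds each of the four boundary pieces by $c$) give
\[
\mu(\omega)([k_-,k_+))-2c\le\mu(\omega)(tI)\le\mu(\omega)([k'_-,k'_+))+2c.
\]
Dividing by $t|I|$, noting that $(k_+-k_-)/t\to|I|$ and $(k'_+-k'_-)/t\to|I|$, and invoking the integer-interval convergence from the first step, both bounds tend to $\phi(\omega)$ on $\Omega'$, which is the desired conclusion.

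The main obstacle is the first step, namely verifying that the almost-sure subadditivity in the Kingman form transfers from the assumed law-equality of $\mu$ and $\tau_z\mu$, and setting up the correct measurable $\tau_1$-action on $\Omega$ so that the hypotheses of the discrete subadditive ergodic theorem are matched. Once Kingman's theorem is in place at integer scale, the continuous-parameter extension is essentially automatic thanks to the uniform linear bound on $\mu$, and the argument is completely insensitive to the particular endpoints of $I$.
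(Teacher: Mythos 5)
The paper does not prove Theorem~\ref{DMM}: it is quoted verbatim from \cite{DalMasoModica} (Prop.~1), who in turn invoke Akcoglu and Krengel's subadditive ergodic theorem \cite{AK}. So there is no "paper proof" to compare against; your proposal has to stand on its own, and it contains a genuine gap at the interpolation step.

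Your first step is essentially fine, and in fact your flagged worry there is not the real obstacle: Kingman's theorem does \emph{not} require a pointwise measure-preserving transformation realizing the shift. Its standard hypotheses are on the two-parameter array $X_{m,n}(\omega):=\mu(\omega)([m,n))$, namely (i) $X_{0,n}\le X_{0,m}+X_{m,n}$, (ii) the joint law of $\{X_{m+1,n+1}\}$ equals that of $\{X_{m,n}\}$, and (iii) an integrability condition, here supplied by $0\le\mu\le c|\cdot|$. All three follow directly from subadditivity and from "$\mu$ and $\tau_z\mu$ have the same law for $z\in\Z$." This yields $n^{-1}\mu(\omega)([0,n))\to\phi(\omega)$ a.s. without any need to manufacture a $\tau_1$-action on $\Omega$.

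The real gap is in the sandwich. For a general interval $I=[a,b)$ with $a\ne 0$, the integer endpoints $k_-=\lfloor ta\rfloor$, $k_+=\lceil tb\rceil$ both tend to infinity with $t$, so the reduction is to
\[
\frac{\mu(\omega)([k_t,\,k_t+m_t))}{m_t}\qquad\text{with } k_t\to\infty,\ m_t\to\infty,\ k_t/m_t\to a/(b-a).
\]
Your justification — that $n^{-1}\mu(\omega)([k,k+n))\to\phi(\omega)$ for each \emph{fixed} $k\in\Z$ — does not control such a diagonal sequence, because the rate of convergence may depend on $k$ and here $k$ is not fixed. Subadditivity alone only gives the easy half: from $\mu([0,tb))\le\mu([0,ta))+\mu([ta,tb))$ one reads off $\liminf_{t}\mu(\omega)(tI)/(t|I|)\ge\phi(\omega)$. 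The matching $\limsup$ inequality for off-centered intervals is precisely the hard part of the Akcoglu--Krengel theorem (their framework of convergence along "regular" sequences of sets), and it is not recovered by the naive covering $\mu([ta,tb))\le\sum_k\mu([k,k+1))$ either, since Birkhoff applied to $k\mapsto\mu(\cdot)([k,k+1))$ yields $E[\mu([0,1))\,|\,\mathcal I]$, which in general strictly exceeds $\phi$. To close the argument you would need either to invoke Akcoglu--Krengel directly (as the paper does through \cite{DalMasoModica}), or to supply the missing $\limsup$ bound — for instance by identifying the a.s.\ limits $\phi_k:=\lim_n n^{-1}\mu(\cdot)([k,k+n))$ with $\phi$ using the one-sided inequality $\phi_k\ge\phi$ together with equality of laws and finiteness of $E[\phi]$, and then proving uniformity in $k$ over the relevant range. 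As written, that step is missing.
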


We look at the points $y\in V_0\subset \R^N$. Let us recall from Definition \ref{defG} that we write $y=(y^1, y^2)\in \R^m\times\R^{N-m}$ and $y^1=\pi_m(y)$. If $y\in V_0$ then $y^2=y^2(y^1)\in\R^{N-m} $ where $y^2(\cdot)$ is a $(N-m)$-dimensional  valued function associated to the vector fields. E.g. in the case of the $n$-dimensional Heisenberg group $N=2n+1$ and $m=2n$ so  $y^2=0\in \R$, for all $y^1\in \R^{2n}$. (See \cite[Lemma 2.2]{BD2} for more details).\\
We are now ready to give a first  pointwise convergence result. 
Note that, differently from the Euclidean case, the following theorem gives the asymptotic behaviour of
$L^{\varepsilon}(0,y,t,\omega)$ only under the additional $(N-m)$-dimensional constraint expressed by  $y\in V_0$.

\begin{theorem}
\label{convzero}
Under assumptions {\bf(L1)-(L4)}, for each $t>0$ and $y\in V_0$ fixed,
\begin{enumerate}
\item The following limit exists a.s. in $\omega$ 
\begin{equation}
\label{limitvalueconstrained}
L^{\varepsilon}(y,0,t,\omega)
\longrightarrow^{\varepsilon\to 0^+}
t\;\overline{\mu}\left(\,\frac{y^1}{t}, \omega \right), 
\end{equation}
where $\overline{\mu}:\R^m\times \Omega\rightarrow \R$ is a measurable function.
\item The limit value $\overline{\mu}$ in \eqref{limitvalueconstrained} is constant in $\omega.$
\end{enumerate}
\end{theorem}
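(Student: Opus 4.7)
The plan is to identify $L^\varepsilon(y,0,t,\omega)$ with a rescaled version of the one-dimensional subadditive process $\mu_q$ defined in \eqref{miq}, for the slope $q=y^1/t$, and then invoke Theorem~\ref{DMM}. Ergodicity of the full group action will be used only at the end, to upgrade the almost sure limit to a deterministic one.

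First I would establish a scaling identity. Since $y\in V_0$, using Lemma~\ref{leftinv}(ii)--(iii) one checks that the $\cX$-line with velocity $q:=y^1/t$ satisfies $l^\cX_q(t)=y$ and $l^\cX_q(t/\varepsilon)=\delta_{1/\varepsilon}(y)$. The rescaling
\[
\eta(\tau):=\delta_{1/\varepsilon}(\xi(\varepsilon\tau)),\qquad \tau\in [0,t/\varepsilon],
\]
combined with Lemma~\ref{leftinv}(ii)--(iii), gives $\alpha^\eta(\tau)=\alpha^\xi(\varepsilon\tau)$ and provides a bijection between $\cA^t_{0,y}$ and $\cB^{q}_{0,t/\varepsilon}$ (the inverse being $\xi(s)=\delta_\varepsilon(\eta(s/\varepsilon))$). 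A change of variables in the integrand then yields
\[
L^\varepsilon(y,0,t,\omega)=\varepsilon\,\mu_{y^1/t}\big([0,t/\varepsilon),\omega\big).
\]

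For part (1), I would check that $\mu_q$ satisfies the hypotheses of Theorem~\ref{DMM}: subadditivity is the content of the lemma stated just before that theorem; the upper bound $\mu_q([a,b),\omega)\le c(q)(b-a)$ follows from testing with the $\cX$-line $l^\cX_q$ itself and using {\bf(L2)}; nonnegativity is obtained by adding a constant to $L$, an adjustment explicitly used in the proof of Lemma~\ref{NDrem}. Moreover, Lemma~\ref{lemma1} together with the fact that each $\tau_{z_q}$ preserves $\mathbb{P}$ implies that $\mu_q$ and $\tau_z\mu_q$ have the same law for every $z\in\Z$. Theorem~\ref{DMM} then produces a measurable function $\overline\mu(q,\cdot)$ and a full-measure event on which $T^{-1}\mu_q([0,T),\omega)\to \overline\mu(q,\omega)$. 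Taking $T=t/\varepsilon$ in the scaling identity above yields~\eqref{limitvalueconstrained}.

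For part (2), the main obstacle is that Theorem~\ref{DMM} only gives invariance of $\overline\mu(q,\cdot)$ under the one-parameter action $\{\tau_{z_q}\}_{z\in\Z}$, whereas the ergodicity hypothesis in {\bf(L4)} concerns the full group action. To bridge this, I would use stationarity in the form $L(x,p,\tau_v\omega)=L(x\circ v,p,\omega)$, together with the homomorphism property $\delta_{1/\varepsilon}(x)\circ v=\delta_{1/\varepsilon}(x\circ \delta_\varepsilon(v))$ and Lemma~\ref{leftinv}(i). Performing the change of path $\xi\mapsto \xi\circ \delta_\varepsilon(v)$ then yields the identity
\[
L^\varepsilon(y,0,t,\tau_v\omega)=L^\varepsilon\big(y\circ \delta_\varepsilon(v),\,\delta_\varepsilon(v),\,t,\,\omega\big).
\]
Since $\|\delta_\varepsilon(v)\|_h=\varepsilon\|v\|_h\to 0$ as $\varepsilon\to 0^+$, the local uniform continuity from Theorem~\ref{UniformContinuityFunctional} forces the right-hand side to have the same limit as $L^\varepsilon(y,0,t,\omega)$. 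Applying part~(1) to both sides then gives $\overline\mu(q,\tau_v\omega)=\overline\mu(q,\omega)$ almost surely, for every $v$. Thus $\overline\mu(q,\cdot)$ is invariant under the full group action, and the ergodicity in {\bf(L4)} forces it to be a.s.\ constant, which is precisely part~(2).
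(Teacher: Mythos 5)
Your Part~1 is essentially the paper's argument: establish the scaling identity $L^\varepsilon(y,0,t,\omega)=\varepsilon\,\mu_{y^1/t}([0,t/\varepsilon),\omega)$ using the bijection $\xi\mapsto\delta_{1/\varepsilon}(\xi(\varepsilon\,\cdot))$ between $\cA^t_{0,y}$ and $\cB^q_{0,t/\varepsilon}$, check the hypotheses of Theorem~\ref{DMM}, and conclude. This is fine.

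Part~2 has a genuine gap. The change of path $\xi(s)\mapsto\xi(s)\circ\delta_\varepsilon(v)$ is a \emph{right} translation, and in a non-abelian Carnot group right translations do \emph{not} preserve the horizontal distribution (which is generated by \emph{left}-invariant vector fields), so they do not map horizontal curves to horizontal curves and do not preserve the horizontal velocity. Lemma~\ref{leftinv}(i), which you cite, covers only left translations $\xi(s)\mapsto z\circ\xi(s)$. For instance in $\dH^1$ the curve $\xi(s)=(s,0,0)$ is horizontal, but $\xi(s)\circ(0,1,0)=(s,1,s/2)$ has $\dot\xi=(1,0,1/2)$ which is not in $\Span\{X_1,X_2\}$ at those points. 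Consequently the claimed identity $L^\varepsilon(y,0,t,\tau_v\omega)=L^\varepsilon(y\circ\delta_\varepsilon(v),\delta_\varepsilon(v),t,\omega)$ is not established by this argument, and in fact is not correct as written (from~\eqref{gabbiani} one gets $L^\varepsilon(y\circ\delta_\varepsilon(v),\delta_\varepsilon(v),t,\omega)=L^\varepsilon(-\delta_\varepsilon(v)\circ y\circ\delta_\varepsilon(v),0,t,\tau_v\omega)$, where a conjugation, not your $y$, appears).

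The underlying idea can be repaired, and then it gives a genuinely different route from the paper's. Use the stationarity in the form compatible with left-translation of horizontal curves — the same form that makes the paper's derivation of~\eqref{gabbiani} go through — namely $L(x,p,\tau_v\omega)=L(v\circ x,p,\omega)$, then left-translate $\xi\mapsto\delta_\varepsilon(v)\circ\xi$ (legitimately invoking Lemma~\ref{leftinv}(i) and $\delta_{1/\varepsilon}(\delta_\varepsilon(v)\circ\xi(s))=v\circ\delta_{1/\varepsilon}(\xi(s))$). This gives the correct identity $L^\varepsilon(y,0,t,\tau_v\omega)=L^\varepsilon(\delta_\varepsilon(v)\circ y,\delta_\varepsilon(v),t,\omega)$. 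Since both $\|\delta_\varepsilon(v)\|_{CC}=\varepsilon\|v\|_{CC}\to 0$ and $d_{CC}(\delta_\varepsilon(v)\circ y,y)\to 0$, the uniform continuity of $L^\varepsilon$ (uniform in $\varepsilon$) concludes $\overline\mu(q,\tau_v\omega)=\overline\mu(q,\omega)$. By comparison, the paper works directly at the level of $\mu_q$: it writes $\mu_q([0,t/\varepsilon),\tau_z\omega)$ as an infimum over the left-translated class $\cB^{q,z}$, then compares the two infima by splicing short geodesics between the displaced endpoints and invoking Lemma~\ref{replacement}. Your (repaired) version is shorter and avoids the endpoint-splicing, at the price of invoking Theorem~\ref{UniformContinuityFunctional}; both routes end up leaning on the uniform modulus of continuity in~$x$ established in Section~4.
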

\begin{proof} {\em We first prove part 1:}
fix $q\in\R^m,$ the first step is to prove part 1 by  applying the Subadditive Ergodic Theorem \ref{DMM} to $\mu_{q}$, which tells that
\begin{equation}\label{convergenza1}
\frac{1}{\tau}\mu_{q}([0,\tau),\omega)\longrightarrow^{\tau\to +\infty} \overline{\mu}(q,\omega), \ a.s.\ \omega\in \Omega. 
\end{equation}
Note that the definition of $\mu_{q}$ involves only a one-dimensional subgroup of translations, $\{\tau_{\ell^\cX_q(z)}\}_{z\in\Z},$  the subgroup that leaves  invariant the ${\mathcal X}$-line with direction $q$, passing through the origin.

Now we  rewrite the functional $L^{\varepsilon}(y,0,t,\omega)$ defined by \eqref{Lepsilon} in terms of $\mu_{q}\big([a,b),\omega\big)$: let us prove
$$
L^{\varepsilon}(y,0,t,\omega)=\varepsilon\mu_{{\frac{y^1}{t}}}([0,\varepsilon^{-1}t),\omega).
$$
For any  $\xi \in \cA^t_{0,y}$, we define
$\widetilde{\xi}(s):=\delta_{1/\varepsilon}(\xi(s))$. 
 Using  Lemma \ref{leftinv}, part (iii) 
\begin{equation*}
L^{\varepsilon}(y,0,t,\omega)=\inf_{\widetilde{\xi}\in \cA^t_{0,y_{\varepsilon}}} \int_0^t L(\widetilde{\xi}(s), \varepsilon\alpha^{\widetilde{\xi}}(s),\omega)ds,
\end{equation*}
where $y_{\varepsilon}= \delta_{1/\varepsilon}(y)$. By the change of variable
$\widetilde{s}=s/\varepsilon$  (which we call again $s$) the previous identity becomes
\begin{equation*}
L^{\varepsilon}(y,0,t,\omega)=\varepsilon\inf_{\widetilde{\xi}\in \cA^{t/{\varepsilon}}_{0,y_{\varepsilon}}} 
\int_0^{t/\varepsilon} L(\widetilde{\xi}(\varepsilon s), \varepsilon\alpha^{\widetilde{\xi}}(\varepsilon s),\omega)d s.
\end{equation*}
Take  now $\eta( s):= \widetilde{\xi}(\varepsilon \, s)$ and note that by Lemma \ref{leftinv} $\eta(s)$ is still a horizontal curve and 
  $\alpha^{\eta}(s)=\varepsilon\,\alpha^{\widetilde{\xi}}(\varepsilon s)$, $\eta(0)=\delta_{1/\varepsilon}(y)$ and $\eta(t/\varepsilon)=0$. Then
\begin{equation}\label{Dean}
L^{\varepsilon}(y,0,t,\omega)=\varepsilon\inf_{\eta\in \cA^{t/\varepsilon}_{0,y_{\varepsilon}}} \int_0^{t/\varepsilon} L(\eta({s}),\alpha^{\eta}(s),\omega)\, d{s}.
\end{equation}
Now fix $t>0$, $y\in V_0$ and $y^1=\pi_m(y)$.
To use the convergence result in \eqref{convergenza1} it remains to show that 
$$
\cA_{0,y_{\varepsilon}}^{t/\varepsilon}=\cB^q_{a,b}\;\textrm{with}\; q=\frac{y^1}{t},\; a=0\; \textrm{and}\; b=\frac{t}{\varepsilon}.
$$
For this purpose, consider the 
$\cX$-line joining $0$ to $y$ with constant horizontal velocity $q_i=\frac{y_i}{t}$ for $i=1,\dots,m$, i.e. $l^\cX_q$ is the unique solution of 
$$\dot{l}^\cX_q(s)=\sum_{i=1}^{m}
\frac{y_i}{t} X_i(l^\cX_q(s)),\quad
l^\cX_q(0)=0,$$
(recall that $l^\cX_q(t)=y$).\\
{\em CLAIM:}  for all constant $C>0$
\begin{equation}\label{chiline}
l^\cX_q(Ct)= \delta_C\big(l^\cX_q(t)\big)= \delta_C(y).
\end{equation}
To prove claim \eqref{chiline}, let us  introduce the two curves
$l_1(s):=l^\cX_q(Cs)$ and $l_2(s):=\delta_C(l^\cX_q(s))$.
Note that  by Lemma \ref{leftinv}, parts (ii) and (iii), we have
$$l_1(0)=l_2(0)=0, \quad 
\alpha^{l_1}(s)=C\alpha^l= C\frac{y^1}{t}\quad \textrm{and}\quad
\alpha^{l_2}(s)=C\alpha^l= C\frac{y^1}{t}.$$
 This means that $l_1(\cdot)$ and $l_2(\cdot)$ both solve the ODE problem
$$\dot{x}(s)=\sum_{i=1}^{m}C\frac{y_i^1}{t} X_i(x(s)),\quad \quad x(0)=0.$$
By standard uniqueness for  ODEs with smooth data, we deduce 
$l_1(s)=l_2(s)$. This implies in particular $l_1(t)=l_2(t)$ which gives \eqref{chiline}. 
Note that here is crucial that the horizontal velocity of the two curves $l_1$ and $l_2$ is constant in time.\\

The claim \eqref{chiline} implies that $\cA_{0,y_{\varepsilon}}^{t/\varepsilon}=\cB^q_{0,t/\varepsilon}$ with $q=\frac{y^1}{t}$, thus equation \eqref{Dean} gives
\begin{equation}\label{piccione}
L^{\varepsilon}(y,0,t,\omega)= \varepsilon\mu_{{\frac{y^1}{t}}}([0,t/\varepsilon),\omega).
\end{equation}
So fixed $t>0$,  $y\in V_0$, $y^1=\pi_m(y)$ and $q=\frac{y^1}{t}$, we can rewrite  \eqref{convergenza1} as  
\begin{equation}
L^{\varepsilon}(y,0,t,\omega)=\varepsilon\mu_{\frac{y^1}{t}}([0,t/\varepsilon),\omega)\longrightarrow^{\varepsilon\to 0^+}
t\overline{\mu}\left(\frac{y^1}{t},\omega\right), \ a.s.\ \omega\in \Omega.
\end{equation}

\noindent
{\em We now prove part 2:} we show the independence of $\overline{\mu}$ from $\omega.$

Fix $z\in\R^N$ and define $l_q^z(s):=-z\circ l^{\cX}_q(s),$ then by Lemma \ref{leftinv} this is still an ${\mathcal X}$-line.
We have by  stationarity of the coefficients
$$
t\overline{\mu}\left(\frac{y^1}{t},\tau_z(\omega)\right)=\lim_{\varepsilon\to 0^+}\varepsilon\mu_{\frac{y^1}{t}}([0,t/\varepsilon),\tau_z(\omega)),
$$ 
and by  \eqref{miq} and stationarity
\begin{eqnarray*}
\mu_{\frac{y^1}{t}}([0,t/\varepsilon),\tau_z(\omega))&=&
\inf_{\xi\in \cB^q_{0,t/\varepsilon}} \;\int_0^{t/\varepsilon} L (\xi(s), \alpha^{\xi}(s),\tau_z(\omega))ds\\
 &=&\inf_{\xi\in \cB^q_{0,t/\varepsilon}} \;\int_0^{t/\varepsilon} L (-z\circ \xi(s), \alpha^{\xi}(s),\omega)ds\\
&=&\inf_{\overline{\xi}\in \cB^{q,z}_{0,t/\varepsilon}} \;\int_0^{t/\varepsilon} L (\overline{\xi}(s), \alpha^{\overline{\xi}}(s),\omega)ds
\end{eqnarray*}
where
\begin{multline*}
 \cB^{q,z}_{0,t/\varepsilon}:=\\
 \left\{\overline{\xi}:[a,b]\to \R^N\,|\overline{\xi}\in W^{^{1,+\infty}}\!\!\!\big((a,b)\big)\; \textrm{horiz,} \,\overline{\xi}(a)=-z\circ l^\cX_q(a),\, \overline{\xi}(b)=-z\circ l^\cX_q(b)\right\}.
\end{multline*}
We have to show
\begin{equation}\label{limitinv}
\lim_{\varepsilon\to 0^+}\varepsilon\inf_{\xi\in \cB^q_{0,t/\varepsilon}} \;\int_0^{t/\varepsilon} L (\xi(s), \alpha^{\xi}(s),\omega)ds
=\lim_{\varepsilon\to 0^+}\varepsilon\inf_{\overline{\xi}\in \cB^{q,z}_{0,t/\varepsilon}} \;\int_0^{t/\varepsilon} L (\overline{\xi}(s), \alpha^{\overline{\xi}}(s),\omega)ds
\end{equation}
for all $z\in \R^N,$ then $ \overline{\mu}\left(\frac{y^1}{t},\tau_z(\omega)\right)=\overline{\mu}\left(\frac{y^1}{t},\omega\right),$ so by ergodicity w.r.t to the group action 
 $\overline{\mu}\left(\frac{y^1}{t},\omega\right) $ does not depend on $\omega.$\\
We show that both infima have the same limit by connecting the endpoints $x_1:=l^\cX_q(a)$ to $y_1:=-z\circ l^\cX_q(a),$ $x_2:=l^\cX_q(b)$ to $y_2:=-z\circ l^\cX_q(b)$ by geodesics of length of order $C(q)\norma{z}_{CC}.$ Indeed, by Lemma \ref{replacement}
the difference of the cost disappears  in the limit $\varepsilon\to 0.$ (See Figure \ref{Figura-A}.)\\
 This means any path in $\cB^{q,z}_{0,t/\varepsilon}$ can be made into a path in $\cB^{q}_{0,t/\varepsilon}$ by paying a cost of order $|z|$ (for a similar argument, we refer the reader also to the proof of Lemma~\ref{LemmaLiminfnuovo}).
This extra cost vanishes in the limit after multiplication by $\varepsilon.$
\end{proof}
\begin{figure} [htbp]
\begin{center}
\includegraphics[width=3.0 in, height=5.2 in, angle=90]{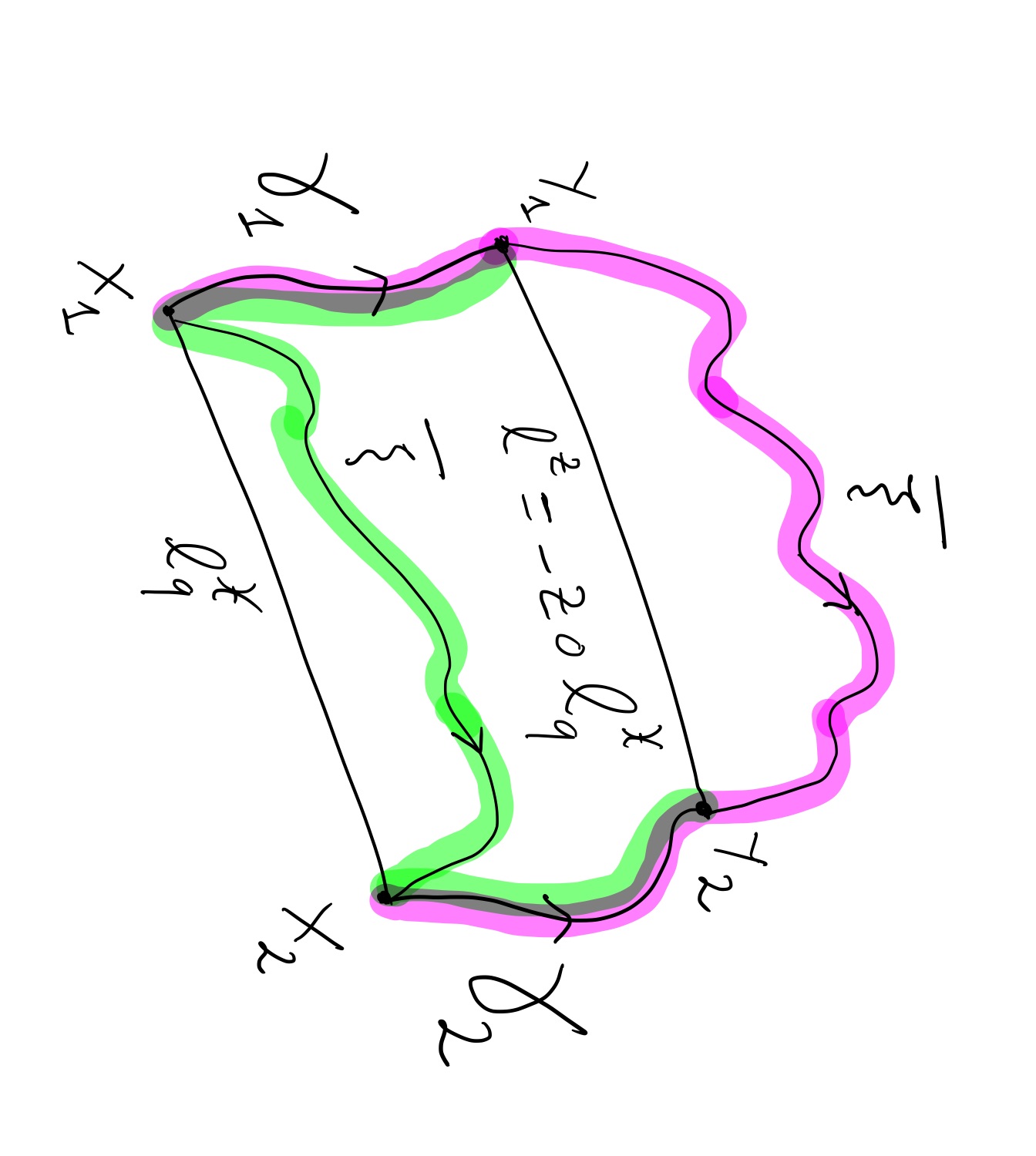}
\end{center}  
\caption{In the picture are drawn two $\cX$-lines with constant horizontal velocity $q$ connecting respectively $x_1$ with $x_2$ and $y_1=-z\circ x_1$ with $y_2=-z\circ x_2$. Then $\xi$ and $\bar{\xi}$ are respectively admissible curves touching at the two couple of points.}
  \label{Figura-A}
\end{figure}


\begin{remark}
Note that in the case of {\em short-correlated} random coefficients the proof of part 2 is  unnecessary, as the independence of $\omega$ follows already from the fact that the restriction of the random field to a $\cX$-line is again short-correlated and the zero-one-law for independent random variables.
\end{remark}
\begin{remark} Note that the convergence result \eqref{limitvalueconstrained} means  that, for all $t>0$ and $y\in V_0$ fixed,
there exists $\Omega^{t,y}
\subset \Omega$ with $\dP(\Omega^{t,y})=1$ such that
$L^{\varepsilon}(y,0,t,\omega)\rightarrow 
t\overline{\mu}\left(\frac{\pi_m(y)}{t}\right)$,  for all $\omega\in \Omega^{t,y}$. This convergence result is enough to define the effective Lagrangian but it is still too weak to obtain the convergence of the solutions of the homogenization problem.
\end{remark}
\begin{defi}[Effective Lagrangian]
We define the effective Lagrangian \\$\overline{L}:\R^m\to \R$  as
\begin{equation}
\label{EffectiveLagrangian}
\overline{L}(q):=\overline\mu(q)=\lim_{\varepsilon\to 0^+} L^{\varepsilon}\big((q,y_q^2),0,1,\omega\big),
\end{equation}
where the point $y_q^2\in \R^{N-m}$ is uniquely determined by $y^1=q\in \R^m$ for all points $(q,y_q^2)\in V_0\subset \R^N$.
\end{defi}
\begin{ex}[Heisenberg group]
In the 1-dimensional Heisenberg group there holds: 
$\overline{L}(q_1,q_{2}):=\lim_{\varepsilon\to 0^+} L^{\varepsilon}\big((q_1,q_2,0),0,1,\omega\big)$.
\end{ex}
 Using the definition of effective Lagrangian introduced in \eqref{EffectiveLagrangian} we can rewrite the limit in Theorem \ref{convzero} as follows: for all $y\in V_0$  and $t>0$ fixed, there exists a set $\Omega^{t,y}\subset \Omega$ with $\dP( \Omega^{t,y})=1$ such that
\begin{equation}
\label{limiteZero1}
\lim_{\varepsilon\rightarrow 0^+} L^{\varepsilon}(y,0,t,\omega)=t\overline{L}\left(\frac{\pi_m(y)}{t}\right),\quad \forall\; \omega\in \Omega^{t,y}.
\end{equation}
Next we want to derive the local uniform  convergence for $L^{\varepsilon}(x,y,t,\omega)$ under the constraint $y\in V_x$.  
The following proof is a simple adaptation of the ideas developed by Souganidis in \cite{S1} and later by the same author and co-authors in \cite{{Armstrong-Souga1-Carda},{Armstrong-Souga2},{Armstrong-Souga3}}. 
The main difference is that we work
 directly with the functional $ L^{\varepsilon}(x,y,t,\omega)$ and not with the solutions $u^{\varepsilon}(t,x)$.
 This will guarantee in once both the uniform convergence in $y$ (essential to pass to the infimum in the limit) and the uniform convergence in $x$ and $t$ (that will allow to apply our approximation argument in Section \ref{SecApproxX-lines}).

\begin{theorem}\label{Lsegnato}
Under assumptions {\bf(L1)-(L4)}   and {\bf (L6)} and the additional constraint $x\in V_y$, we have that
\begin{equation}
\label{UniformLimit}
\lim_{\varepsilon\rightarrow 0^+} L^{\varepsilon}(x,y,t,\omega)=
t\overline L \left(\frac{\pi_m(-y\circ x)}{t}\right)
\end{equation}
locally uniformly in $x,y,t$ and a.s. $\omega$, where $\overline{L}$ is the effective Lagrangian defined by  \eqref{EffectiveLagrangian}.
\end{theorem}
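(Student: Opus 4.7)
The strategy decomposes into two stages: pointwise almost-sure convergence at each fixed triple $(x,y,t)$ with $x\in V_y$, followed by an upgrade to locally uniform convergence via the equi-uniform-continuity estimates of Theorem \ref{UniformContinuityFunctional}.

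For the pointwise step, fix $(x,y,t)$ with $x\in V_y$ and set $q:=\pi_m(-y\circ x)/t$. Left-translation by $-y$ (Lemma \ref{leftinv}(i)), the multiplicative property of the dilation $\delta_{1/\varepsilon}(y\circ z)=\delta_{1/\varepsilon}(y)\circ\delta_{1/\varepsilon}(z)$, and the stationarity assumption {\bf (L4)} yield the identity
\begin{equation*}
L^{\varepsilon}(x,y,t,\omega)=L^{\varepsilon}(-y\circ x,0,t,\tau_{\delta_{1/\varepsilon}(y)}\omega).
\end{equation*}
Since $-y\circ x\in V_0$ whenever $x\in V_y$, Theorem \ref{convzero} combined with the measure-preserving property of $\tau$ yields convergence in probability to the deterministic limit $t\overline L(q)$. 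To promote this to almost-sure convergence, I would redo the sub-additive ergodic argument of Theorem \ref{convzero} directly at the shifted base by defining
\begin{equation*}
\mu_q^y([a,b),\omega):=\inf\left\{\int_a^b L(\xi(s),\alpha^\xi(s),\omega)\,ds \;:\; \xi(a)=y\circ l^{\cX}_q(a),\ \xi(b)=y\circ l^{\cX}_q(b)\right\}.
\end{equation*}
The translation identity $\mu_q^y([a,b),\omega)=\mu_q([a,b),\tau_y\omega)$ (from Lemma \ref{leftinv}(i) and {\bf (L4)}) together with the one-parameter subgroup property $l^{\cX}_q(z_1+z_2)=l^{\cX}_q(z_1)\circ l^{\cX}_q(z_2)$ makes $z\mapsto y\circ l^{\cX}_q(z)\circ y^{-1}$ a group homomorphism $(\Z,+)\to(\R^N,\circ)$, so $\mu_q^y$ is sub-additive and stationary under the associated $\Z$-action on $\Omega$. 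Theorem \ref{DMM} gives a.s. convergence $T^{-1}\mu_q^y([0,T),\omega)\to \overline L_y(q,\omega)$; ergodicity with respect to the full Carnot group action, combined with the geodesic endpoint-correction argument of part 2 of Theorem \ref{convzero} (with Lemma \ref{replacement} controlling the correction cost), identifies $\overline L_y(q,\omega)=\overline L(q)$. The dilation/rescaling computation leading to \eqref{piccione}, repeated at base $y$, rewrites $L^{\varepsilon}(x,y,t,\omega)$ as $\varepsilon\mu_q^{\delta_{1/\varepsilon}(y)}([0,t/\varepsilon),\omega)$, and the base-independence of the a.s. limit ensures convergence to $t\overline L(q)$ even along the moving sequence of bases $\{\delta_{1/\varepsilon}(y)\}_{\varepsilon>0}$.

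For the second stage I would choose a countable dense set $\mathcal{D}$ of rational triples $(x_k,y_k,t_k)$ with $x_k\in V_{y_k}$ and intersect the corresponding a.s. sets from the pointwise step to obtain a single event $\Omega^*$ of full probability on which the limit holds for every element of $\mathcal{D}$. Theorem \ref{UniformContinuityFunctional} gives equi-uniform-continuity of $\{L^{\varepsilon}\}_{\varepsilon>0}$ on compact subsets of $\{(x,y,t):x\in V_y,\ t>0\}$ uniformly in $\varepsilon$; a standard Ascoli-type density argument then extends the convergence from $\mathcal{D}$ to all such compact sets on $\Omega^*$, and the local uniform continuity of the limit map $(x,y,t)\mapsto t\overline L(\pi_m(-y\circ x)/t)$ is inherited in the passage.

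The main obstacle is the sub-additive ergodic step with moving base point $\delta_{1/\varepsilon}(y)$: Theorem \ref{convzero} is only stated for the fixed base $y=0$. It is resolved by combining (a) ergodicity of $\tau$ with respect to the full Carnot group (as opposed to the one-dimensional subgroup of $\cX$-line translations used in the proof of Theorem \ref{convzero}), which forces the sub-additive ergodic limit to be base-independent and hence $\overline L_y=\overline L$, with (b) Lemma \ref{replacement}, which quantifies the effect of endpoint perturbations via the CC-distance and allows the residual correction terms to be absorbed in the limit.
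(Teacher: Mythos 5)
Your reduction $L^{\varepsilon}(x,y,t,\omega)=L^{\varepsilon}(-y\circ x,0,t,\tau_{\delta_{1/\varepsilon}(y)}\omega)$, and your Stage~2 (countable dense set of rational triples intersected to one full-measure event, then equi-uniform-continuity from Theorem~\ref{UniformContinuityFunctional} to pass to compact sets), both match the paper's argument. The gap is in Stage~1, precisely at the point you flag as ``the main obstacle'': the moving base $\delta_{1/\varepsilon}(y)$.

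Your resolution does not actually close the gap. You build a fixed-base process $\mu_q^z$ and obtain, for each \emph{fixed} $z$, an a.s.\ limit $\lim_{T\to\infty}T^{-1}\mu_q^z([0,T),\omega)=\overline L(q)$; base-independence of the constant limit is then established via the geodesic end-correction and Lemma~\ref{replacement}. But base-independence of the \emph{limit value} is not the same as convergence along a \emph{moving} base. In the representation $L^\varepsilon(x,y,t,\omega)=\varepsilon\,\mu_q^{\delta_{1/\varepsilon}(y)}([0,t/\varepsilon),\omega)$, both $T=t/\varepsilon\to\infty$ and the base $z_\varepsilon=\delta_{1/\varepsilon}(y)$ diverge simultaneously. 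For each fixed $z$ you get a full-measure set $\Omega_z$ on which the subadditive limit holds; these are uncountably many sets, their intersection need not have full measure, and even on that intersection the convergence is only pointwise in $z$, so nothing forces $\varepsilon\,\mu_q^{z_\varepsilon}([0,t/\varepsilon),\omega)\to t\overline L(q)$ along the diagonal. (Compare: $f_n(z)\to c$ for each fixed $z$ does not give $f_n(z_n)\to c$.) This is exactly why the paper does \emph{not} re-run the subadditive ergodic theorem at shifted bases: it instead applies Egoroff's Theorem to the family $\{\varepsilon\mapsto \varepsilon\mu_q([0,t/\varepsilon),\cdot)\}$ to extract a set $A_\delta\subset\Omega$ with $\dP(A_\delta)\geq 1-\delta$ on which the convergence is \emph{uniform in $\omega$}, and then uses the Ergodic Theorem to show that $\tau_{\delta_{1/\varepsilon}(y)}(\omega)\in A_\delta$ with sufficiently high frequency, which combined with the equi-continuity estimates of Section~\ref{SectionEstimates} absorbs the exceptional $\varepsilon$'s (this is the Armstrong--Cardaliaguet--Souganidis argument the paper cites). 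Your proof replaces this step with the assertion that ``base-independence of the a.s.\ limit ensures convergence\ldots even along the moving sequence,'' which is precisely the unjustified implication; without the Egoroff/Ergodic-Theorem mechanism (or an equivalent quantitative substitute) the Stage~1 pointwise claim is not proved.
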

\begin{proof}
We first show that 
\begin{equation}
\label{gabbiani}
L^{\varepsilon}(x,y,t,\omega)= L^{\varepsilon}\left(-y\circ x,0,t,\tau_{\Del(y)}(\omega)\right).
\end{equation}
Note  that 
$x\in V_y\ \text {if and only if } -y\circ x \in V_0$ (recall that $y^{-1}=-y$ in exponential coordinates).
To prove \eqref{gabbiani}, for each $\xi\in \cA^t_{y,x}$ we define $\eta(s):=-y\circ \xi(s)$. By Lemma \ref{leftinv}-(i) we have
$\alpha^{\eta}(s)=\alpha^{\xi}(s)$, $\eta(0)=-y\circ x$, $\eta(t)=0$, hence
\begin{eqnarray*}
L^{\varepsilon}(x,y,t,\omega)&&
=\inf\limits_{\cA^t_{0,-y\circ x}} \int_0^t L\big(\delta_{1/\varepsilon}(y\circ\eta(s)), \alpha^{\eta}(s),\omega\big)ds\\
&&=\inf\limits_{\cA^t_{0,-y\circ x}}  \int_0^t L\big(\delta_{1/\varepsilon}(y)\circ\delta_{1/\varepsilon}(\eta(s)), \alpha^{\eta}(s),\omega\big)ds
\\
&&=\inf\limits_{\cA^t_{0,-y\circ x}}  \int_0^t L\big(\delta_{1/\varepsilon}(\eta(s)), \alpha^{\eta}(s),\tau_{\delta_{1/\varepsilon}(y)}\omega\big)ds\\
&&=L^{\varepsilon}\left(-y\circ x,0,t,\tau_{\delta_{\frac{1}{\varepsilon}}(y)}(\omega)\right),
\end{eqnarray*}
where we have used property {\bf(L4)}.\\
By combining the estimates found in Section \ref{SectionEstimates} with Egoroff's Theorem and the Ergodic Theorem we can conclude.
Since the argument is standard and has been used already in several papers, 
then we recall only some
 steps.\\
Since $L^{\varepsilon}(x,y,t,\omega)$  are equi-uniformly continuous  in $t>0$ and $x,y\in \R^N$ (see Theorem \ref{UniformContinuityFunctional}), using the density of $\Q$ in $\R$ we can restrict our attention only to points of the form $t_z\in (0,+\infty)\cap \Q=\Q^+$, $x_z\in \Q^N$ and $y_z\in \Q^N$. 
We then define the following set:
$$
\Omega_0:=\bigcap_{t_z\in \Q^+,x_z\in \Q^N,\,y_z\in \Q^N}\Omega^{t_z}_{x_z,y_z}.
$$
Note that  $\Omega_0$ does not depend anymore on $t$, $x$ and $y$ and $\dP(\Omega_0)=1$.\\
Using  the structure of Carnot group in exponential coordinates  that implies
$\pi_m(-y\circ x)=\pi_m(x)-\pi_m(y)$, since by \eqref{limiteZero1} $-y\circ x\in \Q^N$,
we know that 
$$
\lim_{\varepsilon\rightarrow 0^+} L^{\varepsilon}(-y_z\circ x_z,0,t_z,\omega)= t_z\,\overline{L}\left(\frac{\pi_m(x_z)-\pi_m(y_z)}{t_z}\right),
\quad \textrm{for all} \;\omega\in\Omega_0.
$$
Applying Egoroff Theorem, we find a ``very big'' subset of $\Omega_0$  where the convergence is uniform in $\omega$. More precisely for any
fixed $\delta>0$, there exists $A_{\delta}\subset \Omega_0$ such that 
$\dP(\Omega_0\setminus A_{\delta})\leq \delta$ (i.e. $\dP( A_{\delta})=1-\delta$) and
$$
\lim_{\varepsilon\rightarrow 0^+} L^{\varepsilon}(-y_z\circ x_z,0,t_z,\omega)= t_z\,\overline{L}\left(\frac{\pi_m(x_z)-\pi_m(y_z)}{t_z}\right),$$
uniformly for all $t_z$, $x_z$, $y_z$ and all $\omega\in A_{\delta}$.\\
To conclude one can use the  Ergodic Theorem to show that with very high probability $\tau_{\Del(y)}(\omega)\in A_{\delta}$. 
The application of the Ergodic Theorem 
 is quite technical, so we refer to Lemma 5.1. in \cite{Armstrong-Souga1-Carda} for the detailed argument. We just like to remark that by Lemma \ref{ReelationDistances} one can easily replace the Euclidean ball with the homogeneous ball (and the reverse), up to consider a different power for the radius  which depends only on the step of the Carnot group.\\
This argument  together with the  estimates in Section  \ref{SectionEstimates} (where we found a uniform modulus of continuity depending only on the assumption on $H$ and on the Carnot group) conclude the proof.
\end{proof}
\begin{corollary}
Under the assumptions of Theorem \ref{Lsegnato},  we have
\begin{equation}
\label{PartialInf}
\lim_{\varepsilon\to 0^+}\inf_{y\in V_x} \left[g(y)+L^{\varepsilon}( x,y,t,\omega)\right]=
\inf_{y\in V_x}  \left[g(y)+t \overline{L}\left(\frac{\pi_m(x)-\pi_m(y)}{t}\right)\right].
\end{equation}
\end{corollary}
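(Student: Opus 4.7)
The plan is to reduce both sides to an infimum over a common compact subset of $V_x$ independent of $\varepsilon$, and then invoke the locally uniform convergence of Theorem~\ref{Lsegnato} together with the continuity of $g$.

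First, I would establish an $\varepsilon$-uniform version of Proposition~\ref{propu4} tailored to the constrained problem. Since $x\in V_x$ (take $q=0$ in Definition~\ref{Vx}) and the constant curve $\xi\equiv x$ is admissible, one has the upper bound
\[
\inf_{y\in V_x}\bigl[g(y)+L^\varepsilon(x,y,t,\omega)\bigr]\le g(x)+C_1t,
\]
while Lemma~\ref{lemmapalle} furnishes the $\varepsilon$-uniform lower bound
\[
g(y)+L^\varepsilon(x,y,t,\omega)\ge -\|g\|_\infty+C_1^{-1}t^{1-\lambda}d_{CC}(x,y)^\lambda-C_1^{-1}t.
\]
Comparing these yields a radius $R=R(\|g\|_\infty,t,C_1,\lambda)$, independent of $\varepsilon$ and $\omega$, such that the LHS of \eqref{PartialInf} coincides with the infimum over the compact set $K:=\{y\in V_x:d_{CC}(x,y)\le R\}$. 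The same coercivity argument applies to the RHS: passing Lemma~\ref{lemmapalle} to the limit and using that the first $m$ components of any horizontal curve evolve with velocity $\alpha^\xi$ (so $|\pi_m(y)|\le d_{CC}(y,0)$), the effective Lagrangian satisfies $\overline L(q)\ge C_1^{-1}|q|^\lambda-C_1^{-1}$; hence the limit functional $y\mapsto g(y)+t\,\overline L(\pi_m(x-y)/t)$ is coercive on $V_x$, and its infimum over $V_x$ agrees with its infimum over $K$ (after possibly enlarging $R$).

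With this in place, $K$ is compact in $\R^N$ (a $d_{CC}$-ball is Euclidean-bounded by Lemma~\ref{ReelationDistances}) and is contained in $V_x$, so Theorem~\ref{Lsegnato} yields
\[
L^\varepsilon(x,y,t,\omega)\longrightarrow t\,\overline L\Bigl(\frac{\pi_m(x)-\pi_m(y)}{t}\Bigr)
\]
uniformly for $y\in K$ and almost surely in $\omega$, where I have used $\pi_m(-y\circ x)=\pi_m(x)-\pi_m(y)$ in exponential coordinates. Adding the continuous function $g$ and passing to the infimum over $K$ is a routine step for uniform convergence on compact sets, delivering exactly \eqref{PartialInf}. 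The only delicate point in this scheme is the $\varepsilon$-uniform compactification of near-minimizers in the first step; thereafter everything follows from Theorem~\ref{Lsegnato} and the continuity of $g$.
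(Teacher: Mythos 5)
Your argument is correct and follows the same overall strategy as the paper's one-line proof (``sum $g(y)$ on both sides of \eqref{UniformLimit} and take the infimum over $y\in V_x$ by using the uniform convergence in $y$''). The difference is that the paper's phrasing elides a real subtlety: Theorem~\ref{Lsegnato} gives only \emph{locally} uniform convergence in $y$, and $V_x$ is an unbounded set, so one cannot directly exchange limit and infimum. Your proposal supplies exactly the missing compactification, using the $\varepsilon$-uniform upper bound from the trivial competitor $y=x$ and the $\varepsilon$-uniform coercive lower bound from Lemma~\ref{lemmapalle} to restrict both sides to a fixed $d_{CC}$-ball $K\cap V_x$ before invoking the locally uniform convergence. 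This mirrors the role of Proposition~\ref{propu4} in the unconstrained setting. One small observation that streamlines the coercivity of the right-hand side: for $y\in V_x$ the point $-x\circ y$ lies in $V_0$ and the $\cX$-line connecting $x$ to $y$ has length $|\pi_m(x)-\pi_m(y)|$, so combined with Lemma~\ref{Iq} one in fact has the identity $d_{CC}(x,y)=|\pi_m(x)-\pi_m(y)|$ on $V_x$; together with Proposition~\ref{prop5} this makes the superlinearity of the limit functional in $d_{CC}(x,y)$ immediate, without the detour through $|\pi_m(y)|\le d_{CC}(y,0)$.
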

\begin{proof} We just sum $g(y)$ on both sides of \eqref{UniformLimit} and take the infimum over $y\in V_x$ by using the uniform convergence in $y$.
\end{proof}
Note that the right-hand side in \eqref{PartialInf} coincides with the Hopf-Lax formula introduced in \cite[Theorem 1.1]{BCP}. Then, whenever the initial condition satisfies the additional assumption $g(x)\geq g(\pi_m(x))$ for all $x\in \R^N$, the right-hand side is the unique viscosity solution of 
the associated Hamilton-Jacobi Cauchy problem (defining $\overline{H}=\overline{L}^*$ and proving  convexity for $\overline{L}$ see Section \ref{HomogSection}) .
Unfortunately in general 
$v_{\varepsilon}(t,x)=\inf_{y\in V_x} \left[g(y)+L^{\varepsilon}( x,y,t,\omega)\right]$ 
 does not solve the $\varepsilon$-problem \eqref{ApproxPr}. 
 Then it is crucial to get rid of the additional constraint $y\in V_x$.
 For this purpose, in Section \ref{SecApproxX-lines} we will introduce a novel approximation argument, by using a suitable construction by $\cX$-lines.\\

We conclude the section investigating some properties for the effective Lagrangian that will be used later.

\begin{lemma}\label{Iq}
For any $y=(y^1,y^2)\in V_0$, we have
$$
\inf\limits_{\xi} \int_0^t |\alpha^{\xi}(s)| ds\geq |y^1|,
$$
where the infimum is taken over all the horizontal curves $\xi(s)$ such that $\xi(0)=(y^1, y^2)$ and $\xi(t)=0$.
\end{lemma}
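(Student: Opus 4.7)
The plan is to exploit the exponential-coordinate structure of the Carnot group, which gives the matrix $\sigma(x)$ the block form $\sigma(x) = (Id_{m\times m} \,|\, A(x))$ from \eqref{matrixC}. This means that for any horizontal curve $\xi(s)$ with horizontal velocity $\alpha^\xi(s)$, the first $m$ components of $\dot\xi(s)$ are exactly $\alpha^\xi(s)$. In other words, the projection $\pi_m$ trivializes the horizontal constraint in the first layer.

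Concretely, I would first observe that
$$\frac{d}{ds}\pi_m(\xi(s)) = \pi_m\bigl(\sigma(\xi(s))\alpha^\xi(s)\bigr) = \alpha^\xi(s),$$
for a.e.\ $s\in(0,t)$, by direct inspection of the first $m$ rows of $\sigma$. Integrating from $0$ to $t$ and using the boundary conditions $\xi(0)=(y^1,y^2)$ and $\xi(t)=0$ yields
$$\int_0^t \alpha^\xi(s)\,ds = \pi_m(\xi(t)) - \pi_m(\xi(0)) = -y^1.$$

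Then the elementary triangle inequality in $\R^m$ gives
$$\int_0^t |\alpha^\xi(s)|\,ds \;\geq\; \left|\int_0^t \alpha^\xi(s)\,ds\right| \;=\; |{-}y^1| \;=\; |y^1|.$$
Since this bound holds for every admissible horizontal curve $\xi$, passing to the infimum yields the claim.

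There is no substantial obstacle here: the only content is the block structure of $\sigma$ in exponential coordinates, which the paper has already recorded. In particular, no use of the constraint $y\in V_0$ (i.e.\ $y^2$ being determined by $y^1$) is needed for the inequality itself — that hypothesis is only what allows the connecting curves to exist in the first place and what makes the bound meaningful (since $|y^1|$ is then the Euclidean length of the projected path connecting $y$ to $0$ in the first layer).
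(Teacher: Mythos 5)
Your proof is correct and takes essentially the same approach as the paper: both use the block structure $\sigma(x)=(Id_{m\times m}\,|\,A(x))$ from \eqref{matrixC} to deduce that the projection $\eta(s)=\pi_m(\xi(s))$ satisfies $\dot\eta(s)=\alpha^\xi(s)$, and then bound the $L^1$-norm of $\alpha^\xi$ from below by the Euclidean distance between the endpoints of $\eta$ (you via the integral triangle inequality, the paper via the elementary fact that the length of a curve in $\R^m$ exceeds the straight-line distance, which amounts to the same thing).
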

\begin{proof} 
Given any horizontal curve $\xi$ such that $\xi(0)=(y^1,y^2)\in \R^N$, $\xi(t)=0\in \R^N$, we define $\eta:[0,t]\rightarrow \R^m$ as $\eta(s):=\pi_m(\xi(s))$.
Then $\eta(0)=y^1\in\R^m$, $\eta(t)=0\in\R^m$.
Moreover, from the structure of $\sigma$ (see \eqref{matrixC}), 
we have
$\dot{\eta}(s)= (\dot{\xi}_1(s), \dots, \dot{\xi}_m(s))=\alpha^{\xi}(s)$.\\
Then, since $\eta$ are curves in the Euclidean $\R^m$ joining $y^1$ to $0$ at time $t$,
$$
\int_0^t |\alpha^{\xi}(s)| ds\geq \inf\limits_{\eta} \int_0^t |\dot{\eta}(s)| ds=|y^1|,
$$ and we can conclude taking the infimum on the left-hand side term.
\end{proof}

\begin{proposition}\label{prop5}
$\overline L(q)$ is continuous and superlinear in $q$, i.e.
\begin{equation}\label{superlinear}
\overline L(q)\geq C_1^{-1}(|q|^{\lambda}-1)
\end{equation}
where $C_1$ and $\lambda$ are the constants introduced in {\bf(L2)}.
\end{proposition}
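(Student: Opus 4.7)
The superlinear bound follows from an estimate at the $\varepsilon$-level that is uniform in $\varepsilon$ and then survives passage to the limit. By the lower bound in \textbf{(L2)} and the definition \eqref{Lepsilon},
$$
L^\varepsilon\big((q,y_q^2),0,1,\omega\big)\ge C_1^{-1}\inf_{\xi\in\cA^1_{0,(q,y_q^2)}}\int_0^1\bigl(|\alpha^\xi(s)|^\lambda-1\bigr)\,ds.
$$
Jensen's inequality applied to the convex map $t\mapsto t^\lambda$ on $[0,\infty)$ (with $\lambda>1$) over the probability measure $ds$ on $[0,1]$ gives
$\int_0^1|\alpha^\xi(s)|^\lambda\,ds\ge\bigl(\int_0^1|\alpha^\xi(s)|\,ds\bigr)^\lambda$, and Lemma \ref{Iq} (which depends only on the endpoints and hence applies equally to curves from $0$ to $(q,y_q^2)$ by time-reversal) yields $\int_0^1|\alpha^\xi(s)|\,ds\ge|q|$, since $\pi_m((q,y_q^2))=q$. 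Chaining these bounds, taking the infimum over $\xi\in\cA^1_{0,(q,y_q^2)}$, and letting $\varepsilon\to 0^+$ in \eqref{EffectiveLagrangian} produces \eqref{superlinear}.

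\smallskip

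For continuity, the plan is to combine the equi-uniform continuity of $L^\varepsilon$ established in Theorem \ref{UniformContinuityFunctional} with the smooth dependence of the endpoint on the parameter. Standard ODE theory applied to the smooth vector fields $X_1,\dots,X_m$ entering \eqref{lines} shows that the map $q\mapsto l^\cX_q(1)=(q,y_q^2)$ is smooth, so for $q_1,q_2$ in any fixed compact $K\subset\R^m$, the two endpoints $(q_1,y_{q_1}^2)$ and $(q_2,y_{q_2}^2)$ are close in the Euclidean sense, with a modulus depending only on $K$. Lemma \ref{ReelationDistances} then yields
$$
\big\|-(q_1,y_{q_1}^2)\circ(q_2,y_{q_2}^2)\big\|_h\le C_K\,|q_1-q_2|^{1/r},
$$
where $r$ is the step of the Carnot group. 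Theorem \ref{UniformContinuityFunctional} therefore gives
$$
\Big|L^\varepsilon\big((q_1,y_{q_1}^2),0,1,\omega\big)-L^\varepsilon\big((q_2,y_{q_2}^2),0,1,\omega\big)\Big|
\le\overline{C}_\delta\,C_K\,|q_1-q_2|^{1/r},
$$
uniformly in $\varepsilon>0$. Passing to the limit $\varepsilon\to 0^+$ (using Theorem \ref{Lsegnato}) transfers this modulus to $\overline L$, establishing continuity.

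\smallskip

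The only real obstacle, which is mild, is the conversion from the Euclidean modulus for $q\mapsto(q,y_q^2)$ to a modulus in the homogeneous distance $\|\cdot\|_h$ required by Theorem \ref{UniformContinuityFunctional}; this is handled on compact sets by Lemma \ref{ReelationDistances}, at the cost of an exponent $1/r$. Everything else is an immediate consequence of the uniform estimates already collected in Section \ref{SectionEstimates}.
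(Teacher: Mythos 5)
Your superlinearity argument is precisely the paper's own: lower bound from \textbf{(L2)}, Jensen's inequality, and Lemma \ref{Iq}, followed by passage to the limit; the only cosmetic difference is that you work directly with $L^\varepsilon\bigl((q,y_q^2),0,1,\omega\bigr)$ as in the definition \eqref{EffectiveLagrangian} and invoke time-reversal to align with Lemma \ref{Iq}, whereas the paper writes it as $L^\varepsilon(0,y,1,\omega)$, which matches Lemma \ref{Iq} directly. For continuity the paper simply invokes the (locally) uniform convergence of $L^\varepsilon$ from Theorem \ref{Lsegnato}; your argument spells out a concrete H\"older-$1/r$ modulus via the equi-uniform continuity of Theorem \ref{UniformContinuityFunctional} together with the smooth dependence $q\mapsto(q,y_q^2)$ and Lemma \ref{ReelationDistances}, which is a slightly more explicit but fundamentally equivalent route.
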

\begin{proof}
The continuity follows from the uniform convergence of $L^{\varepsilon}$ in \eqref{EffectiveLagrangian}.
For each $q\in\R^m$, take $y^1=q$, $y=(q, y^{2})\in V_0$ , $t=1$ and $x=0$.
$$
 L^{\varepsilon}(0,y,1,\omega)=\inf\limits_{\xi\in \cA^t_{y,0}} \int_0^1 L(\delta_{1/\varepsilon}(\xi(s)), \alpha^{\xi}(s),\omega)ds.
$$
From assumption {\bf(L2)}, Jensen's inequality and Lemma \ref{Iq}, we get
\begin{eqnarray*}
L^{\varepsilon}(0,y,1,\omega)&\geq& 
 C_1^{-1}\inf\limits_{\xi\in \cA^1_{y,0}} \left(\int_0^1 |\alpha^{\xi}(s)|^{\lambda} ds\right)-C_1^{-1}\\
& \geq & C_1^{-1} \inf\limits_{\xi\in \cA^1_{y, 0}} \left(\int_0^1 |\alpha^{\xi}(s)| ds\right)^{\lambda}-C_1^{-1}\\
& \geq & C_1^{-1}|q|^{\lambda}-C_1^{-1},
 \end{eqnarray*}
 which implies \eqref{superlinear}, passing to the limit as $\varepsilon\to 0^+$.
\end{proof}

\section{Approximation by $\cX$-lines and convergence of the variational problem.}\label{SecApproxX-lines}

To remove the constraint $y\in V_x$ the idea is to apply Theorem \ref{Lsegnato}  to suitable step-$\cX$-lines, i.e. horizontal curves whose horizontal velocity is step-constant w.r.t. the given vector fields.
More precisely we want to approximate the horizontal velocity $\alpha(t)\in \R^m$ in $L^1$ by step-constant functions.
(Recall that if two horizontal velocities are close in $L^1$-norm then the a associated horizontal curves are close in $L^{\infty}$-norm, see Lemma \ref{aprroxCurveLemma}.) 

 We will treat the liminf and the limsup separately. Both are  treated in the spirit   as one would do for the $\Gamma$-liminf and the $\Gamma$-limsup for integral functionals. One of the technical difficulties  here is how to approximate limits of a sequence of minimizing paths by $\cX$-lines. Due to the fast oscillations of our integrands in $\xi,$ this is not straightforward, we refer to the discussion in \cite{E}. As we cannot assume that our limit paths are smooth but only in some Sobolev space, we have to work with Lebesgue points of the horizontal velocity. Here this is more subtle than in the Euclidean case. 

\begin{lemma}\label{Lebesgueapprox}
Suppose $\dot \xi(s)=\alpha_1(s)X_1(\xi(s))+\ldots+\alpha_m(s)X_m(\xi(s))$ and $t_0\in \R$ is a Lebesgue point for $\alpha_1,\dots,\alpha_m$, that means
\begin{equation}\label{lebesguepointconv}
\lim_{\delta\to 0}\max_{i=1,\ldots,m}\delta^{-1}\int_{t_0-\delta}^{t_0+\delta}|\alpha_i(s)-\alpha_i(t_0)| ds=0.
\end{equation}
Consider the $\cX$-line $\ell(s):=l^{\alpha(t_0)}(s)$ i.e.
$$
\dot \ell(s)=\alpha_1(t_0)X_1(\ell(s))+\ldots+\alpha_m(t_0)X_m(\ell(s)),\quad \ell(t_0)=\xi(t_0).
$$
Then for any $\varepsilon>0$ there exists $\delta_0>0$ such that for all $\delta<\delta_0$
\begin{equation}\label{xlineapprox}
\sup_{[t_0-\delta,t_0+\delta]}d_{CC}(\xi(s),\ell(s))<\varepsilon\left(\delta+\int_{t_0-\delta}^{t_0+\delta}|\alpha(s)|\,d\,s \right).
\end{equation}
\end{lemma}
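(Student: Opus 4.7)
The plan is to reduce to the case $\xi(t_0)=\ell(t_0)=0$ by left-translation (Lemma~\ref{leftinv}(i)) and then estimate $d_{CC}(\xi(s),\ell(s))=\|\eta(s)\|_{CC}$ with $\eta(s):=-\ell(s)\circ\xi(s)$ by passing through the homogeneous norm: on a compact neighbourhood of the origin Lemma~\ref{ReelationDistances} gives $\|\eta(s)\|_{CC}\le c\,\|\eta(s)\|_h$. Thus it suffices to show
$\|\eta(s)\|_h\le \rho^*(\delta)\,\bigl(\delta+\int_{t_0-\delta}^{t_0+\delta}|\alpha(\tau)|\,d\tau\bigr)$
uniformly in $s\in[t_0-\delta,t_0+\delta]$, with $\rho^*(\delta)\to 0$ as $\delta\to 0$.

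I would work in exponential coordinates with the stratification $\eta=(\eta^{(1)},\ldots,\eta^{(r)})$. The group law is additive on the first layer and by Lemma~\ref{matrixC} one has $\dot\xi^{(1)}=\alpha$ and $\dot\ell^{(1)}=\alpha(t_0)$, hence
\[
|\eta^{(1)}(s)|=\Bigl|\int_{t_0}^{s}(\alpha(\tau)-\alpha(t_0))\,d\tau\Bigr|\le \int_{t_0-\delta}^{t_0+\delta}|\alpha(\tau)-\alpha(t_0)|\,d\tau=:\rho(\delta)\,\delta,
\]
where $\rho(\delta)\to 0$ by \eqref{lebesguepointconv}.

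For $k\ge 2$, Lemma~\ref{matrixC} gives $\dot x^{(k)}=P_k(x^{(1)})\,\alpha$ with $P_k$ polynomial of degree $k-1$, while the group law in exponential coordinates is polynomial (Baker--Campbell--Hausdorff). An induction on $k$ using that $\xi$ and $\ell$ share the same initial condition shows that every monomial appearing in $\eta^{(k)}(s)$ carries at least one integrand factor of the form $\alpha(\tau)-\alpha(t_0)$ or $\xi^{(1)}(\tau)-\ell^{(1)}(\tau)$, which supplies a factor $\rho(\delta)$ by the previous step. Combined with the crude bounds $|\xi^{(1)}(\tau)|,\,|\ell^{(1)}(\tau)|\le \delta|\alpha(t_0)|+\rho(\delta)\delta$ on $[t_0-\delta,t_0+\delta]$ and with $\delta|\alpha(t_0)|\le C\int_{t_0-\delta}^{t_0+\delta}|\alpha(\tau)|\,d\tau$ (again a consequence of \eqref{lebesguepointconv} for $\delta$ small), one obtains
\[
|\eta^{(k)}(s)|\le C\,\rho(\delta)\,\Bigl(\delta+\int_{t_0-\delta}^{t_0+\delta}|\alpha(\tau)|\,d\tau\Bigr)^{k},\qquad k=1,\ldots,r.
\]

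By $\delta_\lambda$-homogeneity the homogeneous norm is comparable to $\sum_{k=1}^{r}|y^{(k)}|^{1/k}$, so summing the bounds above yields
\[
\|\eta(s)\|_h\le C\sum_{k=1}^{r}\rho(\delta)^{1/k}\Bigl(\delta+\int_{t_0-\delta}^{t_0+\delta}|\alpha(\tau)|\,d\tau\Bigr)=\rho^*(\delta)\Bigl(\delta+\int_{t_0-\delta}^{t_0+\delta}|\alpha(\tau)|\,d\tau\Bigr),
\]
with $\rho^*(\delta)\to 0$, giving \eqref{xlineapprox} for $\delta$ small enough. The main obstacle is the higher-stratum estimate: individually $|\xi^{(k)}|$ and $|\ell^{(k)}|$ are only of order $(\delta|\alpha(t_0)|)^{k}$, which is \emph{not} small, and the required smallness comes exclusively from the cancellation inside the group-law difference $\eta^{(k)}=(-\ell\circ\xi)^{(k)}$; extracting the Lebesgue-point factor uniformly in $s$ amounts to unwinding the polynomial group law stratum by stratum.
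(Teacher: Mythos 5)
Your proposal takes a genuinely different route from the paper's general-case proof: you try to estimate the group-law difference $\eta=-\ell\circ\xi$ directly via the Baker--Campbell--Hausdorff expansion, stratum by stratum, whereas the paper (Case 2) passes to the 1-variation estimate on the projected paths, applies the iterated-integrals (signature) bound of \cite[Proposition 7.63]{FV}, and then reconstructs the $d_{CC}$-estimate via the Chen--Strichartz formula. Your route is essentially a hands-on version of the same idea, and the paper itself carries out exactly this by-hand computation in its Case 1 (the Heisenberg group), where the stratification has only two layers and the BCH formula is explicit. The comparison $\delta|\alpha(t_0)|\le C\int_{t_0-\delta}^{t_0+\delta}|\alpha|$ that you use to trade an $|\alpha(t_0)|$-factor for an integrable one is precisely the role of the paper's Heisenberg-case inequality $\sqrt{|\alpha_i(0)|}\,t\le\tfrac12|\alpha_i(0)|t+\tfrac12$ together with $|\alpha_i(0)|t\le t|r_i(t)|+\int_0^t|\alpha_i|$; that part of your sketch matches the paper.

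There is, however, a genuine gap at the heart of your argument. The inductive claim that \emph{every monomial appearing in $\eta^{(k)}(s)$ for $k\ge 2$ carries at least one small factor $\alpha(\tau)-\alpha(t_0)$ or $\xi^{(1)}(\tau)-\ell^{(1)}(\tau)$, and that the remaining factors can be grouped so as to produce exactly $(\delta+\int|\alpha|)^{k}$} is asserted but not proved. This is not a formality: it is the entire combinatorial and analytic content of the lemma in a Carnot group of general step $r$. Expanding $-\ell\circ\xi$ via BCH produces iterated commutator terms of order up to $r$, each involving iterated integrals of mixed products of $\alpha_i(\tau)$, $\alpha_i(t_0)$, and lower-layer components of $\xi$ and $\ell$; verifying that the cancellations are total, that each surviving term has precisely one small factor, and that every remaining $|\alpha(t_0)|$-factor can be paired with a $\delta$ so as to be absorbed into $\int|\alpha|$ requires a careful induction on the grading that you do not carry out. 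This is exactly the step the paper outsources to the iterated-integral estimate of Friz--Victoir together with the Chen--Strichartz representation, which encode in closed form that the $k$-th layer of the exponential coordinates of a path is a linear combination of $k$-fold iterated integrals of its horizontal velocity. Without that machinery or an explicit inductive proof, your argument establishes the lemma only for step-$2$ groups where the computation is short (as in the paper's Case 1), not in the generality required.
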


\begin{proof} We use the exponential representation of the Carnot group. Moreover, since the Carnot-Carath\'eodory distance is locally equivalent to the homogeneous distance, we estimate $\|-\ell(s)\circ \xi(s)\|_h$.

{\em Case 1: the Heisenberg group $\dH$.}
We prove in the Heisenberg group $\dH$ by explicit computations. W.l.o.g. we assume $t_0=0$; for the first two coordinates we have
$$
\xi_i(t)=\ell_i(0)+t\alpha_i(0)+t\ \underbrace{\left( 
\frac{1}{t}\int_0^t(\alpha_i(s)-\alpha_i(0))ds\right)}_{:=r_i(t)}=\ell_i(t)+t\,r_i(t).
$$
For the third coordinate we have
\begin{eqnarray*}
\ell_3(t)&=&\frac{t}{2}\left(\ell_1(0)\alpha_2(0)-\ell_2(0)\alpha_1(0)\right)+\ell_3(0)\\
\xi_3(t)&=&\int_0^t\frac{1}{2}\left(\xi_1(s)\alpha_2(s)-\xi_2(s)\alpha_1(s)\right)ds+\ell_3(0).
\end{eqnarray*}
Writing $\alpha_2(s)=\alpha_2(s)\pm \alpha_2(0)$, we get
\begin{eqnarray*}
\int_0^t\xi_1(s)\alpha_2(s)ds
&=&\frac{t^2}{2}\alpha_1(0)\alpha_2(0)+t\ell_1(0)\alpha_2(0)+\ell_1(0)t\,r_2(t)+\\&&+\int_0^t s\, r_1(s)\alpha_2(s)ds+\alpha_1(0)\int_0^ts(\alpha_2(s)-\alpha_2(0))ds,
\end{eqnarray*}
and as $\int_0^t\xi_2(s)\alpha_1(s)ds$ can be treated similarly, we have
\begin{eqnarray*}
\xi_3(t)&=&\ell_3(t)+\frac{t}{2}\left(\ell_1(0)r_2(t)-\ell_2(0)r_1(t)\right)\\
&&+\int_0^ts\big(r_1(s)\alpha_2(s)-r_2(s)\alpha_1(s)\big)ds\\&& +\int_0^ts\left[\alpha_1(0)\big(\alpha_2(s)-\alpha_2(0)\big)-\alpha_2(0)\big(\alpha_1(s)-\alpha_1(0)\big)\right]ds.
\end{eqnarray*}
Let us denote the last two lines by $R(t).$ Now
$$
(-\ell(t)\circ \xi(t))_3=\xi_3(t)-\ell_3(t)+\frac{\ell_2(t)\xi_1(t)-\ell_1(t)\xi_2(t)}{2},
$$
we get
$$
(-\ell\circ \xi)_3=R(t)+t^2\frac{\alpha_2(0)r_1(t)-\alpha_1(0)r_2(t)}{2}.
$$
All error terms can be estimated by $t^2\|\alpha\|_\infty\sup_{[0,t]}\max_{1,2}|r_i|$ but we need an estimate where the constant depends only on $\|\alpha_i\|_{L^1}$.\\
Note that $|R(t)|$ can be estimated by 
\begin{align*}
&\left|\int_0^t s\alpha_1(0)\big(\alpha_2(s)-\alpha_2(0)\big)ds\right|\le
\int_0^t t^2\frac{|\alpha_1(0)||\alpha_2(s)-\alpha_2(0)|}{t}\!\;ds \le|\alpha_1(0)||r_2(t)| t^2\\
&\left|\int_0^tsr_1(s)\big(\alpha_2(s)-\alpha_2(0)+\alpha_2(0)\big)ds\right|
\le |\alpha_2(0)|\sup_{[0,t]}|r_1|t^2+|r_2(t)|\sup_{[0,t]}|r_1|t^2,
\end{align*}
and similarly for the remaining terms.
Denoting by $r(t)$ a term vanishing with $|r_1(t)|+|r_2(t)|$, we have to show that terms of the form $\sqrt{\alpha_i(0)}\;t\,r(t)$ can be estimated in a way which can be summed over a partition of the unit interval. Since for $t\in[0,1]$
$$
\sqrt{|\alpha_i(0)|}\;t\le \frac{1}{2}|\alpha_i(0)|t+
\frac{1}{2},
$$
and
$$
|\alpha_i(0)|t\le \int_0^t|\alpha_i(s)-\alpha_i(0)|ds
+\int_0^t|\alpha_i(s)|ds =t|r_i(t)|+\int_0^t|\alpha_i(s)|ds,
$$
and the claim is shown by applying these estimates on both sub-intervals $(t_0-\delta,t_0)$ and $(t_0,t_0+\delta)$.

{\em Case 2: general case.}

{\em Step $1$.} As the left-translation leaves the CC-distance between two points invariant, we may assume w.l.o.g. $t_0=0$ and $\xi(0)=\ell(0)=0$.\\
For a general path $\eta:[0,T]\to \R^m$ we define the 1-variation norm in the following way: let $\Delta[0,T]$ be the family of all partitions of the interval $[0,T].$ Then 
$$
|\eta|_{1-var[0,T]}=\sup_{\Delta[0,T]}\sum_{(t_k,t_{k+1})\in \Delta[0,T]}|\eta(t_{k+1})-\eta(t_{k})|.
$$
It is easy to see that 
\begin{equation}\label{bus}
|\eta|_{1-var[0,T]}=\sup_{\Delta[0,T]}\sum\left|\int_{t_k}^{t_{k+1}}\dot \eta(s) ds\right|\le\int_0^t|\dot \eta(s)|ds.
\end{equation}
We define two paths in ${\mathbb R}^m$ as the projection on the first $m$ components of $\xi$ and $\ell$ and we denote them respectively by $\eta^\xi$ and $\eta^\ell$. Using the structure given by \eqref{matrixC}, we have that $\dot \eta^\xi(t)=\alpha(t)$ and $\dot \eta^\ell(t)=\alpha(0)$, then, for $\delta$ sufficiently small, \eqref{bus} implies
$$
|\eta^\xi-\eta^\ell|_{1-var[0,\delta]}\le \delta \varepsilon
$$
because by assumption $t_0=0$ is a Lebesgue point for $\alpha$.\\
{\em Step $2$.} By \cite[Proposition 7.63]{FV}, 
we have for the signature of the path (i.e. for the difference of all iterated integrals) 
\begin{align*}
&\sup_{k=1,\ldots n-m}\sup_{0<t_1<\delta}\left|\int_0^{t_1}\int_0^{t_2}\ldots\int_0^{t_k}d\eta^\xi(s_1)\otimes \ldots\otimes d\eta^\xi(s_k)ds_1 \ldots ds_k\right.\\& \left.-\int_0^{t_1}\int_0^{t_2}\ldots\int_0^{t_k}d\eta^\xi(s_1)\otimes \ldots \otimes d\eta^\xi(s_k)ds_1 \ldots ds_k\right|\delta^{-k}<C\varepsilon.
\end{align*}
 {\em Step $3$.} The Chen-Strichartz formula, which is a deep generalization of the Baker-Campbell-Hausdorff formula, allows to compute the solution of flows driven by absolutely continuous paths via multiplying the terms appearing in the signature to the corresponding commutators of the vector fields. Adapting the notation in \cite{Baudoin}, Ch. 2 to the notation used here we have for a path $\xi$ as in \eqref{EQ_Horizontal} starting from the origin (in exponential coordinates)
$$
\xi =\sum_{k=1}^r
\sum_{
I=\{i_1,...,i_k\}}
\Lambda_I(\alpha_1,\ldots,\alpha_m)X_I.
$$
Here 
$$
X_I:=[X_{i_1} , [X_{i_2} , ..., [X_{i_{k−1}} , X_{i_k} ]...].
$$
Moreover for $t_1<\ldots<t_k<\delta$
$$
\Lambda_I:=\sum_{\sigma\in S_k}
\frac{(-1)^{e(\sigma)}}{k^2
\left(\begin{array}{c}k-1\\ e(\sigma)\end{array}\right)}
\int_0^{t_1}\int_0^{t_2}\ldots\int_0^{\delta}\alpha_{i_1}(t_{i_1})
\ldots \alpha(t_{i_k})dt_{i_1} \ldots dt_{i_k},
$$ where $S_k$ is the symmetric group of $k$ elements and $e(\sigma)$ is a nonnegative integer depending only on the permutation $\sigma\in S_k$ (see \cite{Baudoin}, Ch.1 )

Note that all sums are finite as the Lie algebra is nilpotent, and that the projection of the solution onthe $k$-th layerof the graded algebra  is a multiple of a $k$-times iterated integral of the $\alpha_i,$ $i=1,\ldots,m.$ 

Combining step 2 and step 3, the desired estimate in the homogeneous (and hence Carnot-Caratheodory) distance follows.

\end{proof}


In the following lemma we build a partition using sub-intervals where we can apply the previous lemma up to a set of Lebesgue-measure arbitrarily small.

\begin{lemma}\label{covering} 
For any $\rho>0$ and $\delta>0$ there exists  natural numbers $N_1$ and $N_2,$  and a partition of $[0,1)$ formed by the union of the intervals $I_k=[t_k-\ell_k,t_k+\ell_k)$ for $k=1,\dots, {N_1}$ and the intervals $J_k=[t_k'-\ell_k',t_k'+\ell_k')$ for $k=1,\dots, {N_2}$ such that
\begin{itemize}
\item  $0<\ell_k<\rho$ for $k=1,\ldots, N_1$,
\item for $k=1,\dots,N_1$ and $i=1,\ldots,m$ we have 
$$\int_{t_k-r}^{t_k+r}|\alpha_i(s)-\alpha_i(0)|ds<r\delta,\quad \textrm{for }0<r<\ell_k,
$$
\item $\sum_{k=1}^{N_2}|J_k|<\delta.$
\end{itemize}
\end{lemma}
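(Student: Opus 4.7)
The plan is to build the partition using Lebesgue differentiation together with a Vitali-type covering argument. Since any admissible horizontal velocity $\alpha$ belongs to $L^\lambda(0,1)\subset L^1(0,1)$ (see Corollary \ref{corollary71}), Lebesgue's differentiation theorem provides a set $S\subset(0,1)$ with $|S|=1$ of simultaneous Lebesgue points of $\alpha_1,\dots,\alpha_m$. I interpret the right-hand side of the second bullet in the usual Lebesgue-point form, namely $\alpha_i(t_k)$ in place of $\alpha_i(0)$, which is how the condition is actually used in Lemma \ref{Lebesgueapprox}.

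For each $t\in S$, pick $\bar\ell(t)\in(0,\min(\rho,t,1-t))$ such that
\[
\int_{t-r}^{t+r}|\alpha_i(s)-\alpha_i(t)|\,ds<r\delta,\qquad 0<r<\bar\ell(t),\ i=1,\dots,m.
\]
The family $\mathcal{F}:=\{[t-r,t+r]:t\in S,\ 0<r<\bar\ell(t)\}$ is a Vitali cover of $S$, since intervals of arbitrarily small diameter about every point of $S$ are available. The Vitali covering lemma then yields a countable pairwise disjoint subfamily $\{[t_k-\ell_k,t_k+\ell_k]\}_{k\in\N}$, contained in $(0,1)$, with $|S\setminus\bigcup_k[t_k-\ell_k,t_k+\ell_k]|=0$. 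Since $|S|=1$ and the chosen intervals are disjoint, $\sum_{k=1}^{\infty}2\ell_k=1$, and $0<\ell_k<\bar\ell(t_k)<\rho$ by construction.

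Finally, choose $N_1\in\N$ so large that $\sum_{k>N_1}2\ell_k<\delta$ and set $I_k:=[t_k-\ell_k,t_k+\ell_k)$ for $k=1,\dots,N_1$ (the endpoints have measure zero and do not affect the integral bound). The first two bullets then hold by construction. Removing these $N_1$ disjoint half-open intervals from $[0,1)$ leaves a disjoint union of at most $N_1+1$ half-open intervals, which one parametrises symmetrically as $J_k=[t'_k-\ell'_k,t'_k+\ell'_k)$ for $k=1,\dots,N_2$. Then
\[
\sum_{k=1}^{N_2}|J_k|=1-\sum_{k=1}^{N_1}2\ell_k=\sum_{k>N_1}2\ell_k<\delta,
\]
which finishes the construction. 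The substantive step is the Vitali extraction; the rest is a truncation/bookkeeping argument standard in measure theory, so I do not anticipate any real obstacle beyond being careful with half-open vs.\ closed endpoints when passing from the Vitali intervals to the half-open $I_k$'s and $J_k$'s required in the statement.
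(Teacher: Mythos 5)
Your proof is correct, and it takes a genuinely different route from the paper's. The paper's argument runs through compactness: it covers the null set $\mathcal N$ of non-Lebesgue points by countably many intervals of total length $<\delta$, adds a small interval around every Lebesgue point, extracts a finite subcover of $[0,1]$, and then sketches a procedure of ``shortening'' the resulting finite collection of (overlapping) intervals, starting from the leftmost, to obtain the partition. That shortening step is stated rather loosely --- one must check that after shortening, the $I_k$'s are still centered at Lebesgue points (so that the $\alpha_i(t_k)$-type bound remains available for all $0<r<\ell_k$), and the paper does not spell this out. Your Vitali-covering argument avoids the issue entirely: since the disjoint intervals produced by the Vitali theorem are already drawn from the original centered family, each $I_k$ remains centered at a Lebesgue point $t_k$, disjointness comes for free, and the mass-balance $\sum_{k\le N_1}|I_k| + \sum_k|J_k| = 1$ is immediate because the Vitali intervals exhaust $(0,1)$ up to measure zero. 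In short, you buy rigour and transparency at the price of invoking a slightly heavier covering theorem, whereas the paper gets away with bare compactness at the price of a hand-waved shortening step. You are also right that $\alpha_i(0)$ in the displayed inequality is a typo for $\alpha_i(t_k)$ (resp.\ $\alpha_i(\tau)$ in the paper's proof); this is how the condition is used in Lemma~\ref{Lebesgueapprox}.
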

\begin{proof} By the Lebesgue point theorem, there exits a set ${\mathcal N}$ of zero Lebesgue measure such that any $\tau\in [0,1]\setminus{\mathcal N}$ is a joint Lebesgue point of $\alpha_1,\ldots,\alpha_m$. By definition of the Lebesgue measure, ${\mathcal N}$ can be covered by a countable union of intervals with total length smaller than $\delta$. For each $\tau\in [0,1]\setminus{\mathcal N}$ there exists a $\rho_\tau>0$ such that 
 $$
\int_{\tau-r}^{\tau+r}|\alpha_i(s)-\alpha_i(0)|ds<r\delta,\quad \textrm{for }i=1,\ldots,m \textrm{ and } 0<r<\rho_\tau.
$$
 
In this way we obtain an open cover of the compact unit intervals  and extract a finite subcover. These finitely many intervals can be ordered according to their center and made into a partition by shortening them, starting from the leftmost center, until the desired partition is obtained. 
\end{proof}

\begin{figure} [htbp]
\begin{center}
\includegraphics[scale=0.5]{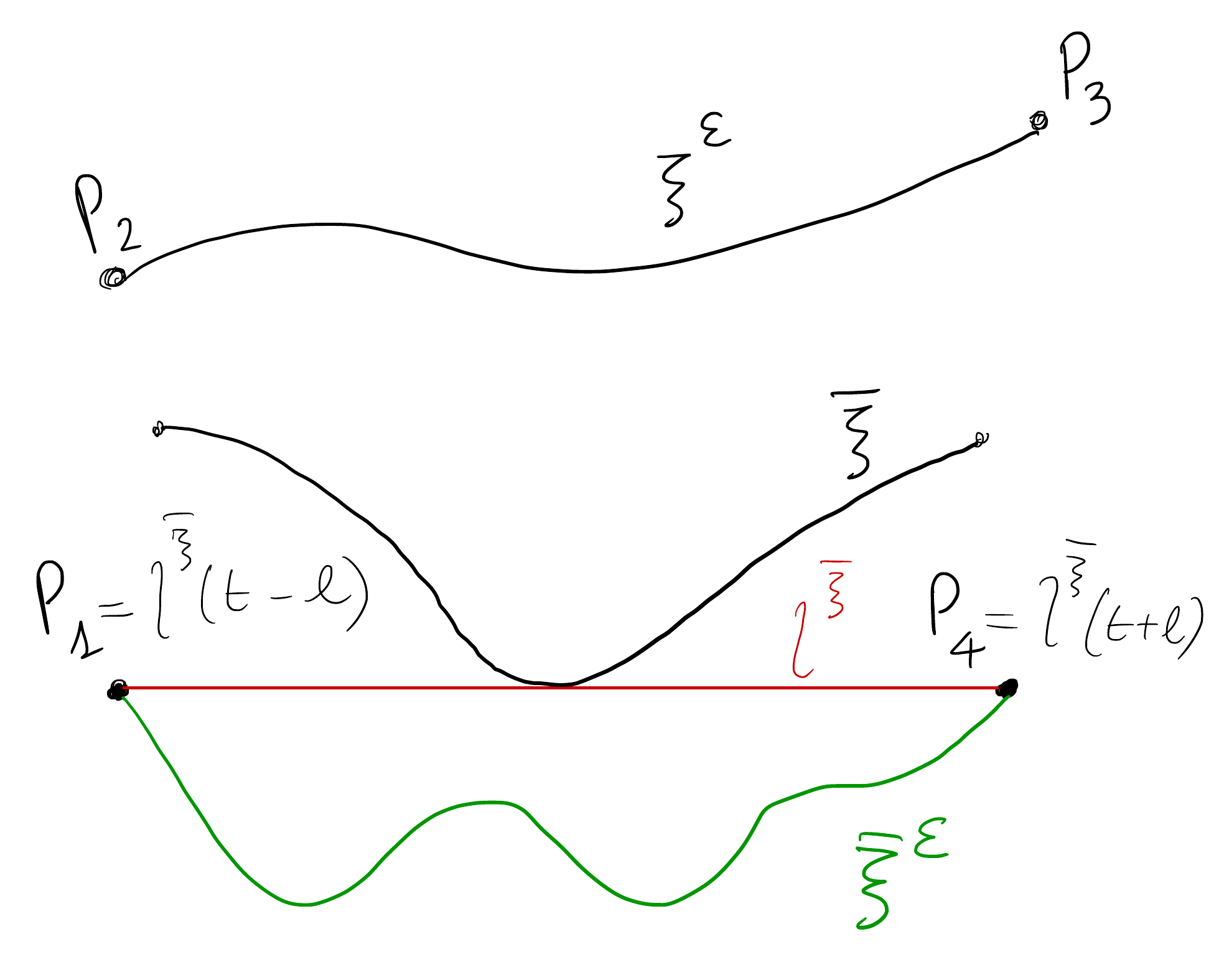}
\end{center}  
\caption{this picture illustrates Steps 3 and 4 of Lemma \ref{LemmaLiminfnuovo}.}
  \label{Figura-C}
\end{figure}


In the following lemma we prove the main lower bound for the liminf of $L^\varepsilon$.

 \begin{lemma}
\label{LemmaLiminfnuovo}
Let us assume that $L(x,q,\omega)$ satisfies assumptions {\bf(L1)-(L4)} and {\bf (L6)}.
Then, locally uniformly in $t>0$, $x,y\in \R^N$ and a.s. in $\omega\in \Omega$
\begin{equation}
\label{LIMINFN}
\liminf_{\varepsilon\to 0^+}L^{\varepsilon}(x,y,t,\omega)
\geq t\inf_{\alpha\in \cF^t_{y,x} } \int_{0}^{t} \overline{L}(\alpha(s)) ds ,
\end{equation}
  where $\cF^t_{y,x}$ is the set of all the $m$-valued measurable functions $\alpha:[0,t]\to \R^m$ such that the corresponding horizontal curve $\xi^{\alpha}(s)$ joins $y$ to $x$ in a time $t$, and $\overline L(q)$ is the effective Lagrangian defined by limit \eqref{EffectiveLagrangian}.
\end{lemma}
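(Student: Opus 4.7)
The plan is to take a minimizing sequence for $L^\varepsilon(x,y,t,\omega)$, extract a limit horizontal curve, approximate it on short sub-intervals by $\cX$-lines via Lemmas~\ref{Lebesgueapprox} and~\ref{covering}, and apply the constrained convergence of Theorem~\ref{Lsegnato} piece by piece. First, I would fix $\omega$ in a suitable full-measure set and select a minimizing sequence $\xi^\varepsilon\in\cA^t_{y,x}$ for $L^\varepsilon(x,y,t,\omega)$, with horizontal velocity $\alpha^\varepsilon$. By Corollary~\ref{corollary71}, $\xi^\varepsilon$ is uniformly bounded in $L^\infty$ and $\alpha^\varepsilon$ is equi-bounded in $L^\lambda(0,t)$. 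Passing to a subsequence, $\alpha^\varepsilon\rightharpoonup\alpha$ weakly in $L^\lambda$ and, by Lemma~\ref{aprroxCurveLemma}, $\xi^\varepsilon\to\xi^\alpha$ uniformly on $[0,t]$, with $\alpha\in\cF^t_{y,x}$.

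Next, for each $\delta\in(0,1)$, Lemma~\ref{covering} gives a partition of $[0,t]$ into good intervals $I_k=[t_k-\ell_k,t_k+\ell_k]$, $k=1,\dots,N_1$, whose centers are joint Lebesgue points of the components of $\alpha$, and bad intervals $J_k$ of total length less than $\delta t$. On each good interval I would introduce the $\cX$-line $\ell^k(s)$ with constant horizontal velocity $\alpha(t_k)$ passing through $\xi^\alpha(t_k)$ at time $t_k$. Lemma~\ref{Lebesgueapprox} then yields $d_{CC}(\xi^\alpha(s),\ell^k(s))<\eta(\ell_k+\int_{I_k}|\alpha|\,ds)$ on $I_k$ with $\eta\to 0$ as the mesh is refined; combined with the uniform convergence $\xi^\varepsilon\to\xi^\alpha$, for all $\varepsilon$ small the endpoints of $\xi^\varepsilon|_{I_k}$ lie within CC-distance $o(\ell_k)$ of the matching endpoints of $\ell^k$.

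The core estimate then proceeds as follows. By subadditivity (Lemma~\ref{NDrem}(3)), the portion of the integral $\int L(\delta_{1/\varepsilon}(\xi^\varepsilon),\alpha^\varepsilon,\omega)\,ds$ over $I_k$ is bounded below by $L^\varepsilon(\xi^\varepsilon(t_k-\ell_k),\xi^\varepsilon(t_k+\ell_k),2\ell_k,\omega)$. Using Lemma~\ref{replacement} to swap these endpoints with those of $\ell^k$ at a multiplicative error vanishing in $\varepsilon$ and $\delta$, and invoking Theorem~\ref{Lsegnato} (whose hypotheses are met because the endpoints of $\ell^k$ are in $V$-relation by construction), I obtain
\begin{equation*}
\liminf_{\varepsilon\to 0^+}\int_{I_k}L\big(\delta_{1/\varepsilon}(\xi^\varepsilon),\alpha^\varepsilon,\omega\big)\,ds\;\geq\;2\ell_k\,\overline L(\alpha(t_k))-o_\delta(\ell_k),
\end{equation*}
a.s.\ in $\omega$ on a full-measure set depending only on the (finite) partition. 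On each bad interval $J_k$, assumption {\bf(L2)} gives a contribution at least $-C_1|J_k|$, hence a total bad contribution $\geq-C_1\delta t$. Adding over all intervals, recognizing a Riemann sum for $\int_0^t\overline L(\alpha(s))\,ds$ via the continuity of $\overline L$ (Proposition~\ref{prop5}), letting $\delta\to 0^+$ along a countable sequence, and finally taking the infimum over $\alpha\in\cF^t_{y,x}$, yields \eqref{LIMINFN}. Local uniformity in $(t,x,y)$ is inherited from the equi-continuity of $L^\varepsilon$ established in Theorem~\ref{UniformContinuityFunctional}.

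The main obstacle will be the quantitative replacement step: Lemma~\ref{replacement} requires the endpoint CC-distances to be $\ll 2\ell_k$ and produces an error proportional to $L^\varepsilon$ on the interval, which in turn grows with the local $L^\lambda$-norm of $\alpha^\varepsilon$. Making the error genuinely $o_\delta(\ell_k)$ therefore forces the partition centers to be Lebesgue points also of $|\alpha|$ (so that $\int_{I_k}|\alpha|=O(\ell_k)$ and the bound from Lemma~\ref{Lebesgueapprox} is of strictly lower order than $\ell_k$), and demands a diagonal argument coupling the $\varepsilon\to 0$ convergence of Theorem~\ref{Lsegnato} on each piece with the uniform $L^\lambda$-control of $\alpha^\varepsilon$ on $I_k$; this interplay between the Lebesgue-point approximation and the Carnot-group-specific estimates of Section~\ref{SectionEstimates} is where the real work lies.
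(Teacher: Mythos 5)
Your proposal takes essentially the same approach as the paper: the same compactness to extract a limit horizontal curve, the same partition into good/bad intervals via Lemma~\ref{covering}, the same $\cX$-line approximation on good intervals via Lemma~\ref{Lebesgueapprox}, the same endpoint replacement via Lemma~\ref{replacement}, and the same piecewise application of Theorem~\ref{Lsegnato} followed by a Riemann-sum argument. One small correction: Lemma~\ref{aprroxCurveLemma} cannot be invoked to pass from weak $L^\lambda$ convergence of the velocities $\alpha^\varepsilon$ to uniform convergence of the paths $\xi^\varepsilon$, since that lemma requires an $L^1$-bound on the \emph{difference} $\alpha^\varepsilon-\alpha$, which weak convergence does not give; the paper instead obtains uniform convergence of a subsequence from the equi-bound in $W^{1,\lambda}$ (Sobolev embedding plus Ascoli--Arzel\`a), and only then uses the weak $L^\lambda$ convergence of $\alpha^\varepsilon$ to identify the limit curve as horizontal.
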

\begin{proof}

{\em Step 1:} For sake of simplicity we assume that $t=1$ and that for each $\varepsilon$ there exists a minimizing curve $\xi^\varepsilon$ for $L^\varepsilon(x,y,1,\omega)$. 
We observe that, by Corollary~\ref{corollary71}, the sequence $\{\xi^\varepsilon\}_\varepsilon$ is equibounded in $\mathbb L^\infty(0,1)$ and in $W^{1,\lambda}(0,1)$.
In particular, it is equi-bounded in the H\"older norm with  H\"older exponent $\gamma <1-\frac{1}{\lambda}$. Hence, by Ascoli theorem and by Sobolev embedding theorem, up to a subsequence, $\xi^\varepsilon$ uniformly converges to some H\"older curve~$\bar \xi$.
We claim that $\bar \xi$ is horizontal. Actually, by Proposition~\ref{prp71}, there holds $\|\alpha^{\xi^\e}\|_{L^\lambda}<C_2$; hence, possibly passing to a subsequence, $\{\alpha^{\xi^\e}\}$ weakly converges to some $\bar \alpha$ in $L^\lambda$. Moreover, there holds
\[
\xi^\e(t)=\sum_{i=1}^m\int_0^t \alpha^{\xi^\e}_i(s) X_i(\xi^\e(s))\, ds;
\]
so, taking into account that $\xi^\e$ uniformly converge to $\bar \xi$, we infer
\[
\bar\xi(t)=\sum_{i=1}^m\int_0^t \alpha^{\bar\xi}_i(s) X_i(\bar \xi(s))\, ds
\]
which means that $\bar \xi$ is horizontal. Finally, smoothing the horizontal velocity $\alpha^{\bar \xi}$, we obtain a family of smooth and horizontal curves uniformly approximating $\bar \xi$. Therefore, since now on, w.l.o.g. we assume that $\bar \xi$ is admissible. 

{\em Step 2:} Choose a partition of $[0,1]$ in intervals $I_k$ and $J_k$ as in Lemma \ref{covering} and error $\delta$. Denote by 
$$
{\mathcal B}:=\bigcup_{k=1}^{N_2}J_k
$$
the bad set of total length $\delta.$
By the a-priori bounds on $\|\alpha\|_{L^1}$ (see Corollary~\ref{corollary71}), assumption~{\bf (L2)} and the continuity of $\bar L$ (see Proposition~\ref{prop5}), there exists $r(\delta)\to 0$ as $\delta\to 0$ such that
\begin{eqnarray}\notag
L^{\varepsilon}(x,y,t,\omega)&=&\int_0^1 L\left(\Del(\xi^{\varepsilon}(s)),\alpha^{\xi^{\varepsilon}}(s),\omega\right) ds\\ 
\label{pizza}
&\ge &\int_{[0,1]\setminus {\mathcal B}} L\left(\Del(\xi^{\varepsilon}(s)),\alpha^{\xi^{\varepsilon}}(s),\omega\right) ds-r(\delta),
\\
\notag
\int_{0}^{1} \overline{L}(\alpha^{\overline\xi}(s)) ds&\ge& 
\int_{[0,1]\setminus {\mathcal B}}\overline{L}(\alpha^{\overline\xi}(s)) ds-r(\delta).
\end{eqnarray}
Hence we can ignore the bad intervals.\\
As the number of good intervals $N_1$ is fixed and finite and $\xi^\varepsilon$ uniformly converges to $\bar \xi$,  we can choose $\varepsilon$ sufficiently small such that 
\begin{equation}\label{convunifdCC}
\max_{k=1,\ldots N_1}\sup_{I_k}|I_k|^{-1}d_{CC}(\xi^{\varepsilon}(s), \overline\xi(s))<\delta.
\end{equation}



In the interval $I_k$, consider the constant velocity $\alpha^{\overline\xi}(t_k)$.
By the continuity of $\overline L$, the integral on the right hand side of (\ref{LIMINFN}) can be approximated by any Riemann sum, and the bad intervals can be ignored:
\begin{equation}
\label{pizza3}
\sum_{k=1}^{N_1}|I_k|\overline{L}\left(
\alpha^{\overline\xi}(t_k)
\right)\to \int_0^1\overline{L}(\alpha^{\overline\xi}(s)) ds, \quad \textrm{as}\; N_1\to +\infty. 
\end{equation}

{\em Step 3:}
Let us consider now one "good" interval, w.l.o.g. denoted by $I=[t-\ell,t+\ell],$ and consider the $\cX$-line through $\overline\xi(t)$ with velocity $\alpha^{\overline\xi}(t)$ which we denote by $l^{\overline\xi}$ (see Figure \ref{Figura-C}).


%

Let us consider a curve $\overline\xi^{\varepsilon}$ with
$\overline\xi^{\varepsilon}(t-\ell)=l^{\overline\xi}(t-\ell)$ and $\overline\xi^{\varepsilon}(t+\ell)=l^{\overline\xi}(t+\ell)$ which is the minimizer of
$$
L^\varepsilon\left(l^{\overline\xi}(t+\ell), l^{\overline\xi}(t-\ell),I,\omega\right)=\int_{t-\ell}^{t+\ell}   
L\left(\Del(\overline \xi^{\varepsilon}(s)),\alpha^{\overline \xi^{\varepsilon}}(s),\omega\right) ds.
$$
where $L^\varepsilon\left(x,y,I,\omega\right)$ is defined as in \eqref{Lepsilon} with the infimum is over the admissible curves with $\xi(t-\ell)=y$ and $\xi(t+\ell)=x$.\\ 
From Theorem~\ref{Lsegnato}, we can choose $\varepsilon$ sufficiently small such that
\begin{eqnarray}
\label{pizza2}
\int_{t-\ell}^{t+\ell} L\left(\Del(\overline\xi^{\varepsilon}(s)),\alpha^{\overline\xi^{\varepsilon}}(s),\omega\right) ds&\ge& |I| \overline{L}(\alpha^{l^{\overline\xi}})-|I|\delta,
\end{eqnarray}
This can be done uniformly for all good intervals $I_k$ as their number is already fixed.

{\em Step 4: } We now claim that 
\begin{multline}\label{claim2}
\int_{I} L\left(\Del(\xi^{\varepsilon}(s)),\alpha^{\xi^{\varepsilon}}(s),\omega\right) ds \geq\\
\int_{I} L\left(\Del(\overline\xi^{\varepsilon}(s)),\alpha^{\overline\xi^{\varepsilon}}(s),\omega\right) ds-C\left(|I|+\|\alpha^{\overline\xi}\|_{L^1(I)}\right)\delta.
\end{multline}

Let us now prove the claim \eqref{claim2}: by Lemma \ref{Lebesgueapprox} we know that
\begin{equation}\label{distanzacurve}
\sup_{I}d_{CC}(l^{\overline\xi}, \overline\xi)<C\left(|I|+\|\alpha^{\overline\xi}\|_{L^1(I)}\right)\delta.
\end{equation}
Consider the points 
$$P_1:=\overline\xi^\varepsilon(t-\ell)=l^{\overline{\xi}}(t-\ell),\quad
P_2:= \xi^{\varepsilon}(t-\ell), \quad
P_3:=\xi^{\varepsilon}(t+\ell)\quad
P_4:=\overline \xi^{\varepsilon}(t+\ell)=l^{\overline{\xi}}(t+\ell)
$$
(see Figure \ref{Figura-C}).
Then 
by \eqref{convunifdCC}, 
and \eqref{distanzacurve} 
$$
d_{CC}(P_1,P_2)\le d_{CC}(l^{\overline \xi}(t-\ell),\overline\xi(t-\ell))+d_{CC}(\overline{\xi}(t-\ell),\xi^\varepsilon(t-\ell))\le C\delta\left(
|I|+\|\alpha^{\overline\xi}\|_{L^1(I)}
\right),
$$
and analogously for $
d_{CC}(P_3,P_4)$.  By Lemmas \ref{replacement}  and \ref{lemma71} we have
$$
L^\varepsilon(P_3,P_2,I,\omega)\ge L^\varepsilon(P_4,P_1,I,\omega)-\delta C\,
\left(|I|+\|\alpha^{\overline\xi}\|_{L^1(I)}\right).
$$
Since $\xi^{\varepsilon}$ is admissible for
$
L^\varepsilon(P_3,P_2,I,\omega)
$
then
$$
L^\varepsilon(P_3,P_2,I,\omega)\le\int_{I} L\left(\Del(\xi^{\varepsilon}(s)),\alpha^{\xi^{\varepsilon}}(s),\omega\right) ds.
$$
 Combining the last two inequalities, \eqref{claim2} is shown.

{\em Step 5:}\\
Since the claim \eqref{claim2} is true for any of the good intervals $I_k$, we can easily conclude. Indeed, using respectively  that $\xi^{\varepsilon}$ are minimizer of $  L^\varepsilon(x,y,1,\omega)$, \eqref{pizza}, Definition~\ref{CC-distance_definiton} and \eqref{pizza2}
\begin{align*}
  L^\varepsilon(x,y,1,\omega)&=
  \int_0^1 L\left(\Del(\xi^{\varepsilon}(s)),\alpha^{\xi^{\varepsilon}}(s),\omega\right) ds
  \\
  &\ge \sum_{k=1}^{N_1}
\int_{I_k} L\left(\Del(\xi^{\varepsilon}(s)),\alpha^{\xi^{\varepsilon}}(s),\omega\right) ds - r(\delta)
\\ 
&\geq
\sum_{k=1}^{N_1}\int_{I_k}\!\!\!\! L\left(\Del(\overline\xi_k^{\varepsilon}(s)),\alpha^{\overline\xi_k^{\varepsilon}}(s),\omega\right) ds -\!\!\!\sum_{k=1}^{N_1}C\delta\left(|I_k|+\|\alpha^{\overline\xi}\|_{L^1(I_k)}\right)\!\!- r(\delta)\\
&\ge
\sum_{k=1}^{N_1}\int_{I_k} L\left(\Del(\overline\xi_k^{\varepsilon}(s)),\alpha^{\overline\xi_k^{\varepsilon}}(s),\omega\right) ds -C\delta(1+d_{CC}(x,y)) - r(\delta)\\
&\ge
\sum_{k=1}^{N_1}|I_k|\overline{L}\left(\alpha^{l^{\xi}}(t_k)\right)-C\delta(1+d_{CC}(x,y))-r(\delta)
\\
&\ge 
\int_0^1\overline{L}(\alpha^{\overline\xi}(s)) ds -r(\delta) -C\delta(1+d_{CC}(x,y))\\
&\longrightarrow \int_0^1\overline{L}(\alpha^{\overline\xi}(s)) ds,\quad \textrm{ as}\; \delta\to 0.
\end{align*}


\nada{

Consider the path $\psi^{\varepsilon}(s)$ which connects the points $A$ with $D$ in the following way: it goes from $A$ to $B$ following the geodesic $\psi_1^{\varepsilon}(s)$ and spending time $[\frac{r}{n},\frac{r}{n}+ \tau_1]$, then it goes from $B$ to $C$ following the curve $\psi_2^{\varepsilon}(s)=\xi^{\varepsilon}\left(\frac{r}{n}+\frac{s-\tau_1}{1-n(\tau_1+\tau_2)}\right)$ (namely it follows the curve $\xi^{\varepsilon}$ spending only time $[\frac{r}{n}+\tau_1, \frac{r+1}{n} -\tau_2]$) and finally it goes from $C$ to $D$ following the geodesic $\psi_3^{\varepsilon}(s)$ in time $[\frac{r+1}{n} -\tau_2, \frac{r+1}{n}]$.


Since the curve $\xi^{\varepsilon,r}(s)$ is a minimizer of the functional in $[\frac{r}{n}, \frac{r+1}{n}]$, we have
\begin{multline}
\label{spezzino}
\int_{\frac{r}{n}}^{\frac{r+1}{n}} L\left(\Del(\xi^{\varepsilon,r}(s)),\alpha^{\xi^{\varepsilon,r}}(s),\omega\right) ds\leq
\int_{\frac{r}{n}}^{\frac{r}{n}+\tau_1} L\left(\Del(\psi_1^{\varepsilon}(s)),\alpha^{\psi_1^{\varepsilon}}(s),\omega\right) ds \\+ \int_{\frac{r}{n}+\tau_1}^{\frac{r+1}{n}-\tau_2} L\left(\Del(\psi_2^{\varepsilon}(s)),\alpha^{\psi_2^{\varepsilon}}(s),\omega\right) ds +
\int_{\frac{r+1}{n}-\tau_2}^{\frac{r+1}{n}} L\left(\Del(\psi_3^{\varepsilon}(s)),\alpha^{\psi_3^{\varepsilon}}(s),\omega\right).
\end{multline}

Following the argument used to prove Lemma~\ref{lemma71}, we obtain
\begin{equation}\label{7.12}
  \int_{\frac{r}{n}}^{\frac{r}{n}+\tau_1} L\left(\Del(\psi_1^{\varepsilon}(s)),\alpha^{\psi_1^{\varepsilon}}(s),\omega\right) ds\leq C\frac{\tau_1^{\gamma}}{\tau_1^{(\gamma-1)}} +C\tau_1=C\tau_1.
\end{equation}
and analogously
\begin{equation}\label{7.13}
\int_{\frac{r+1}{n}-\tau_2}^{\frac{r+1}{n}} L\left(\Del(\psi_3^{\varepsilon}(s)),\alpha^{\psi_3^{\varepsilon}}(s),\omega\right)\leq C\frac{(d_{CC}(C,D))\tau_2^{\gamma}}{\tau_2^{(\gamma-1)}} +C\tau_2= C\tau_2.
\end{equation}

Moreover, since $\psi_2^{\varepsilon}(s)= \xi^{\varepsilon}\left(\frac{r}{n}+\frac{s-\tau_1}{1-n(\tau_1+\tau_2)}\right)$ with $s\in[\frac{r}{n}+\tau_1, \frac{r+1}{n}-\tau_2]$, Lemma~\ref{leftinv} entails
\begin{multline*}
\int_{\frac{r}{n}+\tau_1}^{\frac{r+1}{n}-\tau_2} L\left(\Del(\psi_2^{\varepsilon}(s)),\alpha^{\psi_2^{\varepsilon}}(s),\omega\right) ds=\\
\int_{\frac{r}{n}+\tau_1}^{\frac{r+1}{n}-\tau_2} L\left(\Del\left(\xi^{\varepsilon}\left(\frac{r}{n}+\frac{s-\tau_1}{1-n(\tau_1+\tau_2)}\right)\right),
\frac{1}{1-n(\tau_1+\tau_2)}\alpha^{\xi^{\varepsilon}}\left(\frac{r}{n}+\frac{s-\tau_1}{1-n(\tau_1+\tau_2)}\right),\omega\right) ds.
\end{multline*}
By a change of variable, we get
\begin{multline*}
\int_{\frac{r}{n}+\tau_1}^{\frac{r+1}{n}-\tau_2} L\left(\Del(\psi_2^{\varepsilon}(s)),\alpha^{\psi_2^{\varepsilon}}(s),\omega\right) ds=\\
[1-n(\tau_1+\tau_2)] \int_{\frac{r}{n}}^{\frac{r+1}{n}} L\left(\Del\left(\xi^{\varepsilon}(\bar s)\right),\frac{1}{1-n(\tau_1+\tau_2)}\alpha^{\xi^{\varepsilon}}(\bar s),\omega\right) d\bar s.
\end{multline*}
Therefore, we have
\begin{eqnarray}
&&\int_{\frac{r}{n}+\tau_1}^{\frac{r+1}{n}-\tau_2} L\left(\Del(\psi_2^{\varepsilon}(s)),\alpha^{\psi_2^{\varepsilon}}(s),\omega\right) ds-
\int_{\frac{r}{n}}^{\frac{r+1}{n}} L\left(\Del(\xi^{\varepsilon}(s)),\alpha^{\xi^{\varepsilon}}(s),\omega\right) ds \nn\\
&&= \int_{\frac{r}{n}}^{\frac{r+1}{n}} \left\{L\left(\Del(\xi^{\varepsilon}(s)),\frac{1}{1-n(\tau_1+\tau_2)}\alpha^{\xi^{\varepsilon}}(s),\omega\right)-L\left(\Del(\xi^{\varepsilon}(s)),\alpha^{\xi^{\varepsilon}}(s),\omega\right) \right\}ds \nn \\\label{stimaI1e2}
&&-n(\tau_1+\tau_2) \int_{\frac{r}{n}}^{\frac{r+1}{n}} L\left(\Del(\xi^{\varepsilon}(s)),\frac{1}{1-n(\tau_1+\tau_2)}\alpha^{\xi^{\varepsilon}}(s),\omega\right)ds;
\end{eqnarray}
we denote by $I_1$ and $I_2$ respectively the first and the second integral in the right hand side. Let us estimate $I_1$ and $I_2$ separately.

By our assumption:
\[
\left|L(x,p,\omega)-L(x,q,\omega)\right|\leq C \left| |p|^\lambda - |q|^\lambda\right|\qquad\forall x,p,q,\omega
\]
(THIS ASSUMPTION MUST BE PUT WITH OTHER ONES OR IN THE STATEMENT) we get
\begin{equation}\label{stimaI1}
I_1\leq \int_{\frac{r}{n}}^{\frac{r+1}{n}} C|\alpha^{\xi^\varepsilon}(s)|^\lambda \left|1-\frac{1}{[1-n(\tau_1+\tau_2)]^\lambda}\right|ds
\leq O(\frac{1}{n^{\frac12}})\int_{\frac{r}{n}}^{\frac{r+1}{n}} |\alpha^{\xi^\varepsilon}(s)|^\lambda ds
\end{equation}
where the last inequality is due to Proposition~\ref{prp71}.
On the other hand, by assumption {\bf(L2)}, we have
\begin{equation}\label{stimaI2}
I_2\leq O(\frac{1}{n^{\frac12}}) \int_{\frac{r}{n}}^{\frac{r+1}{n}} |\alpha^{\xi^\varepsilon}(s)|^\lambda ds.
\end{equation}
Replacing inequalities \eqref{stimaI1} and \eqref{stimaI2} in \eqref{stimaI1e2}, we deduce
\begin{multline}\label{stimaI1e2bis}
\int_{\frac{r}{n}+\tau_1}^{\frac{r+1}{n}-\tau_2} L\left(\Del(\psi_2^{\varepsilon}(s)),\alpha^{\psi_2^{\varepsilon}}(s),\omega\right) ds\leq
\int_{\frac{r}{n}}^{\frac{r+1}{n}} L\left(\Del(\xi^{\varepsilon}(s)),\alpha^{\xi^{\varepsilon}}(s),\omega\right) ds \\
+ O(\frac{1}{n^{\frac12}})\int_{\frac{r}{n}}^{\frac{r+1}{n}} |\alpha^{\xi^\varepsilon}(s)|^\lambda ds.
\end{multline}
Replacing \eqref{stimaI1e2bis}, \eqref{7.12} and \eqref{7.13} in \eqref{spezzino},

{\clb FEDE: spezzino e' in una lunga dimostrazione che avete tagliato usando il comando nada....non so come sistemarlo ne' perche' la dimostrazione sia stata tagliata, mi sembra che sia una dimostrazione che avevamo aggiustato insieme a Padova}

 we get
\begin{multline*}
\int_{\frac{r}{n}}^{\frac{r+1}{n}} L\left(\Del(\xi^{\varepsilon}(s)),\alpha^{\xi^{\varepsilon}}(s),\omega\right) ds  \geq
\int_{\frac{r}{n}}^{\frac{r+1}{n}} L\left(\Del(\xi^{\varepsilon,0}(s)),\alpha^{\xi^{\varepsilon,0}}(s),\omega\right) ds
\\ -2C\frac{1}{n^{\frac32}}+ O(\frac{1}{n^{\frac12}})\int_{\frac{r}{n}}^{\frac{r+1}{n}} |\alpha^{\xi^\varepsilon}(s)|^\lambda ds.
\end{multline*}
Summing on $r$, we infer
\begin{multline*}
\sum_{r=0}^{n-1}\int_{\frac{r}{n}}^{\frac{r+1}{n}} L\left(\Del(\xi^{\varepsilon}(s)),\alpha^{\xi^{\varepsilon}}(s),\omega\right) ds\\
\geq 
\sum_{r=0}^{n-1}\int_{\frac{r}{n}}^{\frac{r+1}{n}} L\left(\Del(\xi^{\varepsilon,r}(s)),\alpha^{\xi^{\varepsilon,r}}(s),\omega\right) ds\\\qquad-2C\frac{1}{n^{\frac12}}+ O(\frac{1}{n^{\frac12}})\int_{0}^{1} |\alpha^{\xi^\varepsilon}(s)|^\lambda ds;
\end{multline*}
by Proposition~\ref{prp71} we conclude the proof of claim~\eqref{claim2}.

}

\end{proof}

In the following lemma we prove the upper bound for the limsup of $L^{\varepsilon}$.

\begin{lemma}
\label{LemmaLimsup}
Let us assume that $L(x,q,\omega)$ satisfies assumptions {\bf(L1)-(L4)} and {\bf (L6)}.
Then locally uniformly in $t>0$, $x,y\in \R^N$ and a.s. in $\omega\in \Omega$
\begin{equation}
\label{LIMSUP}
\limsup_{\varepsilon\to 0^+}L^{\varepsilon}(x,y,t,\omega)
\leq t\inf_{\alpha\in \cF^t_{y,x} } \int_{0}^{t} \overline{L}(\alpha(s)) ds ,
\end{equation}
 where $\cF^t_{y,x}$ is the set of all the $m$-valued measurable functions $\alpha:[0,t]\to \R^m$ such that the corresponding horizontal curve $\xi^{\alpha}(s)$ joins $y$ to $x$ in a time $t$ and $\overline L(q)$ is the effective Lagrangian defined by limit \eqref{EffectiveLagrangian}.\end{lemma}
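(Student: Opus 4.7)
The plan is to establish the $\limsup$ bound by constructing, for any $\alpha^*\in\cF^t_{y,x}$ that almost realizes the infimum on the right-hand side, an explicit admissible competitor for $L^\varepsilon(x,y,t,\omega)$ whose cost converges as $\varepsilon\to 0^+$ to $\int_0^t\overline L(\alpha^*(s))\,ds$. The structure is dual to the $\liminf$ argument in Lemma \ref{LemmaLiminfnuovo}: pass from the general horizontal curve $\xi^*$ associated to $\alpha^*$ to a piecewise $\cX$-line approximation, invoke Theorem \ref{Lsegnato} on each small piece (where the constraint $y\in V_x$ is satisfied by construction), and glue the pieces together with CC-geodesics.

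More concretely, fix $\eta>0$ and choose $\alpha^*\in\cF^t_{y,x}$ with $\int_0^t\overline L(\alpha^*(s))\,ds\le \inf+\eta$, denoting by $\xi^*$ the corresponding horizontal curve from $y$ to $x$; by standard density arguments and the continuity of $\overline L$ (Proposition \ref{prop5}), one may assume $\alpha^*$ smooth. For every $\delta>0$ I would apply Lemma \ref{covering} to $\alpha^*$ to obtain good intervals $I_k=[t_k-\ell_k,t_k+\ell_k]$ centered at joint Lebesgue points of $\alpha^*$ together with bad intervals of total Lebesgue measure $<\delta$. On each $I_k$ introduce the $\cX$-line $l_k$ with constant horizontal velocity $\alpha^*(t_k)$ and $l_k(t_k)=\xi^*(t_k)$; by Lemma \ref{Lebesgueapprox} its endpoints $l_k(t_k\pm\ell_k)$ lie within CC-distance $O\bigl(\delta(|I_k|+\|\alpha^*\|_{L^1(I_k)})\bigr)$ of $\xi^*(t_k\pm\ell_k)$. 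Since $l_k(t_k+\ell_k)\in V_{l_k(t_k-\ell_k)}$, Theorem \ref{Lsegnato} (uniformly over the finite index set $k=1,\ldots,N_1$) then provides, for $\varepsilon$ small, curves $\tilde\xi^\varepsilon_k$ connecting these $\cX$-line endpoints in time $|I_k|$ with
$$\int_{I_k}L\bigl(\delta_{1/\varepsilon}(\tilde\xi^\varepsilon_k(s)),\alpha^{\tilde\xi^\varepsilon_k}(s),\omega\bigr)\,ds\le |I_k|\,\overline L(\alpha^*(t_k))+\delta\,|I_k|.$$

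Next I would assemble a competitor $\eta^\varepsilon$ admissible for $L^\varepsilon(x,y,t+\tau,\omega)$ by concatenating the pieces $\tilde\xi^\varepsilon_k$ and bridging with CC-geodesics parametrized by arc-length; these bridges cover the endpoint mismatches between consecutive $l_k$'s (controlled by Lemma \ref{Lebesgueapprox}), the bad intervals, and the initial/terminal gaps to $y$ and $x$. The total CC-distance to cover is bounded by $C\delta\bigl(t+\|\alpha^*\|_{L^1(0,t)}\bigr)+\int_{\text{bad set}}|\alpha^*(s)|\,ds$, which tends to $0$ as $\delta\to 0^+$ by H\"older applied to $\alpha^*\in L^\lambda$ (the bad set having measure $<\delta$). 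The corresponding extra time $\tau=\tau(\delta)\to 0^+$, and by assumption {\bf (L2)} the additional $L^\varepsilon$-cost along the bridges is $O(\tau)$. Summing, and using the continuity of $\overline L$ to recognize $\sum_k|I_k|\overline L(\alpha^*(t_k))$ as a Riemann sum converging to $\int_0^t\overline L(\alpha^*(s))\,ds$, I obtain
$$L^\varepsilon(x,y,t+\tau,\omega)\le \int_0^t\overline L(\alpha^*(s))\,ds+o_{\delta}(1)$$
as $\varepsilon\to 0^+$. Lemma \ref{prop1} absorbs the time mismatch between $t+\tau$ and $t$, and sending in turn $\varepsilon\to 0^+$, $\delta\to 0^+$, and $\eta\to 0^+$ yields \eqref{LIMSUP}.

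The main technical hurdle is the bridging step: one must verify that the total $L^\varepsilon$-cost of the geodesic interpolations vanishes uniformly in $\varepsilon$ as $\delta\to 0^+$. This balances the small Lebesgue measure of the bad set (controlling $\int_{\text{bad}}|\alpha^*|$ via $L^\lambda$-integrability of $\alpha^*$, guaranteed by Proposition \ref{prp71}) against the CC-distance mismatch at good-interval endpoints inherited from the quantitative $\cX$-line approximation of Lemma \ref{Lebesgueapprox}. A secondary subtlety is the time mismatch $\tau>0$ between the constructed competitor and the target interval $[0,t]$, which is absorbed cleanly via the uniform-in-$\varepsilon$ continuity estimate of Lemma \ref{prop1}.
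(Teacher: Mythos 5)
Your proposal is correct in spirit but takes a noticeably heavier route than the paper. You import the full machinery of Lemma \ref{covering} (Lebesgue-point decomposition into good and bad intervals) and Lemma \ref{Lebesgueapprox} (quantitative $\cX$-line approximation) from the $\liminf$ argument, and you place each $\cX$-line tangentially at the midpoint $\xi^*(t_k)$, which forces you to bridge between consecutive pieces with CC-geodesics and to separately control the bad set. The paper's proof is lighter: after reducing to a smooth $\overline\alpha$ (as you also do), it takes a uniform partition into $n$ equal intervals, samples $\overline\alpha$ at midpoints, and --- crucially --- \emph{chains} the $\cX$-lines: $z^0 = y$ and $z^i = \zeta^i(1/n)$, where $\zeta^i$ is the $\cX$-line \emph{starting from $z^{i-1}$} with constant velocity $\overline\alpha^i$. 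This guarantees $z^i \in V_{z^{i-1}}$ and makes the concatenation of minimizers $\eta^\varepsilon_i$ of $L^\varepsilon(z^i,z^{i-1},1/n,\omega)$ a continuous admissible competitor from $y$ to $x' := z^n$ with \emph{no interior bridging at all}; the only correction needed is at the far endpoint, $|x'-x| = O(1/n)$ (via Taylor expansion and Lemma \ref{aprroxCurveLemma}), absorbed by the uniform-in-$\varepsilon$ continuity of $L^\varepsilon$. Since the competitor in a $\limsup$ bound is yours to choose, there is no need for the Lebesgue-point covering at all once $\overline\alpha$ is smooth --- every point is a Lebesgue point with a uniform rate, so good/bad intervals and Lemma \ref{Lebesgueapprox} are redundant here. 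Your approach would work, but requires more bookkeeping (interior bridges, bad-set integrals via H\"older, the time dilation $\tau$ absorbed by Lemma \ref{prop1}); the paper's chaining trick eliminates all of that in one stroke.
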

\begin{proof}
W.l.o.g. we  show the result for $t=1$. 
Let us choose $\overline{\alpha}\in \cF_{y,x}^1$ which realizes the infimum on the right-hand side of \eqref{LIMSUP}. We
 assume  $\overline{\alpha}$ smooth, in fact 
 we can uniformly approximate $\overline{\alpha}$ by smooth horizontal velocities and use the continuity of
 $\overline{L}$.\\ 
We fix a partition $\pi$ of the interval $[0,1]$ in $n$ equal length intervals $\left(\frac{i-1}{n},\frac{i}{n}\right)$ and we set
$$\overline{\alpha}^i=\overline{\alpha}\left(\frac{2i-1}{2n}
\right),\quad i=1,\dots,n.$$ 
We define the step-function
$$
\overline{\alpha}^{\pi}(s):=\overline{\alpha}^i=\textrm{constant},\quad \forall \, s\in \left(\frac{i-1}{n},\frac{i}{n}\right),\;\textrm{for}\; i=1,\dots,n.
$$
By 
Taylor expansion, we know that 
\begin{equation}\label{corvo}
\norma{\overline{\alpha}-\alpha^{\pi}}_{L^1(0,1)}=O\left(\frac{1}{n}\right).
\end{equation}Note that the constants in the Taylor expansion depend on higher derivatives of $\overline \alpha,$ but this is fixed throughout the proof, in particular it does not depend on $\varepsilon.$
We define the following sequence of points in $\R^N$:
$$
z^0=y , \quad z^{i}=\zeta^i(1/n),
$$
where $\zeta^i:\left[0,\frac{1}{n}\right]\to \R^N$ is the unique $\cX$-line starting from $z^{i-1}$ with constant horizontal velocity $\overline\alpha^i$ (i.e. $\dot{\zeta^i}(s)=\sum_{j=1}^m\overline\alpha_j^iX_j(\zeta^i(s))$, for $s\in\left[0,\frac{1}{n}\right]$, with $\zeta^i(0)=z^{i-1}$).
Note that $z^i\in V_{z^{i-1}}$, for all $i=1,\dots,n$.\\
\newline
\begin{figure}[htbp]
\begin{center}
\includegraphics[scale=0.4]{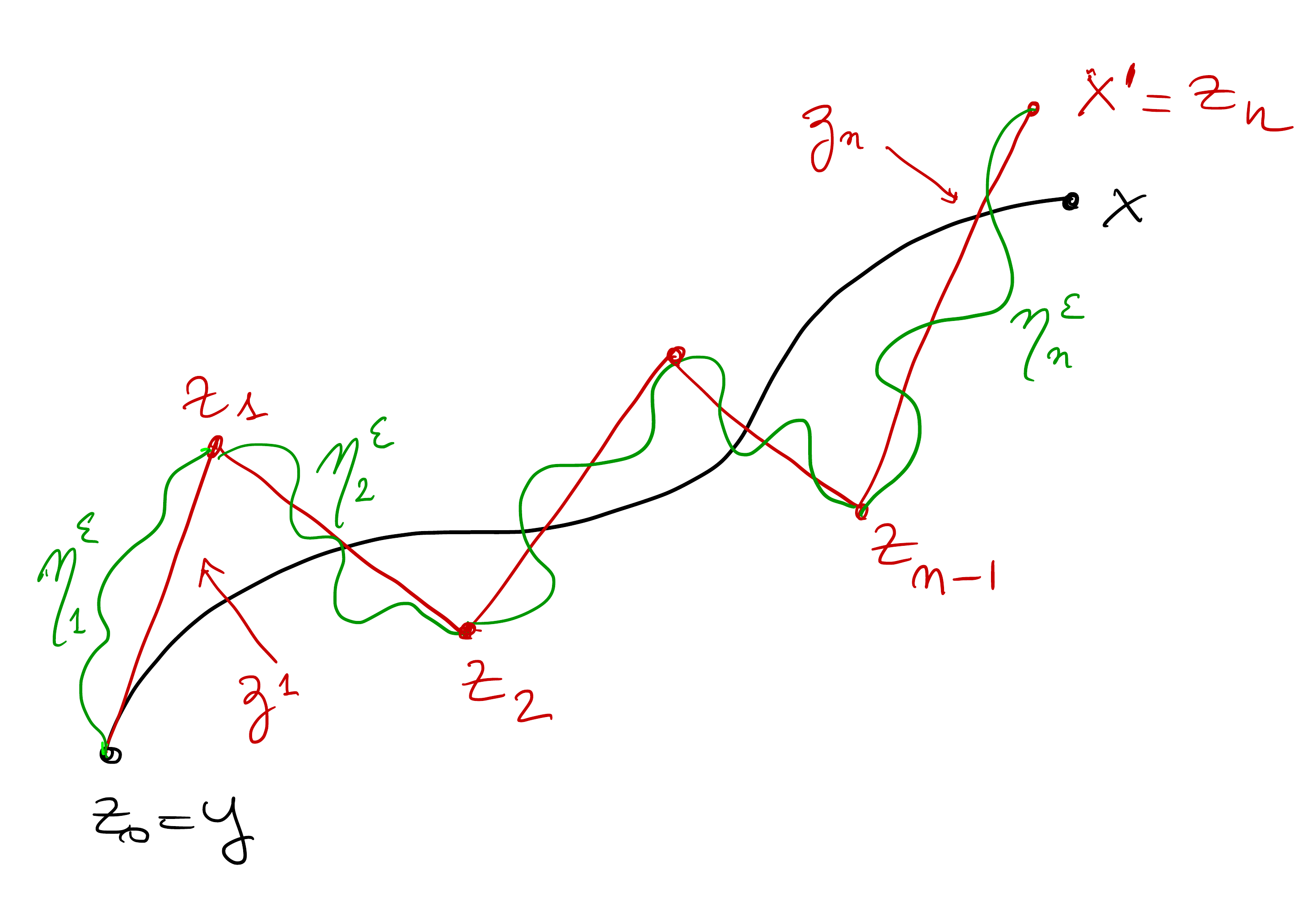}
\end{center}
\caption{this picture illustrates the arguments of the proof of Lemma \ref{LemmaLimsup}.}
\label{Figura-N}
\end{figure}
\newline
Since in general $\cX$-lines do not minimize the integral functional between the two points $z^{i-1}$ and $z^{i}$, we consider the
 curves $\eta_i^\varepsilon(s)$ which are minimisers of $L^{\varepsilon}\left(z^i ,z^{i-1},\frac{1}{n},\omega\right)$.
   We look at the two curves (see Picture \ref{Figura-N}):
\begin{align*}
& \overline{\xi}\in \cA^1_{y,x}\;\textrm{horizontal curve associated to the horizontal velocity}\;\overline{\alpha}(s),\\
 & \overline{\eta}^\varepsilon\in \cA^1_{y,x'}\;\textrm{horizontal curve defined as union of $\eta^i$, for $i=1,\dots,n$}
\end{align*}
where $x':=\zeta^n(1/n)$ satisfies, by \eqref{corvo} and  Lemma \ref{aprroxCurveLemma},
\begin{equation}\label{civetta}
|x-x'|=O\left(\frac{1}{n}\right).
\end{equation}
Note that $\overline{\eta}^\varepsilon$ is an admissible curve, i.e. $\overline{\eta}^\varepsilon\in \cA^1_{y,x'}$, but it
may be not  a minimizer for $L^{\varepsilon}\left(x',y,1,\omega\right)$. Moreover the curve $\overline{\eta}^\varepsilon$ depends on $\overline{\alpha}$, on the partition $\pi$ (i.e. on $n$) and on $\varepsilon$; nevertheless these dependences do not influence our final estimate.\\
From \eqref{civetta} and the continuity of $L^{\varepsilon}$ in $y$, uniformly
w.r.t. $\varepsilon$, denoting by $o(1)$ a function which goes to zero if $n\to +\infty$,
we get
\begin{eqnarray}
\notag
L^{\varepsilon}\left(x,y,1,\omega\right)&=&
L^{\varepsilon}\left(x',y,1,\omega\right)+o(1)\\
\notag
&\leq&
 \int_0^1L\left(\Del(\overline{\eta}^\varepsilon(s)), \alpha^{\overline{\eta}^\varepsilon}(s),\omega\right) ds+
 o(1)\nn\\
 \notag
 &=&
  \sum_{i=1}^n\int_{0}^{1/n}L\left(\Del({\eta}^\varepsilon_i(s)), \alpha^{{\eta}^\varepsilon_i}(s),\omega\right) ds+o(1)\\
   \label{speriamo}
 &=&
  \sum_{i=1}^nL^{\varepsilon}\left(z^{i},z^{i-1},\frac{1}{n},\omega\right)
+o(1)
\end{eqnarray}
where we have used the definition of $\overline{\eta}^\varepsilon$ as union of minimisers for each interval of the partition. 
 Now we first  choose $n$ big enough that the $o(1)$ term in the last line is smaller than the desired error. In the next step  we then choose $\varepsilon$ depending on $n.$\\
Let us assume for the moment the following claim
\begin{equation}\label{convetai}
L^{\varepsilon}\left(z^i, z^{i-1},\frac{1}{n},\omega\right)=\frac{1}{n}\left(\overline L(\overline\alpha^i)+r( \varepsilon)\right),
\end{equation}
where $ r( \varepsilon)$ is a function which goes to zero if  $\varepsilon\to 0$.
Hence,
by \eqref{speriamo} and \eqref{convetai}, 
we get 
\begin{eqnarray*}
L^{\varepsilon}\left(x,y,1,\omega\right)&\leq&
 \sum_{i=1}^n\frac{1}{n}\left(\overline L(\overline\alpha^i)+ r(\varepsilon)\right)+ o(1)\\
 &=&
 \sum_{i=1}^n \frac{1}{n}\overline L(\overline\alpha^i) + o\left(\frac{1}{n}\right)+ o(1)\to 
\int_{0}^{1} \overline{L}(\overline\alpha(s)) ds\quad \textrm{as } n\to +\infty.
\end{eqnarray*}
It remains only to prove \eqref{convetai}.
By  stationarity (see also \eqref{gabbiani}) we have
$$
L^{\varepsilon}\left(z^i,z^{i-1},\frac{1}{n},\omega\right)
=L^{\varepsilon}\left(-z^{i-1}\circ z^{i},0,\frac{1}{n},\tau_{\delta_{1/\varepsilon}(z^{i-1})}(\omega)\right).
$$
Using the relation between the Euclidean distance and the Carnot-Carath\'eodory distance and the fact that $\cX$-lines are horizontal curves, we get
$$
|-z^{i-1}
\circ z^i  |\leq C
d_{CC}(z^{i-1},z^{i})\leq C\int_0^{\frac{1}{n}}
|\overline{\alpha}^i| d s=C_i\frac{1}{n},$$
where $C_i=C|\overline{\alpha}^i|$.\\
Then, up to a constant, we can write
$-z^{i-1}
 \circ z^i=\frac{\overline{z}}{n}$ where $|\overline{z}|\leq 1$ (Euclidean norm in $\R^N$). Setting $\overline{z}^1=\pi_m(\overline{z})$ and using identity \eqref{gabbiani}
\begin{eqnarray*}
L^{\varepsilon}\left(z^i,z^{i-1},\frac{1}{n},\omega\right)
&=&L^{\varepsilon}\left(\frac{\overline z}{n},0,\frac{1}{n},\tau_{\delta_{1/\varepsilon}(z^{i-1})}(\omega)\right)\\
&=&\varepsilon\mu_{(-\overline z^1)}\left(\left[0,\frac{1}{\varepsilon n}\right),\tau_{\delta_{1/\varepsilon}(z^{i-1})}(\omega)\right)\\&=&
\frac{1}{n}(\varepsilon n)\mu_{\overline z^1}\left(\left[0,\frac{1}{\varepsilon n}\right),\tau_{\delta_{1/\varepsilon}(z^{i-1})}(\omega)\right)\\&=&
\frac{1}{n}\big(\overline\mu(\overline z^1)+ r(\varepsilon)\big)=
\frac{1}{n}\big(\overline L(\alpha^i)+r(\varepsilon)\big),
\end{eqnarray*}
where one can use the same argument
as in the proof of Theorem \ref{Lsegnato} to show that
$
\frac{1}{n}(\varepsilon n)\mu_{\overline z^1}([0,\frac{1}{\varepsilon n}),\tau_{\delta_{1/\varepsilon}(z^{i-1})}(\omega))\approx\frac{1}{n}(\varepsilon n)\mu_{\overline z^1}([0,\frac{1}{\varepsilon n}),\omega)
$, as $\varepsilon\to 0^+$.
\end{proof}



Combining Lemmas \ref{LemmaLiminfnuovo} and \ref{LemmaLimsup} we are finally able to prove our main convergence result.
\begin{theorem}\label{Thconvvar}
Let us assume that $L(x,q,\omega)$ satisfies assumptions {\bf(L1)-(L4)} and {\bf (L6)}.
\begin{enumerate}
\item Then
\begin{equation}\label{PensoPositivo}
\lim_{\varepsilon\to 0^+}L^{\varepsilon}(x,y,t,\omega)=t\inf_{\alpha\in \cF^t_{y,x} } \int_{0}^{t} \overline{L}(\alpha(s)) ds ,
\end{equation}
locally uniformly in $t>0$ and $x,y\in \R^N$ and almost surely $\omega\in \Omega$,
where $\cF^t_{y,x}$ is the set of all the $m$-valued measurable functions $\alpha:[0,t]\to \R^m$ such that the corresponding horizontal curve $\xi^{\alpha}(s)$ joins $y$ to $x$ in a time $t$.\\
\item
Given any  $g:\R^N\to \R$ uniformly continuous  and $u^{\varepsilon}(x,t,\omega)$ defined by \eqref{rapprepsilon}, then 
\begin{equation}
\label{LIMITFINALE}
\lim_{\varepsilon\to 0^+}u^{\varepsilon}(x,t,\omega)=
\inf_{y\in \R^N} 
\left[g(y)+
t\inf_{\alpha\in \cF^t_{y,x} } \int_{0}^{t} \overline{L}(\alpha(s)) ds 
\right],
\end{equation}
locally uniformly in $t>0$ and $x\in \R^N$ and almost surely $\omega\in \Omega$.
\end{enumerate}

\end{theorem}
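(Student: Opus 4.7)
The overall plan is to deduce part~1 by combining the liminf bound from Lemma~\ref{LemmaLiminfnuovo} with the limsup bound from Lemma~\ref{LemmaLimsup}, then to upgrade the resulting a.s.\ pointwise convergence to locally uniform convergence via the equi-continuity estimates of Section~\ref{SectionEstimates}; part~2 then follows by passing to the limit in the outer infimum in \eqref{rapprepsilon}, using Proposition~\ref{propu4} to reduce to a compact set of candidate minimizers on which the convergence of part~1 is uniform.

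\textbf{Part 1.} For each fixed triple $(t,x,y)$ with $t>0$, Lemmas~\ref{LemmaLiminfnuovo} and~\ref{LemmaLimsup} together give
\[
\limsup_{\varepsilon\to 0^+}L^\varepsilon(x,y,t,\omega)\le t\inf_{\alpha\in\cF^t_{y,x}}\int_0^t\overline L(\alpha(s))\,ds\le \liminf_{\varepsilon\to 0^+}L^\varepsilon(x,y,t,\omega),
\]
so the limit exists and has the claimed value, a.s.\ in $\omega$ on an event $\Omega_{t,x,y}$ of full measure. To promote this to locally uniform convergence I would intersect over a countable dense set of parameters: set $\Omega_0:=\bigcap_{(t,x,y)\in \Q^+\times(\Q^N)^2}\Omega_{t,x,y}$, which still has probability one. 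On $\Omega_0$, Theorem~\ref{UniformContinuityFunctional} guarantees that the family $\{L^\varepsilon(\cdot,\cdot,\cdot,\omega)\}_{\varepsilon>0}$ is locally equi-continuous in $(t,x,y)$ (away from $t=0$) uniformly in $\varepsilon$; together with continuity of the limit---which inherits the same modulus, and is also continuous by Proposition~\ref{prop5}---this upgrades pointwise convergence on the dense set to locally uniform convergence on every compact subset of $(0,\infty)\times\R^N\times\R^N$.

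\textbf{Part 2.} Under the working assumption that $g$ is bounded (as in the setting of Theorem~\ref{mainTH}), Proposition~\ref{propu4} ensures that for each $(t,x)$ in a bounded set there is a radius $R>0$ independent of $\varepsilon$ such that the infimum in \eqref{rapprepsilon} is attained on the compact set $K_{t,x}:=\{y\in\R^N:d_{CC}(x,y)\le R\}$. On $K_{t,x}$ the convergence $L^\varepsilon\to L^0$ from part~1 is uniform, and $g$ is uniformly continuous, so the standard two-sided argument (near-minimizers for $u^\varepsilon$ lie in $K_{t,x}$ and sub-converge to an admissible $y$; conversely an approximate minimizer for the limit is an admissible competitor for each $u^\varepsilon$) permits the exchange of $\inf$ and $\lim$ and yields \eqref{LIMITFINALE}, locally uniformly in $(t,x)$ and a.s.\ in $\omega$.

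\textbf{Main obstacle.} The substantive work has already been carried out in Lemmas~\ref{LemmaLiminfnuovo} and~\ref{LemmaLimsup}, whose proofs rely on the delicate $\cX$-line approximation via the Chen--Strichartz formula (Lemma~\ref{Lebesgueapprox}) and the Lebesgue-point partition (Lemma~\ref{covering}). What remains here is largely bookkeeping, and the only genuinely technical point is that the $\dP$-null set on which pointwise convergence can fail depends a priori on $(t,x,y)$: the selection of a single common full-measure event $\Omega_0$ and the use of the uniform modulus of continuity from Theorem~\ref{UniformContinuityFunctional} to transfer convergence from a countable dense set to the whole parameter space must be done in tandem. Once local uniformity is in hand, the exchange of limit and infimum in part~2 is routine.
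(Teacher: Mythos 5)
Your proof is correct and follows essentially the paper's own route: the paper states Theorem~\ref{Thconvvar} with no separate argument, presenting it directly as the combination of Lemmas~\ref{LemmaLiminfnuovo} and~\ref{LemmaLimsup}, which is exactly what you do in part~1, and part~2 is the routine passage to the outer infimum via localization and uniform convergence. Two small remarks: the locally uniform character of the convergence (and hence the common full-measure event) is already asserted in the statements of Lemmas~\ref{LemmaLiminfnuovo} and~\ref{LemmaLimsup} themselves --- it is obtained there through Theorem~\ref{UniformContinuityFunctional} and the density/Egoroff argument inside the proof of Theorem~\ref{Lsegnato} --- so your extra countable-intersection step in part~1 repeats work that the cited lemmas already encapsulate; and since the hypothesis on $g$ in Theorem~\ref{Thconvvar} is only uniform continuity (not boundedness), in part~2 one should appeal to Proposition~\ref{prop3}, whose hypothesis $g(x)\ge -C(1+d_{CC}(x,0))$ is automatic from uniform continuity, rather than to Proposition~\ref{propu4}, which requires $g$ bounded; you flag this caveat yourself, so it is not a gap, just a choice of citation.
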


\section{Homogenization for the Hamilton-Jacobi problem}\label{HomogSection}

We want to use Theorem \ref{Thconvvar} to derive the convergence of the viscosity solutions 
of problem \eqref{ApproxPr} to the unique solution of the deterministic problem \eqref{LimitProblem}.
Our strategy is to use the Hopf-Lax variational formula from \cite{BCP}.\\
The key point is the convexity of the effective Lagrangian $\overline L(q)$  defined in \eqref{EffectiveLagrangian}.
In the Euclidean case this is an easy consequence of the Dynamical Programming Principle but in our degenerate case this strategy fails since it is not possible to find three points related to a convex combination satisfying simultaneously the associated constraints.

\begin{proposition}\label{ConvexityTh}
Let us suppose that
 $L(x,q,\omega)$ satisfies assumptions {\bf(L1)-(L4)} and {\bf (L6)}.
Then $\overline L(p)$ defined in \eqref{EffectiveLagrangian} is convex in $\R^m$.
\end{proposition}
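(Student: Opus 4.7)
The strategy is to combine the unconstrained variational limit (Theorem~\ref{Thconvvar}) with the constrained one (Theorem~\ref{Lsegnato}) via a Trotter-type approximation. Fix $p,q\in\R^m$ and $\lambda\in(0,1)$ and set $r:=\lambda p+(1-\lambda)q$. The plan is to build a sequence of horizontal curves $\xi_n:[0,1]\to\R^N$ starting at $0$ whose endpoints $x_n$ converge to $l^\cX_r(1)=(r,y_r^2)\in V_0$ and whose integrated effective Lagrangian cost equals $\lambda\overline L(p)+(1-\lambda)\overline L(q)$ for every $n$; the convexity inequality $\overline L(r)\le\lambda\overline L(p)+(1-\lambda)\overline L(q)$ will then follow by passing to the limit $n\to\infty$ and using the two theorems above.

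Concretely, I would define the step-constant velocity $\alpha_n:[0,1]\to\R^m$ to be equal to $p$ on each sub-interval $[(k-1)/n,(k-1)/n+\lambda/n]$ and to $q$ on $[(k-1)/n+\lambda/n,k/n]$, for $k=1,\dots,n$, and let $\xi_n$ be the associated horizontal curve with $\xi_n(0)=0$ and $x_n:=\xi_n(1)$. Then $\int_0^1\overline L(\alpha_n(s))\,ds=\lambda\overline L(p)+(1-\lambda)\overline L(q)$ for all $n$, and $\alpha_n\in\cF^1_{0,x_n}$ by construction. By Theorem~\ref{Thconvvar}(1), applied for a.s.\ $\omega$,
\begin{equation*}
\lim_{\varepsilon\to0^+}L^{\varepsilon}(x_n,0,1,\omega)=\inf_{\alpha\in\cF^1_{0,x_n}}\int_0^1\overline L(\alpha(s))\,ds\le \lambda\overline L(p)+(1-\lambda)\overline L(q).
\end{equation*}
Denote this limit by $\ell^*(x_n)$. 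The family $\{L^\varepsilon(\cdot,0,1,\omega)\}_{\varepsilon>0}$ is equicontinuous on compact subsets of $\R^N$ by Theorem~\ref{UniformContinuityFunctional}, so $\ell^*$ is continuous. Moreover, by Theorem~\ref{Lsegnato}, since $(r,y_r^2)\in V_0$, one has $\ell^*((r,y_r^2))=\overline L(r)$. Hence, provided $x_n\to(r,y_r^2)$, passing to the limit yields the desired inequality.

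The main obstacle is therefore to prove the endpoint convergence $x_n\to(r,y_r^2)$. This is a Lie-group statement: in exponential coordinates one has
\begin{equation*}
x_n=\bigl(\exp(\lambda P/n)\circ\exp((1-\lambda)Q/n)\bigr)^{\circ n},\qquad (r,y_r^2)=\exp(\lambda P+(1-\lambda)Q),
\end{equation*}
with $P:=\sum_{i=1}^m p_iX_i$ and $Q:=\sum_{i=1}^m q_iX_i$. Since the Carnot group is nilpotent, the Baker--Campbell--Hausdorff expansion terminates after finitely many commutators, so one can write $\exp(\lambda P/n)\circ\exp((1-\lambda)Q/n)=\exp(R/n+\sum_{k\ge 2}n^{-k}Z_k)$ with $R:=\lambda P+(1-\lambda)Q$ and the $Z_k$ fixed commutators. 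Composing $n$ times and applying BCH once more, the accumulated commutator corrections are $O(1/n)$, and the convergence $x_n\to\exp(R)$ follows. In the Heisenberg case this can be verified by direct computation: each sub-cycle contributes $O(1/n^2)$ to the central coordinate, summing to $O(1/n)$ after $n$ cycles. Note that one cannot hope to apply Lemma~\ref{aprroxCurveLemma} here, since $\|\alpha_n-r\|_{L^1}$ does not tend to $0$; the finite-step nilpotent structure is really what makes the endpoint converge despite the persistent $L^1$ oscillation of the velocities. Every other ingredient in the proof is a direct application of results already established in the paper.
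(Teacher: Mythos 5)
Your proof is correct and rests on the same core construction as the paper's --- a piecewise--$\cX$-line whose horizontal velocity alternates between $p$ and $q$, an endpoint-convergence estimate via the Baker--Campbell--Hausdorff formula, and a passage to the limit using the locally uniform continuity of $L^\varepsilon$ --- but it packages the cost estimate differently. The paper does not invoke Theorem~\ref{Thconvvar}; instead it re-derives the upper bound from scratch by gluing minimizers on each sub-interval and applying \eqref{convetai} piecewise, mirroring the argument used in Lemma~\ref{LemmaLimsup}. You instead feed the admissible velocity $\alpha_n$ into the already-proved unconstrained $\Gamma$-limit (Theorem~\ref{Thconvvar}(1)), which collapses the gluing and the $\varepsilon\to 0$ book-keeping into a single line: $\ell^*(x_n)=\inf_{\alpha\in\cF^1_{0,x_n}}\int_0^1\overline L(\alpha)\le\lambda\overline L(p)+(1-\lambda)\overline L(q)$. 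This is a legitimate shortcut since Theorem~\ref{Thconvvar} is proved before Proposition~\ref{ConvexityTh} and nowhere uses convexity of $\overline L$, so there is no circularity. Your BCH argument is also cleaner than the paper's: rather than iterating a pairwise $O(1/n^2)$ estimate over the $n$ sub-cycles (which the paper does somewhat informally), you use $\bigl(\exp(W_n)\bigr)^{\circ n}=\exp(nW_n)$ (valid since $[W_n,W_n]=0$) together with $nW_n=R+O(1/n)$ in the nilpotent algebra, which gives $x_n\to\exp(R)$ directly. Finally, you prove full convexity for arbitrary $\lambda\in(0,1)$, whereas the paper restricts to midpoint convexity and relies on continuity (Proposition~\ref{prop5}) to upgrade; this is a cosmetic difference. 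The observation that Lemma~\ref{aprroxCurveLemma} cannot be applied because $\|\alpha_n-r\|_{L^1}$ does not vanish is correct and is exactly why the nilpotent BCH structure must carry the endpoint convergence.
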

\begin{proof}
For sake of simplicity, we prove the midpoint convexity (which is equivalent to the convexity), 
i.e. we want to prove
\begin{equation}\label{TH}
\overline L\left(\frac{p+q}{2}\right)\leq \frac{1}{2}\overline L(p)+\frac{1}{2}\overline L(q)\qquad \forall p,q\in\R^m.
\end{equation}
By definition of $\overline{L}$  we have that
\begin{eqnarray}\nn
\overline L\left(\frac{p+q}{2}\right)
&=&\lim_{\varepsilon\to 0^+}L^{\varepsilon}\left(y^{\frac{p+q}{2}},0,1,\omega\right)\\
\label{defL} &=&\lim_{\varepsilon\to 0^+}\quad \inf_{_{\xi\in \cA_{0,y^{(p+q)/2}}}} \int_0^1 
L\left(\Del(\xi(s)), \alpha^{\xi}(s),\omega\right)ds
\end{eqnarray}
where $y^{\frac{p+q}{2}}= l^{(p+q)/2}(1)$ and
$l^{(p+q)/2}$ is the $\cX$-line  starting from $0$ with horizontal velocity $\frac{p+q}{2}$.
We define the curve $\xi_n$ as the horizontal curve with $\xi_n(0)=0$ and horizontal velocity
\begin{eqnarray*}
\alpha^{\xi_n}(s)= 
\left\{\begin{array}{cc}p, & if \ s\in \left[\frac{i-1}{2n}, \frac{i}{2n}\right]\; \textrm{and}\,i\ \mbox{even}, \\
q, & if \ s\in \left[\frac{i-1}{2n}, \frac{i}{2n}\right]\; \textrm{and}\, i\ \mbox{odd}
\end{array}\right.
\end{eqnarray*}
for $i=1,\dots,2n$.
We call $x_k= \xi_n(\frac{k}{2n})$, $k=0,\dots,2n$ (see Figure \ref{Figura-D}).
We observe that 
\begin{equation}\label{xi}
x_i\in\ V_{x_{i+1}}, \quad \forall \; i=1,\dots,2n.
\end{equation}
We claim that
\begin{equation}\label{vicini!}
|\xi_n(1)-y^{(p+q)/2}|=O\left(\frac{1}{n}\right),
\end{equation}
where $|\cdot|$ denotes the Euclidean norm.\\
Assume for the moment that claim \eqref{vicini!} is true. 
Then by the uniform continuity of $L^{\varepsilon}$ (see Theorem \ref{UniformContinuityFunctional}) we can deduce
\begin{equation}\label{conconv}
L^{\varepsilon}\big(y^{(p+q)/2},0,1,\omega\big)= L^{\varepsilon}\big( \xi_n(1), 0,1,\omega\big)+
O\left(\frac{1}{n}\right).
\end{equation}

We consider now the curve $\xi_{n}^{\varepsilon}$ which is the union of the curves 
$\xi_{i,n}^{\varepsilon}$ defined in $\left[\frac{i-1}{2n}, \frac{i}{2n}\right]$
that are the minimizers for 
$L^{\varepsilon}\left( x_{i+1},x_i ,\frac{1}{2n},\omega\right)$.
Observe that $\xi_{n}^{\varepsilon}$  is an admissible curve between $0$ and $\xi_n(1)$.
Hence
\begin{align}\nn
&L^{\varepsilon}(\xi_n(1), 0,1,\omega)
\leq \int_0^1 L\left(\Del(\xi_{n}^{\varepsilon}(s)), \alpha^{\xi_{n}^{\varepsilon}}(s),\omega\right)ds\\ \nn
&=\sum_{i\textrm{ odd}}\int_{\frac{i-1}{2n}}^{\frac{i}{2n}} L\left(\Del(\xi_{i,n}^{\varepsilon}(s)), \alpha^{\xi_{i,n}^{\varepsilon}}(s),\omega\right)ds\\ 
&+\sum_{i\textrm{ even}}\int_{\frac{i-1}{2n}}^{\frac{i}{2n}} L\left(\Del(\xi_{i,n}^{\varepsilon}(s)), \alpha^{\xi_{i,n}^{\varepsilon}}(s),\omega\right)ds\nn\\
&=\sum_{i\textrm{ odd}}L^{\varepsilon}\left( x_{i+1},x_i,\frac{1}{2n},\omega\right)+
\sum_{i\textrm{ even}}L^{\varepsilon}\left(x_{i+1},x_i, \frac{1}{2n},\omega\right),\label{conv1}
\end{align}
where the last identity comes from the definition of 
$\xi_{i,n}^{\varepsilon}$.

From \eqref{xi}, we can apply~\eqref{convetai} obtaining 
\begin{equation}
\label{Lconvexity2}
L^{\varepsilon}\left(x_{i+1},x_i, \frac{1}{2n},\omega\right)= 
\left\{\begin{aligned}
&\frac{1}{2n}\big(\overline L(p)+r(\varepsilon)\big),\quad \textrm{if }i\textrm{ is even} ,\\
&\frac{1}{2n}\big(\overline L(q)+r(\varepsilon)\big), \quad \textrm{if }i\textrm{ is odd}
\end{aligned}
\right.
\end{equation}
where $r(\varepsilon)\to 0$ as $\varepsilon\to 0^+$.
 By applying \eqref{defL}, \eqref{conconv},\eqref{conv1}, \eqref{Lconvexity2} we have
\begin{eqnarray*}
&&\overline L\left(\frac{p+q}{2}\right)=
\lim_{\varepsilon\to 0^+}L^{\varepsilon}\left( y^{\frac{p+q}{2}},0,1,\omega\right)=
\lim_{\varepsilon\to 0^+}L^{\varepsilon}\big(\xi_n(1), 0,1,\omega\big)+
O\left(\frac{1}{n}\right)\\ 
&&\leq\lim_{\varepsilon\to 0^+}\left(
\sum_{i\textrm{ odd}}L^{\varepsilon}\left(x_{i+1},x_i, \frac{1}{2n},\omega\right)+
\sum_{i\textrm{ even}}L^{\varepsilon}\left( x_{i+1},x_i,\frac{1}{2n},\omega\right)
\right)+O\left(\frac{1}{n}\right)\\
&&=\lim_{\varepsilon\to 0^+}\left(
\frac{1}{2}\overline L(p)+\frac{1}{2}\overline L(q)+r(\varepsilon)
\right)+
O\left(\frac{1}{n}\right).
\end{eqnarray*}
Passing to the limits, we get \eqref{TH}.

Now it  remains to prove claim \eqref{vicini!}.
First of all we estimate the distance between 
$x_2$ and $l^{(p+q)/2}(1/n)$ (recall that $x_2=\xi_n(2/2n)=\xi_n(1/n)$).
For $a=(a_1,\cdots a_m)\in\R^m$, we set $X_a:= a_1X_1+\cdots a_mX_m$.
Using the exponential coordinates,
 we  can write
$$x_2=\exp\left(\frac{1}{2n}X_p\right)\circ \exp\left(\frac{1}{2n}X_q\right)$$
and 
$$
l^{(p+q)/2}\left(\frac{1}{n}\right)= \exp\left(\frac{1}{n}X_{\frac{p+q}{2}}\right).
$$
The Baker-Campbell-Hausdorff formula (\cite{BLU}) allows to write:
\begin{eqnarray}\label{BCH}
&&x_2=\exp\left(\frac{1}{2n}X_p\right)\circ \exp\left(\frac{1}{2n}X_q\right)=\sum_{k_1,k_2}\frac{1}{k_1!k_2!}\left(\frac{1}{2n}X_p\right)^{k_1}\left(\frac{1}{2n}X_q\right)^{k_2}.
\end{eqnarray}
Moreover
\begin{eqnarray}\label{serie}
l^{(p+q)/2}\left(\frac{1}{n}\right)= \exp\left(\frac{1}{n}X_{\frac{p+q}{2}}\right)=\sum_{k}\frac{1}{k!}\left(\frac{1}{2n}X_{p+q}\right)^{k}.
\end{eqnarray}
Hence considering the first three terms of expansions \eqref{BCH} and \eqref{serie}
we obtain that
$\big|x_2-l^{(p+q)/2}(1/n)\big|= O(1/n^2)$. Iteratively, we get $\big|x_{2i}-l^{(p+q)/2}(i/n)\big|= O(1/n^2)+iO(1/n^2)$; in particular, since $x_{2n}=\xi(1)$ and $y^{\frac{p+q}{2}}= l^{(p+q)/2}(1)$, for $i=n$ we obtain the claim \eqref{vicini!}.
\end{proof}
\begin{figure} [htbp]
\begin{center}
\includegraphics[scale=0.4]{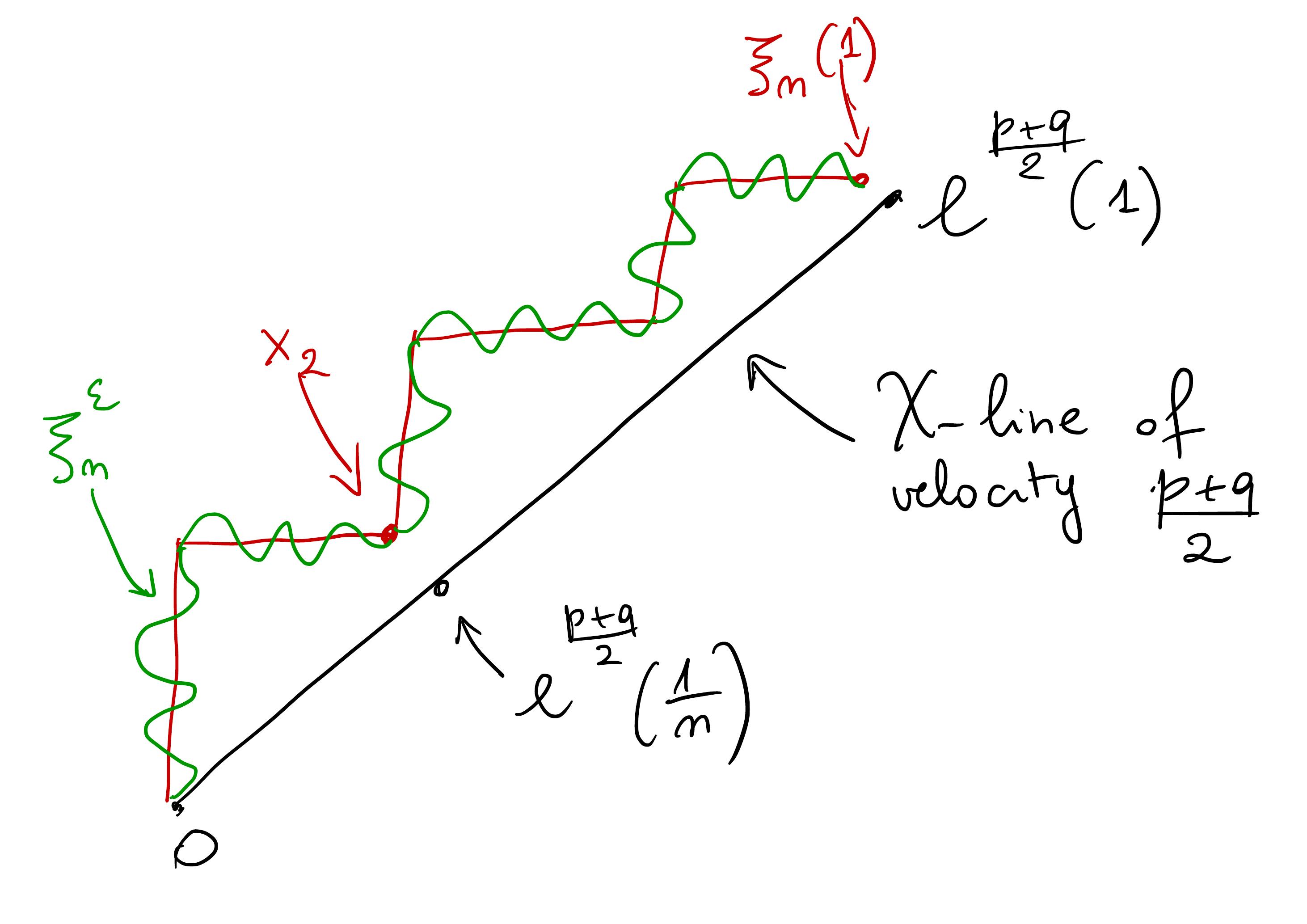}
\end{center}  
\caption{this picture illustrates the arguments of the proof of Proposition \ref{ConvexityTh}.}
  \label{Figura-D}
\end{figure}

%
%
%

We can now prove Theorem \ref{mainTH}, i.e. the homogenization result  for non-coercive  Hamilton-Jacobi problem.
\begin{proof}[Theorem \ref{mainTH}]
Note that by Lemma \ref{PropertiesLagrangian}, assumptions {\bf (H1)-(H4)} implies {\bf (L1)-(L4)}.
By Theorem~\ref{th1}, the function $u^{\varepsilon}(t,x,\omega)$ in the left-hand side of 
\eqref{LIMITFINALE} is the unique viscosity solution of \eqref{ApproxPr}.
We denote by $\bar u$ the right-hand side of \eqref{LIMITFINALE}, i.e.
\begin{equation*}
\bar u(t,x):=\inf_{y\in \R^N} \left[g(y)+ t\inf_{\alpha\in \cF^t_{y,x} } \int_{0}^{t} \overline{L}(\alpha(s)) ds 
\right],
\end{equation*}
We define the effective Hamiltonian $\overline{H}(q): =\overline{L}^*(q)$.
The convexity  and the superlinearity of $\bar L$ (see Theorem \ref{ConvexityTh} and Proposition~\ref{prop5}) imply $\overline{L}(q) =(\overline{L}^*)^{^*}(q)=\overline{H}^*(q)$.
By the Hopf-Lax formula in \cite[Theorem 3.4]{BCP}, the function~ $\bar u (t,x)$ is the unique viscosity solution of \eqref{LimitProblem}.
The convergence easily follows from \eqref{LIMITFINALE}.
\end{proof}
%
   \section{Appendix}
   In this appendix we prove Theorem~\ref{th1} on the well posedness of problem \eqref{ApproxPr}.
   
\begin{proposition}\label{UC}
Under the assumptions of Theorem \ref{th1}, let $u^\varepsilon$ be the function defined in \eqref{rapprepsilon}.
Then $u^{\varepsilon}$ is uniformly continuous in $[0,T]\times \R^N$.
\end{proposition}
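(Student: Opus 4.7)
The plan is to exploit the variational representation \eqref{rapprepsilon} together with the uniform-in-$\varepsilon$ estimates on $L^\varepsilon$ collected in Section~\ref{SectionEstimates}. I will first establish uniform boundedness, then separately prove a modulus of continuity in $t$ and in $x$, and finally combine them.

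\textbf{Step 1 (Boundedness).} Testing the infimum in \eqref{rapprepsilon} with $y=x$ and the constant curve $\xi\equiv x$ gives, via assumption {\bf(L2)}, $u^\varepsilon(t,x,\omega)\le g(x)+C_1 t \le \|g\|_\infty + C_1 T$. The lower bound $u^\varepsilon(t,x,\omega)\ge -\|g\|_\infty -C_1^{-1}T$ follows from Lemma~\ref{lemmapalle} together with the boundedness of $g$.

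\textbf{Step 2 (Regularity in $t$).} The one-sided inequality $u^\varepsilon(t+h,x,\omega)-u^\varepsilon(t,x,\omega)\le C_1 h$ is a direct consequence of Lemma~\ref{lemma41} (just take the infimum in $y$). For the reverse inequality, I will use the dynamic programming identity
\[
u^\varepsilon(t+h,x,\omega)=\inf_{z\in\R^N}\bigl[u^\varepsilon(t,z,\omega)+L^\varepsilon(x,z,h,\omega)\bigr],
\]
which follows from the definitions \eqref{rapprepsilon}--\eqref{Lepsilon} by concatenating admissible horizontal curves. Combined with the lower bound in Lemma~\ref{lemmapalle} for $L^\varepsilon(x,z,h,\omega)$, this forces the approximate minimiser $z^*$ to satisfy $d_{CC}(x,z^*)\le C h^{(\lambda-1)/\lambda}$; the spatial modulus of continuity obtained in Step~3 then closes the estimate. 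Continuity at $t=0$ is handled separately by combining the upper estimate $u^\varepsilon(t,x,\omega)\le g(x)+C_1 t$ with a matching lower estimate obtained by inserting the Lipschitz inequality for $g$ and Lemma~\ref{lemmapalle}, using Proposition~\ref{propu4} to restrict the infimum to a $d_{CC}$-ball around $x$ whose radius shrinks with $t$.

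\textbf{Step 3 (Regularity in $x$).} Given $x_1,x_2\in\R^N$, set $v:=-x_1\circ x_2$ so that $x_1\circ v = x_2$. If $\bar y$ is a (near) minimiser for $u^\varepsilon(t,x_1,\omega)$ (which exists in a bounded $d_{CC}$-ball by Proposition~\ref{propu4}), Lemma~\ref{lemma2} applied to the pair $(\bar y,x_1)$ yields
\[
L^\varepsilon(x_2,\bar y,t+\|v\|_h,\omega)\le L^\varepsilon(x_1,\bar y,t,\omega)+C\|v\|_h,
\]
and, adding $g(\bar y)$ and taking infimum over admissible test points, $u^\varepsilon(t+\|v\|_h,x_2,\omega)\le u^\varepsilon(t,x_1,\omega)+C\|v\|_h$. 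Interchanging $x_1,x_2$ and combining with the time modulus of Step~2 produces a joint modulus of continuity in $(t,x)$ controlled by $|t_1-t_2|+d_{h}(x_1,x_2)$. Finally, on any bounded set, Lemma~\ref{ReelationDistances} and the Euclidean Lipschitz bound on $g$ allow me to transfer this modulus to the Euclidean topology, yielding uniform continuity on $[0,T]\times\R^N$ in the required sense.

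\textbf{Main obstacle.} The delicate point is the interplay between the time and space moduli: the lower bound in the time modulus (Step~2) requires the spatial modulus, while establishing the latter cleanly also involves a shift of $t$ (Lemma~\ref{lemma2}). This chicken-and-egg issue is resolved by first deriving a crude spatial bound (using the a priori $d_{CC}$-compactness of minimisers from Proposition~\ref{propu4} together with Theorem~\ref{UniformContinuityFunctional}, which already gives local uniform continuity of $L^\varepsilon$ in $x,y,t$ away from $t=0$), and then bootstrapping to the global statement. Care is also needed near $t=0$, where Theorem~\ref{UniformContinuityFunctional} degenerates and one must argue directly through the bound $|u^\varepsilon(t,x,\omega)-g(x)|\to 0$ uniformly.
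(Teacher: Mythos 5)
Your overall structure — boundedness, separate moduli in $t$ and in $x$ away from $t=0$, special treatment near $t=0$, use of Proposition~\ref{propu4} to localize minimizers, and Lemma~\ref{lemma2} for the spatial shift — matches the paper's proof. However, there is one place where you introduce a difficulty that the paper's argument sidesteps. For the lower bound in Step~2 you propose a dynamic-programming argument that deliberately brings in the spatial modulus, and then you flag the resulting ``chicken-and-egg'' circularity in your final paragraph. The paper avoids this entirely: for $t,s \ge \delta_1$ it picks a near-minimizer $\bar y$ for $u^\varepsilon(s,x,\omega)$ (which lies in a $d_{CC}$-ball by Proposition~\ref{propu4}), tests the other infimum with the same $\bar y$, and then applies Lemma~\ref{prop1} directly — which already gives \emph{both} directions of the time modulus for $L^\varepsilon$ uniformly in $\varepsilon$, with no reference to the spatial modulus. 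So there is no interdependence to bootstrap around. Similarly, Step~3 in the paper works at the level of $L^\varepsilon$ (add and subtract $L^\varepsilon(y,z,t+\|{-y}\circ x\|_{CC},\omega)$, then use Lemmas~\ref{lemma2} and~\ref{prop1}) rather than manipulating near-minimizers of $u^\varepsilon$ as you do; both are correct, but the $L^\varepsilon$-level argument is cleaner because it can reuse Theorem~\ref{UniformContinuityFunctional} verbatim. Your instinct that Theorem~\ref{UniformContinuityFunctional} resolves the issue is correct — it is just that the resolution is not a ``bootstrap'' but the observation that the time and space moduli of $L^\varepsilon$ are obtained there independently, so there is nothing circular to untangle. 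Your treatment of the $t\to 0^+$ regime (comparing directly against $g$, using the Lipschitz bound and Lemma~\ref{lemmapalle} plus Proposition~\ref{propu4} to shrink the ball) is the same as the paper's Step~1.
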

\begin{proof}
We want to prove that for any $\overline \eta>0$ there exists $\overline \delta>0$ such that 
\begin{equation}\label{UCue}
|u^{\varepsilon}(t,x,\omega)-u^{\varepsilon}(s,y,\omega)|<\overline \eta,\qquad\textrm{if } |t-s|+\|-y\circ x\|_{CC}<\overline\delta.
\end{equation}
{\em Step 1.} We claim that for any $\eta >0$ there exists  $\delta>0$ such that 
\begin{equation}
\label{Step1_Appendix}
|u^{\varepsilon}(t,x,\omega)-u^{\varepsilon}(0,y,\omega)|<\eta + m(d_{CC}(x,y)),\ \forall t\in[0,\delta], \forall x,y\in\R^N.
\end{equation}
Indeed, by definition of $u^{\varepsilon}$  and {\bf (L2)} we have
\begin{equation*}
u^{\varepsilon}(t,x,\omega)-u^{\varepsilon}(0,x,\omega)\leq L^{\varepsilon}(x,x,t,\omega)\leq \int_0^t L(\Del(x),0,\omega)ds\leq C_1t.
\end{equation*}
On the other hand, by the assumption on $g$, for any $\overline \eta>0$ there exists $\overline y\in\R^N$ such that 
\begin{equation*}
u^{\varepsilon}(t,x,\omega)-u^{\varepsilon}(0,x,\omega)
\geq -m(d_{CC}(\overline y,x))+L^{\varepsilon}(x,\overline y,t,\omega)-\overline\eta.
\end{equation*}
Moreover, by {\bf (L2)}, for any $\overline\eta$ there exists $\xi\in \cA^t_{\overline y,x}$ such that
\begin{eqnarray*}
&&L^{\varepsilon}(x,\overline y,t,\omega)
\geq \int_0^t C_1^{-1}\left(|\alpha^{\xi}(s)|^{\lambda}-1\right)ds-\overline\eta \geq -C_1^{-1}t-\overline\eta.
\end{eqnarray*}
By the last two inequalities and by \eqref{stimau4}, we get 
\begin{eqnarray*}
&&u^{\varepsilon}(t,x,\omega)-u^{\varepsilon}(0,x,\omega)\geq\\ 
&&-m\bigg(C_1^{1/\lambda}(\|u^{\varepsilon}\|_{\infty}+\|g\|_{\infty}+C_1^{-1}t+1)^{1/\lambda}t^{1/\lambda-1}\bigg)-C_1^{-1}t-2\overline\eta,
\end{eqnarray*}
where the bound of $\|u^\varepsilon\|_\infty$ is proved in Theorem~\ref{th1}.
Hence, for $t$ sufficiently small and by the assumption on $g$, we get \eqref{Step1_Appendix}.

{\em Step 2.} We claim that, for any $\delta_1 >0$ there exists  $m_{\delta_1}$ such that 
\begin{equation}\label{Step2_Appendix}
|u^{\varepsilon}(t,x,\omega)-u^{\varepsilon}(s,x,\omega)|<m_{\delta_1}(|t-s|),\ \forall t,s \geq \delta_1, \forall x\in\R^N.
\end{equation}
Indeed, for any $\eta>0$ there exists $\overline y\in\R^N$ such that 
\begin{eqnarray*}
&&u^{\varepsilon}(t,x,\omega)-u^{\varepsilon}(s,x,\omega)
\geq L^{\varepsilon}(x,\overline y,t,\omega)-L^{\varepsilon}(x,\overline y,s,\omega)-\eta.
\end{eqnarray*}
The other inequality is similar. Using Lemma \ref{prop1}, we get \eqref{Step2_Appendix}.

{\em Step 3.} We claim that for any $\delta_2 >0$ there exists  $m_{\delta_2}$ such that 
\begin{equation}\label{Step3_Appendix}
|u^{\varepsilon}(t,x,\omega)-u^{\varepsilon}(t,y,\omega)|<m_{\delta_2}(\|y^{-1}\circ x\|_{CC}),\ \forall t\geq \delta_2, \forall x,y\in\R^N.
\end{equation}
Indeed, arguing as in Step~2, it is enough to prove 
$$|L^{\varepsilon}(x,z,t,\omega)-L^{\varepsilon}(y,z,t,\omega)|\leq m_{\delta_2}(\|y^{-1}\circ x\|_{CC}).$$
Actually, adding and subtracting $L^{\varepsilon}(y,z,t+\|-y\circ x\|_{CC},\omega))$, and using Lemma~\ref{lemma41}, we can conclude \eqref{Step3_Appendix}.


{\em Step 4} 
W.l.o.g. assume $s\geq t$ and $\delta$ sufficiently small. For $0\leq t\leq s\leq \delta$, from step 1, we have
\begin{equation*}
|u^{\varepsilon}(t,x,\omega)-u^{\varepsilon}(s,y,\omega)|
\leq 2\eta+m_{\delta}(d_{CC}(x,y)).
\end{equation*}
For $s>\delta$ and $|t-s|<\bar \delta$ (with $\bar \delta<\delta/2$), by step~2 and step~3, we get 
$$|u^{\varepsilon}(t,x,\omega)-u^{\varepsilon}(s,y,\omega)|\leq m_{\bar\delta}(\|-y\circ x\|_{CC}+|t-s|).$$
To conclude it suffices to choose $\bar \delta$ sufficiently small.
\end{proof}

Here we state the following result that will play a crucial role for the proof of Theorem \ref{th1} .
\begin{lemma}
\label{Step2}
For every $R, T>0$ there exists $\mu=\mu(T,R)>0$ such that for every $(t,x)\in (0,T)\times B_R(0)$ there holds
$$
u^{\varepsilon}(t,x,\omega)=\inf\big\{g(\xi(0))+\int_0^t H^*\left(\Del(\xi(s)), \alpha^{\xi}(s),\omega\right)ds\big\}$$
where the infimum is over all the $\alpha\in \cF^t_x$ with $|\alpha^{\xi}|\leq \mu(R,T)$ 
where $ \cF^t_x$ is the set of
all the $m$-valued measurable functions $\alpha$ such that the corresponding horizontal curve is $\xi^{\alpha}(s)$ with $\xi^{\alpha}(t)=x$.
\end{lemma}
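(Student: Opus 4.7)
The plan is to prove the nontrivial inequality by starting from an arbitrary $\eta$-near minimizer of the unrestricted representation~\eqref{rapprepsilon} and producing from it a horizontal curve with horizontal velocity pointwise bounded by $\mu$ whose cost exceeds the original by only a small error. The reverse inequality $u^\varepsilon(t,x,\omega)\le\inf\{\cdots\}$ is immediate since the constraint $|\alpha^\xi|\le\mu$ only shrinks the admissible class, so we focus solely on the direction just described.

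The a priori reductions are already at hand. Proposition~\ref{propu4} localizes $y=\xi(0)$ inside a CC-ball of radius $C_1(R,T,\|g\|_\infty)$ around $x$, and Corollary~\ref{corollary71} applied to $L^\varepsilon(x,y,t,\omega)$ then restricts attention to horizontal curves $\xi$ satisfying both $\|\alpha^\xi\|_{L^\lambda(0,t)}\le C_2(R,T,\|g\|_\infty)$ and $\|\xi\|_\infty\le C_3(R,T,\|g\|_\infty)$, with all constants independent of $\varepsilon$ and $\omega$.

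Fix such a near minimizer $\xi$ and set $E_\mu:=\{s\in[0,t]: |\alpha^\xi(s)|>\mu\}$. By Chebyshev's inequality $|E_\mu|\le C_2^\lambda\mu^{-\lambda}$. Define the truncated velocity $\tilde\alpha:=\alpha^\xi\,\chi_{[0,t]\setminus E_\mu}$ and let $\tilde\xi$ denote the unique horizontal curve with $\tilde\xi(t)=x$ and horizontal velocity $\tilde\alpha$. Then $|\alpha^{\tilde\xi}|\le\mu$ a.e., and H\"older's inequality combined with Lemma~\ref{aprroxCurveLemma} yields
$$\|\xi-\tilde\xi\|_\infty\le C\|\alpha^\xi-\tilde\alpha\|_{L^1(0,t)}\le C|E_\mu|^{1-1/\lambda}\|\alpha^\xi\|_{L^\lambda}\le C\mu^{-(\lambda-1)}.$$

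The cost comparison splits naturally into three pieces. The boundary term satisfies $|g(\tilde\xi(0))-g(\xi(0))|\le C\,\mathrm{Lip}(g)\,\mu^{-(\lambda-1)}$. The $E_\mu$-contribution is controlled via \textbf{(L2)}: since $L(\delta_{1/\varepsilon}(\xi),\alpha^\xi,\omega)\ge C_1^{-1}(\mu^\lambda-1)$ while $L(\delta_{1/\varepsilon}(\tilde\xi),0,\omega)\le C_1$, the corresponding contribution to $J(\tilde\xi)-J(\xi)$ is bounded above by $|E_\mu|\bigl(C_1-C_1^{-1}(\mu^\lambda-1)\bigr)<0$ for $\mu$ large enough depending only on $C_1,C_2$. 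On $[0,t]\setminus E_\mu$ the velocities coincide but the trajectories differ, so by \textbf{(L3)} combined with Lemma~\ref{ReelationDistances} the integrand difference is bounded by $m_\mu\!\left(\varepsilon^{-1}\|-\tilde\xi(s)\circ\xi(s)\|_h\right)$, whose argument is at most $C\varepsilon^{-1}\mu^{-(\lambda-1)/r}$. The main obstacle lies precisely here: for any fixed $\varepsilon$ this last term can be made arbitrarily small by taking $\mu$ sufficiently large, which suffices for the appendix's application at fixed $\varepsilon$; obtaining $\mu$ depending only on $(R,T)$ as stated requires a finer construction---for instance, replacing $\xi$ on each maximal sub-interval $I\subset E_\mu$ by a CC-geodesic of horizontal velocity exactly $\mu$ between its endpoints, together with a compensating slowdown on $[0,t]\setminus E_\mu$ so that the total time horizon remains $t$. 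Assembling the three estimates yields $J(\tilde\xi)\le J(\xi)+\eta+o_\mu(1)$, and letting $\mu\to\infty$ and then $\eta\to 0$ delivers the desired converse inequality.
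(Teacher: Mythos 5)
Your setup---trivial direction, a priori localization via Proposition~\ref{propu4} and Corollary~\ref{corollary71}, Chebyshev on $E_\mu$, $L^1$ control of $\alpha^\xi-\tilde\alpha$---is all sound, and you honestly flag the real problem: on $[0,t]\setminus E_\mu$ the truncated curve $\tilde\xi$ occupies a \emph{different} trajectory from $\xi$, and comparing costs there through {\bf(L3)} forces you to estimate $m_\mu\!\left(\varepsilon^{-1}\|-\tilde\xi(s)\circ\xi(s)\|_h\right)$, whose $\varepsilon^{-1}$ factor makes $\mu$ depend on $\varepsilon$. That dependence contradicts the statement ($\mu=\mu(T,R)$ only), and your sketched fix does not repair it: replacing $\xi$ on each maximal sub-interval of $E_\mu$ by a CC-geodesic run at speed $\mu$ still produces a trajectory that differs from $\xi$ (and now also changes the time budget, which the ``compensating slowdown'' on the complement then has to absorb), so you are again driven back to {\bf(L3)} with the $\varepsilon^{-1}$ rescaling and to an uncontrolled modulus $m_\mu$ whose behaviour as $\mu\to\infty$ {\bf(L3)} does not pin down.

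The missing idea---and the reason the paper can simply cite \cite[Theorem~2.1]{BCP} together with \cite[Theorem~7.4.6]{CS}---is to \emph{reparametrize time rather than truncate velocity}. If $\xi$ is (near-)optimal and $\phi:[0,t]\to[0,t]$ is an increasing bi-Lipschitz reparametrization, then $\eta(\tau):=\xi(\phi(\tau))$ is still horizontal (Lemma~\ref{leftinv}(ii)), has the \emph{same trajectory} as $\xi$, and after the substitution $s=\phi(\tau)$ its cost becomes $\int_0^t L\big(\delta_{1/\varepsilon}(\xi(s)),\theta(s)\alpha^\xi(s),\omega\big)\,\theta(s)^{-1}\,ds$ with $\theta=\phi'\circ\phi^{-1}$ and $\int_0^t\theta^{-1}=t$. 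Crucially, the spatial argument $\delta_{1/\varepsilon}(\xi(s))$ is untouched, so {\bf(L3)} never enters and no $\varepsilon$-dependent error appears; one only uses convexity and the uniform-in-$x$ coercivity of {\bf(L2)}, and the standard Tonelli-type comparison (slow down on $E_\mu$, speed up on the complement so total time stays $t$) shows that $|E_\mu|>0$ for $\mu$ large enough---depending only on $C_1,\lambda,T,R$---would decrease the cost, contradicting near-optimality. This is the step your truncation strategy cannot replicate, and it is what yields the $\varepsilon$-independent bound the lemma asserts.
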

\begin{proof}
The proof is the same as \cite[Theorem 2.1]{BCP}  using  \cite[Theorem 7.4.6]{CS}. 
\end{proof}

Let us now prove Theorem \ref{th1}.

\begin{proof}[Theorem \ref{th1}]


We only sketch the proof;  for the detailed calculations we refer the reader to \cite[Section 10.3.3]{Evans} and to \cite{BCP}.
First of all we prove that $u^{\varepsilon}$ is a solution of \eqref{ApproxPr}.
%
We observe that by Lemma \ref{Step2} $u^\varepsilon$ satisfies the following optimality condition: for any $0\leq h\leq t$ we have 
$$
u^{\varepsilon}(t,x,\omega)=\inf\left\{\int_{t-h}^t L\left(\Del(\xi(s)), \alpha^{\xi}(s),\omega\right)ds+ u^{\varepsilon}(t-h,\xi(t-h),\omega)\right\},$$
where the infimum is over all the $\alpha\in \cF^t_x$ with $|\alpha^{\xi}|\leq \mu(R,T)$.\\

From assumption {\bf(L2)}, Proposition \ref{prp71} and \cite[Lemma 10.3.3]{Evans}, we get
\begin{equation*}\label{bnd}
\|u^{\varepsilon}\|_{\infty}\leq C,\qquad
\textrm{for any compact }K\subset \R^N,\quad 
\|u^{\varepsilon}\|_{W^{1,\infty}([0,T]\times K)}\leq C_K.
\end{equation*}
Following the same arguments of \cite[Theorem 2, Section 10.3.3]{Evans} and \cite{BCP}, we get that $u^\varepsilon$ fulfills $u^{\varepsilon}(0,x)=g(x)$ and it is a viscosity solution of
$$u_t+\mathcal H(x, Du)=0,$$
where 
$\mathcal H(x, Du)=\max_{a\in\R^m, |a|\leq \mu(R,T)}\{p\cdot \sigma(x)a-L(x,a)\}$.\\
Arguing as in \cite[equation (45) and proof of Theorem 3.2]{BCP} we get that, if $u$ is differentiable, then $\mathcal H(x, Du)=H(x,\sigma Du)$.
Applying this property to a smooth test function, we conclude that
 $u^{\varepsilon}$ is a viscosity solution of problem \eqref{ApproxPr}.

The uniqueness of the solution follows from the uniform continuity of $u^\varepsilon$ (see Proposition~\ref{UC}) and applying the result of Biroli \cite[Theorem 4.4]{Bir}.
 \end{proof}
 
Acknowledgments: 
The first author was partially supported by the Leverhulme Trust via grant RPG-2013-261 and by EPSRC via grant EP/M028607/1.
The second author was partially supported by the EPSRC Grant ``Random Perturbations of ultra-parabolic PDEs under rescaling''.
The third and forth authors are members of GNAMPA-INdAM and were partially supported also by the research project of the University of Padova "Mean-Field Games and Nonlinear PDEs" and by the Fondazione CaRiPaRo Project "Nonlinear Partial Differential Equations: Asymptotic Problems and Mean-Field Games".\\
The authors would also like to thank  P.E. Souganidis and R. Monti for the many interesting conversations.


\vskip 0.3truecm


\end{document}